\newtheorem{Theorem}{Theorem}[section]
\newtheorem{Proposition}[Theorem]{Proposition}
\newtheorem{Lemma}[Theorem]{Lemma}
\newtheorem{Corollary}[Theorem]{Corollary}
\theoremstyle{definition}
\newtheorem{Definition}{Definition}[section]
\theoremstyle{remark}
\numberwithin{equation}{section}
\newcommand{\Z}{{\mathbb Z}}
\newcommand{\R}{{\mathbb R}}
\newcommand{\C}{{\mathbb C}}
\newcommand{\N}{{\mathbb N}}
\newcommand{\bbR}{{\mathbb{R}}}
\newcommand{\bbP}{{\mathbb{P}}}
\newcommand{\bbC}{{\mathbb{C}}}
\begin{document}

\title[Absolutely continuous spectrum]{The absolutely continuous spectrum
of Jacobi matrices}

\author{Christian Remling}

\address{Mathematics Department\\
University of Oklahoma\\
Norman, OK 73019}

\email{cremling@math.ou.edu}

\urladdr{www.math.ou.edu/$\sim$cremling}

\date{June 7, 2007; revised August 11, 2010}

\thanks{2010 {\it Mathematics Subject Classification.} Primary 47B36 81Q10 Secondary 30E20}

\keywords{Absolutely continuous spectrum, Jacobi matrix, reflectionless potential}

\thanks{CR's work has been supported by NSF grant DMS 0758594}
\begin{abstract}
I explore some consequences of a groundbreaking result of Brei\-mes\-ser and Pearson
on the absolutely continuous spectrum of one-dimensional Schr\"o\-din\-ger operators.
These include an Oracle Theorem that predicts the potential and
rather general results
on the approach to certain limit potentials. In particular, we prove a Denisov-Rakhmanov
type theorem for the general finite gap case.

The main theme is the following: It is extremely difficult to produce
absolutely continuous spectrum in one space dimension and thus its existence
has strong implications.
\end{abstract}

\maketitle

\section{Introduction and statement of main results}
\subsection{Introduction}
This paper deals with one-dimensional discrete Schr\"odinger operators on $\ell_2$,
\begin{equation}
\label{so}
(Hu)(n)=u(n+1)+u(n-1)+V(n)u(n),
\end{equation}
with some absolutely continuous spectrum. We will also consider Jacobi matrices,
\[
(Ju)(n) = a(n)u(n+1) + a(n-1)u(n-1) + b(n)u(n) ;
\]
these of course include \eqref{so} as the special case $a(n)=1$.

The purpose of this paper is to explore a stunning result of Breimesser and Pearson \cite{BP1,BP2} (which
seems to have gone almost unnoticed). I will give a reformulation in Theorem \ref{TBP} below,
which (I believe) should help to clarify the significance of the brilliant work of Breimesser-Pearson.
In fact, it seems
to me that \cite{BP1,BP2} reveal new fundamental properties of the absolutely continuous spectrum.
The situation is perhaps reminiscent of the
reevaluation of the singular continuous spectrum some ten years ago
(shown to be ubiquitous, contrary to then common popular belief)
\cite{dRMS,Gor,HKnSim,JitSim,Psc,Simsc}.

As is very well known, the absolutely continuous spectrum is that part of the spectrum that has
the best stability properties under small perturbations. Once this is admitted, it turns
out that it is extremely difficult to produce absolutely continuous spectrum in one space dimension
in any other way (other than a small perturbation of one of the few known examples). This is the
main message of this paper.
(I used to believe the exact opposite: absolutely continuous spectrum is what you normally get unless
something special happens, but this now turns out to be a gross misinterpretation.)

In addition to the work of Breimesser and Pearson, a second important source of inspiration
for this paper is provided by Kotani's theory of the absolutely continuous spectrum of ergodic
systems of Schr\"odinger operators \cite{Kotac,Kot,Kotfin,Kot97}. In fact, much of what we will do here
may be viewed as a Kotani like theory, but for individual, non-ergodic operators.
\subsection{Comment on notation}
We will discuss these issues in more detail in a moment, but let me first introduce a notational
convention that will be used throughout this paper.
Everything we do here
works in the general (Jacobi) setting, but the need to deal with two sequences of coefficients
$a(n)$, $b(n)$ often makes the notation awkward. So it might seem wise to only deal with the
Schr\"odinger case, but this is not an ideal solution either
because sometimes the greater generality of the
Jacobi setting is essential. I have decided on a perhaps somewhat unusual remedy against
this predicament: Since usually the extension to the Jacobi case is obvious, I will simply
work in the Schr\"odinger operator setting most of the time.
Occasionally, I will have to switch to the Jacobi
case, though. For example,
Sections 5, 6, 7 don't make much sense without this generality, and in Sect.~3, it's
not totally clear how to incorporate the $a(n)$'s. However, I will usually
quickly switch back to the Schr\"odinger notation when feasible. I hope that this
leads to a more easily readable presentation without confusing the reader too much.

If necessary, I will also
use all previous results as if they had been proved for Jacobi operators.
In other words, everything in this paper is (at least implicitly) asserted for the general Jacobi case.

So there are two extreme ways of reading this paper:\\
(1) \textbf{Schr\"odinger reader:} specialize to $a(n)=1$ and identify $b(n)=V(n)$ whenever you see
coefficients $a(n)$, $b(n)$;\\
(2) \textbf{Jacobi reader:} replace $V(n)$ with $(a(n),b(n))$ throughout and make other
adjustments as necessary (frequently, no such additional adjustments are necessary).
Somewhat more detailed instructions for the Jacobi reader will be given as we go.
\subsection{The Oracle Theorem}
The basic result of this paper is Theorem \ref{TBP} below, but let me begin
the discussion by mentioning two
consequences that are particularly accessible:
\begin{Theorem}
\label{T1.1}
Suppose that the (half line) potential $V(n)$ takes only
finitely many values and $\sigma_{ac}\not=\emptyset$.
Then $V$ is eventually periodic: There exist $n_0, p\in\N$ so that
\[
V(n+p)=V(n) \quad \textrm{for all } n\ge n_0 .
\]
\end{Theorem}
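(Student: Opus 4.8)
The plan is to combine the main tool of the paper—the Breimesser--Pearson theorem in its reformulated version (Theorem \ref{TBP})—with a compactness argument on the space of finite-valued potentials. The key notion should be that of $\omega$-limit points of the potential under the shift: let $S$ denote the left shift, $(SV)(n) = V(n+1)$, and say that a (whole-line) potential $W$ is an $\omega$-limit point of $V$ if there is a sequence $n_k \to \infty$ with $S^{n_k} V \to W$ pointwise. Since $V$ takes only finitely many values, the orbit closure $\{S^n V : n \ge 1\}^-$ is compact in the product topology, so $\omega$-limit points exist, and moreover any such $W$ again takes values in the same finite set. The heart of the matter is that Theorem \ref{TBP} should say, roughly, that the $\omega$-limit set is contained in the set of reflectionless potentials on $\Sigma_{ac}$, the essential support of the absolutely continuous spectrum; this is precisely the "it is hard to produce a.c.\ spectrum" philosophy advertised in the introduction.

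First I would invoke Theorem \ref{TBP} to conclude that every $\omega$-limit point $W$ of $V$ is reflectionless on $\Sigma_{ac}$, where $|\Sigma_{ac}| > 0$ because $\sigma_{ac} \neq \emptyset$ (a.c.\ spectrum has positive Lebesgue measure by definition). Next I would bring in the constraint that $W$ is finite-valued. A whole-line Schr\"odinger operator that is reflectionless on a positive-measure set has very rigid structure; in particular its spectrum has a description in terms of the set $\Sigma_{ac}$, and one can push this to show that a reflectionless finite-valued potential must in fact be periodic. One clean way to see this: if $W$ is reflectionless on a set of positive measure, then the diagonal Green function $g(z,n) = G(z;n,n)$ has a meromorphic pseudo-continuation / the Weyl $m$-functions $m_\pm$ satisfy $m_+ = -\overline{m_-}$ on $\Sigma_{ac}$, and finiteness of the potential forces a trace-class type rigidity that collapses the orbit closure of $W$ itself to a finite set—whence $W$ is periodic. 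I would then argue that all $\omega$-limit points share a common period: because there are only finitely many periodic finite-valued potentials of bounded period, and the orbit closure is connected-in-the-appropriate-sense/sequentially compact, the $\omega$-limit set is finite, and a standard argument gives a single period $p$ for all of it.

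Finally, from "$V$ is finite-valued and its $\omega$-limit set consists of periodic potentials of common period $p$" I would deduce that $V$ is eventually periodic. The mechanism: consider the finite sequence of $p$-blocks $(V(np+1),\dots,V(np+p))$ for $n = 0,1,2,\dots$; these take values in a finite set, and the hypothesis on the $\omega$-limit set says that along every subsequence the blocks converge, which for a finite set means the block sequence is eventually constant. That constant block is exactly the statement $V(n+p) = V(n)$ for $n \ge n_0$.

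The main obstacle I expect is the rigidity step: showing that a \emph{finite-valued} potential that is reflectionless on a positive-measure set must be periodic. Reflectionless-ness alone (say, on a Cantor set of positive measure) is compatible with almost periodic but non-periodic potentials in general, so the finiteness of the value set has to be used essentially—presumably through an argument that a reflectionless potential with finitely many values has only finitely many values appearing in its own orbit closure, perhaps via the theory of the associated hyperelliptic Riemann surface / the fact that the resolvent set must then be a finite gap set, or via a direct combinatorial argument on the Weyl function. Getting this step to work cleanly, and verifying that Theorem \ref{TBP} is stated in a form strong enough to yield "reflectionless on $\Sigma_{ac}$" for \emph{all} $\omega$-limit points (not just a.e.\ one in some measure), is where the real work lies; the compactness and eventual-periodicity bookkeeping around it is routine.
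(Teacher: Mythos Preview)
Your outline has the right ingredients but two concrete gaps. First, the final step contains a logical error: knowing that every $\omega$-limit point is periodic of period $p$ does \emph{not} imply that the $p$-block sequence $(V(np+1),\ldots,V(np+p))$ is eventually constant. A finite-valued $V$ could spend increasingly long stretches agreeing with one periodic $W$, then with a different periodic $W'$, then with $W$ again, and so on; every subsequence of blocks would have a convergent sub-subsequence, yet the block sequence itself need not stabilize. You would need $\omega(V)$ to be a \emph{single} periodic orbit, and that requires further argument. Second, the rigidity step you yourself flag as the main obstacle---finite-valued plus reflectionless on a positive-measure set implies periodic---is indeed the crux, and the mechanisms you suggest (trace-class rigidity, hyperelliptic surfaces, finite-gap structure) do not apply directly: a potential reflectionless on an arbitrary positive-measure set need not be finite-gap, so there is no Riemann surface to invoke.

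The paper closes both gaps at once by inserting the Oracle Theorem (Theorem~\ref{Tor}) as an intermediate result. From Theorem~\ref{TBP} together with the uniform continuity of $W_-\mapsto W_+$ on $\mathcal R^C(A)$ (Proposition~\ref{P4.1}(e)) one obtains a function $\Delta$ with $|V(n+1)-\Delta(V(n-L),\ldots,V(n))|<\epsilon$ for all large $n$. For finite-valued $V$ one takes $\epsilon$ smaller than the minimal gap between values, so the prediction is exact; pigeonhole on the finitely many $(L+1)$-blocks then gives eventual periodicity in one line. The determinism you are reaching for is exactly Proposition~\ref{P4.1}(c,e); the point is to package it as an oracle acting on $V$ itself rather than as a structure theorem for each $\omega$-limit, which sidesteps any need to classify $\omega(V)$.
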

For ergodic potentials, this is a well known Theorem of Kotani \cite{Kotfin}. That it holds for arbitrary
operators came as a mild surprise, at least to me.
(Recall also that by our general convention, the same statement
holds for Jacobi operators: if $\sigma_{ac}(J)\not=\emptyset$,
then eventually $a(n+p)=a(n)$, $b(n+p)=b(n)$
for some $p\in\N$.)

Theorem \ref{T1.1} is a consequence of the following more general result,
which says that there are universal oracles that
will predict future values of potentials with some absolutely continuous spectrum with any desired
accuracy, based on (partial) information on past values.
\begin{Theorem}[The Oracle Theorem]
\label{Tor}
Let $C>0$, $\epsilon>0$, and let $A\subset\R$ be a Borel set of positive Lebesgue measure.
Then there exist $L\in\N$ and a smooth function
\[
\Delta: [-C,C]^{L+1} \to [-C, C]
\]
(the {\em oracle}), such that the following holds: For any (half line) potential $V$ with
$\|V\|_{\infty} \le C$ and $\Sigma_{ac}(V) \supset A$, there exists $n_0\in \N$ so that
for all $n\ge n_0$,
\[
\left| V(n+1) - \Delta\left( V(n-L), V(n-L+1), \ldots , V(n)\right) \right|
< \epsilon .
\]
\end{Theorem}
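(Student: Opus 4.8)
The plan is to deduce the theorem from the Breimesser--Pearson Theorem (Theorem~\ref{TBP}), whose role here is to guarantee that every pointwise limit of shifts $V(\cdot+n)$ of an admissible potential is reflectionless on $A$. Write $\mathcal R_A$ for the set of $W\in[-C,C]^{\Z}$ whose whole-line operator is reflectionless on $A$; this is a compact subset of $[-C,C]^{\Z}$ in the topology of pointwise convergence. The first step is a reduction: it suffices to produce $L\in\N$ and a smooth $\Delta:[-C,C]^{L+1}\to[-C,C]$ with
\[
\bigl|\,W(1)-\Delta\bigl(W(-L),\ldots,W(0)\bigr)\,\bigr|<\epsilon/2\qquad\text{for all }W\in\mathcal R_A .
\]
Indeed, given such $L,\Delta$, suppose the conclusion failed for some $V$ with $\|V\|_\infty\le C$ and $\Sigma_{ac}(V)\supseteq A$; then there would be $n_j\to\infty$ with $|V(n_j+1)-\Delta(V(n_j-L),\ldots,V(n_j))|\ge\epsilon$. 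Passing to a subsequence, $V(\cdot+n_j)$ converges pointwise on $\Z$ to some $W\in[-C,C]^{\Z}$, which lies in $\mathcal R_A$ by Theorem~\ref{TBP}. Letting $j\to\infty$ and using continuity of $\Delta$ gives $|W(1)-\Delta(W(-L),\ldots,W(0))|\ge\epsilon$, a contradiction. (A Jacobi reader makes the usual substitutions, with $\Delta$ predicting $(a(n+1),b(n+1))$.)

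The heart of the argument is a unique continuation statement for reflectionless potentials: if $W_1,W_2\in\mathcal R_A$ agree on $(-\infty,0]$, then $W_1(1)=W_2(1)$ (in fact $W_1=W_2$). I would prove this via the half-line $m$-functions. Decoupling at the bond between sites $0$ and $1$, let $m_i^-$ be the $m$-function of the restriction of the $W_i$-operator to $(-\infty,0]$ and $m_i^+$ that of the restriction to $[1,\infty)$. Since $W_1=W_2$ on $(-\infty,0]$, we have $m_1^-=m_2^-$. Reflectionlessness on $A$ means (in the usual normalization) $m_i^+(\lambda+i0)=-\overline{m_i^-(\lambda+i0)}$ for a.e.\ $\lambda\in A$, whence $m_1^+=m_2^+$ a.e.\ on $A$. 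But $m_1^+-m_2^+$, being a difference of Herglotz functions, lies in the Nevanlinna class, and a nonzero function in that class cannot vanish on a set of positive Lebesgue measure; hence $m_1^+\equiv m_2^+$ on $\C^+$. Since $m^+$ determines $W(1)$ through its asymptotic expansion as $z\to\infty$ (and, iterating the continued fraction expansion, determines $W(n)$ for all $n\ge1$), we get $W_1(1)=W_2(1)$. I expect this passage -- from the fact that $m^+$ is known a.e.\ on a positive-measure subset of $\R$ to the conclusion that $m^+$, hence the potential, is known everywhere -- to be the one genuinely non-soft ingredient; it rests on the boundary uniqueness theorem for the Nevanlinna class.

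The rest is routine. A compactness argument upgrades the previous paragraph to a quantitative, finite-window statement: if no $L$ worked, then for some $\delta_0>0$ and every $L$ there would be $W_1^{(L)},W_2^{(L)}\in\mathcal R_A$ agreeing on $\{-L,\ldots,0\}$ with $|W_1^{(L)}(1)-W_2^{(L)}(1)|\ge\delta_0$; extracting pointwise limits in the compact set $\mathcal R_A$ yields $W_1,W_2\in\mathcal R_A$ that agree on $(-\infty,0]$ but have $|W_1(1)-W_2(1)|\ge\delta_0$, contradicting unique continuation. So fix $L$ (for accuracy, say, $\epsilon/8$), let $G_L\subseteq[-C,C]^{L+1}$ be the compact image of $\mathcal R_A$ under $W\mapsto(W(-L),\ldots,W(0))$, and note that on $G_L$ the value $W(1)$ is, up to an error $<\epsilon/8$, a function of $(W(-L),\ldots,W(0))$ with small oscillation (again by compactness). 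Extending this function to the whole cube by nearest-point-in-$G_L$ values and mollifying at a sufficiently small scale produces the required smooth $\Delta:[-C,C]^{L+1}\to[-C,C]$; together with the reduction of the first paragraph, this completes the proof.
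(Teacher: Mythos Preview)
Your proof is correct and follows essentially the same route as the paper's: Theorem~\ref{TBP} forces $\omega(V)\subset\mathcal R^C(A)$, unique continuation for reflectionless potentials (the paper's Proposition~\ref{P4.1}(c)) plus compactness of $\mathcal R^C(A)$ (Proposition~\ref{P4.1}(d)) yield the uniform-continuity statement (Proposition~\ref{P4.1}(e)), and a mollification handles smoothness. The only organizational difference is that you package the reduction as a clean contradiction/subsequence argument, whereas the paper builds the oracle explicitly via a finite cover by balls; your version is arguably tidier, though you should be aware that the compactness of $\mathcal R_A$ you assert is not entirely soft and is where the paper invokes convergence in value distribution (Lemma~\ref{L3.1} and Theorem~\ref{T2.1}).
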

Here, we use the symbol $\Sigma_{ac}$ to denote an essential support of the absolutely continuous
part of the spectral measure. In other words, the measures $d\rho_{ac}$ and $\chi_{\Sigma_{ac}}\, dt$ have
the same null sets. This condition determines $\Sigma_{ac}$ up to sets of (Lebesgue) measure zero.
The absolutely continuous spectrum, $\sigma_{ac}$, is the essential closure of $\Sigma_{ac}$.

\textit{Jacobi reader: }Interpret the assumption that $\|V\|_{\infty}\le C$ as $V\in\mathcal V_+^C$;
this will be explained in more detail below. The oracle will now predict $(a(n+1),b(n+1))$, as a function
of $(a(j),b(j))$ for $n-L\le j\le n$.

Note that only $n_0$ depends on $V$ (this, of course, is inevitable, because we can always modify
$V$ on a finite set without affecting the absolutely continuous spectrum); the oracle $\Delta$ itself
is universal and works for any potential $V$ satisfying the assumptions.

Can we also predict $V$ if we just know that $V$ has \textit{some} absolutely continuous spectrum?
Clearly, the answer to this is no because any periodic $V$
has non-empty absolutely continuous spectrum, and it is certainly not possible to make any predictions
about the next value of an arbitrary periodic potential, based on a finite number of previous values
(the period could simply be larger than that number). Therefore, the oracle $\Delta$ must depend on
the set $A$, which serves as a lower bound for $\Sigma_{ac}$.

Theorem \ref{T5.2} below will further clarify this issue. It will show how exactly things can go
wrong if we don't have some a priori information on $\Sigma_{ac}$.

Theorem \ref{Tor} will be proved in Sect.\ 4.
Again, we can view the Oracle Theorem as a general version of a famous
result of Kotani \cite{Kotac,Kot} on ergodic operators
(\textit{ergodic }potentials with some absolutely continuous spectrum
are deterministic).

We can confirm right away that
Theorem \ref{T1.1} indeed is an immediate consequence
of the Oracle Theorem.
\begin{proof}[Proof of Theorem \ref{T1.1}]
By choosing $\epsilon>0$ small enough, we can use an oracle to (eventually) predict $V(n)$ exactly,
given the previous $L+1$ values of $V$. But there are only finitely many different blocks
of size $L+1$, so after a while, things must start repeating themselves.
\end{proof}

Please see also Corollary \ref{C1.3} below for another illustration of the Oracle Theorem in action.
\subsection{The basic result}
Let me now present the basic theorem of this paper: the reformulation of
Theorem 1 from \cite{BP1}. This result, in its original version (but for
discrete rather than continuous operators), will be formulated as Theorem \ref{TBPorig}
below; the proof will be given in Appendix A.

We consider the space $\mathcal V^C$ of bounded (whole line) potentials $|V(n)|\le C$.
This becomes a compact
topological space if endowed with the product topology. In fact, the space is metrizable;
one possible choice for the metric is
\[
d(V,W) = \sum_{n=-\infty}^{\infty} 2^{-|n|} |V(n)-W(n)| .
\]
More generally, we will frequently have occasion to consider
half line and whole line potentials simultaneously,
and thus we extend the definition of $d$ as follows: If $V:A\to [-C,C]$, $W:B\to [-C,C]$,
where $A,B\subset\Z$, then we simply put
\[
d(V,W) = \sum_{n\in A\cap B} 2^{-|n|} |V(n)-W(n)| .
\]
The typical case is: one set equals $\Z$, the other is a half line. We will
also use the modified notation $\mathcal V_{\pm}^C$ to refer to half line potentials,
defined on $\Z_+$ and $\Z_-$, respectively, where
\begin{align*}
\Z_+ & = \{ 1, 2, 3, \ldots \}, \\
\Z_- & = \{ \ldots, -2, -1, 0 \} .
\end{align*}

Recall also that by the Simon-Spencer Theorem \cite{SimSp} (see also \cite{Dom}
and \cite[Theorem 4.1]{LS})
a (half line) potential
$V$ is automatically bounded if $\sigma_{ac}\not=\emptyset$.

\textit{Note to the Jacobi reader: }For $C>0$,
define $\mathcal V^C$ as the space of sequences $(a(n),b(n))$
satisfying $(C+1)^{-1}\le a(n) \le C+1$, $|b(n)|\le C$.
(Again, if we assume that $a(n) \le C_1$ and if $\sigma_{ac}\not=\emptyset$,
the other inequalities follow automatically by the Simon-Spencer argument.)
In the definition of $d$, replace
$|V(n)-W(n)|$ by $|a(n)-a'(n)|+|b(n)-b'(n)|$ (say). We will frequently refer to a $V\in\mathcal V^C$ as a
\textit{bounded potential; }the Jacobi reader will have to interpret this term as explained above.
By the same token, we will often use the term \textit{potential} for what in the Jacobi case
would really be a sequence of coefficients $(a(n),b(n))$.

The absolutely continuous spectrum as well as the essential spectrum are independent of the
behavior of the potential on any finite set, so it seems natural to study the $\omega$ limit
set of a given bounded (half line) potential $V$ under the shift map $S$
when one is interested in these parts of the
spectrum. Thus we define
\[
\omega(V) = \{ W \in \mathcal V^C : \textrm{There exists a sequence }n_j\to\infty
\textrm{ such that } d(S^{n_j}V, W) \to 0 \} ;
\]
as already explained, $S$ denotes the shift map, that is,
\[
(S^kV)(n)=V(n+k) .
\]
Note that here indeed $V$ is a half line potential, while the limits $W$ are whole line potentials.
These $\omega$ limit sets will play a very important role in this paper; they have also been
studied by Last and Simon in \cite{LS,LS2} (where they are called right limits).

We record some well known basic properties.
\begin{Proposition}
\label{P1.1}
$\omega(V)\subset\mathcal V^C$ is compact, non-empty, and $S$ is a homeomorphism on $\omega(V)$.
Moreover,
\[
d(S^n V, \omega(V) ) \to 0 \quad\quad (n\to\infty) .
\]
\end{Proposition}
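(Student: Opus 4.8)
The plan is to treat this as the standard structure theory of $\omega$-limit sets of a homeomorphism of a compact metric space, the only extra care being the bookkeeping forced by the fact that $S^nV$ is defined on the truncated half line $\{1-n,2-n,\ldots\}$ while the elements of $\omega(V)$ are whole line potentials. As recorded in the text, $(\mathcal V^C,d)$ is a compact metric space, and the shift $S$, $(SW)(n)=W(n+1)$, together with its inverse $(S^{-1}W)(n)=W(n-1)$, is continuous on $\mathcal V^C$ in the product topology, so $S$ is a homeomorphism of $\mathcal V^C$ onto itself. I will also use the elementary observation that, for $n\to\infty$, $d(S^nV,W)\to 0$ is equivalent to pointwise convergence $(S^nV)(m)\to W(m)$ for each fixed $m\in\Z$: once $n$ is large the index $m$ lies in the domain of $S^nV$, and the tail of the defining series is uniformly small.

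First I would dispose of non-emptiness together with the one compactness input that drives everything else: given any sequence $n_j\to\infty$, a diagonalization over $m\in\Z$ extracts a subsequence $n_{j_k}$ along which $(S^{n_{j_k}}V)(m)$ converges for every $m$; the limit $W$ satisfies $|W(m)|\le C$, so $W\in\mathcal V^C$, and $d(S^{n_{j_k}}V,W)\to 0$, hence $W\in\omega(V)$. In particular $\omega(V)\neq\emptyset$. Closedness of $\omega(V)$ is the usual two-step approximation: if $W_k\to W$ with $W_k\in\omega(V)$, pick $n_k\ge k$ with $d(S^{n_k}V,W_k)<1/k$ (possible since each $W_k$ is a limit along an unbounded set of shifts) and conclude $d(S^{n_k}V,W)\le d(S^{n_k}V,W_k)+d(W_k,W)\to 0$ with $n_k\to\infty$; thus $W\in\omega(V)$. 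A closed subset of the compact space $\mathcal V^C$ is compact.

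For the dynamics: if $S^{n_j}V\to W$, then continuity of $S$ gives $S^{n_j\pm1}V\to S^{\pm1}W$ (the case $-1$ for $j$ large enough that $n_j\ge 2$), and since $n_j\pm1\to\infty$ this shows $S^{\pm1}W\in\omega(V)$; hence $S(\omega(V))=\omega(V)$. Because $S$ is injective on $\mathcal V^C$, its restriction to $\omega(V)$ is a continuous bijection of the compact metric space $\omega(V)$ onto itself whose inverse is the restriction of the continuous map $S^{-1}$, so $S$ is a homeomorphism of $\omega(V)$. Finally, for $d(S^nV,\omega(V)):=\inf\{d(S^nV,W):W\in\omega(V)\}\to 0$: were this false, there would be $\delta>0$ and $n_j\to\infty$ with $d(S^{n_j}V,\omega(V))\ge\delta$, yet the diagonalization of the first step produces a subsequence with $S^{n_{j_k}}V\to W\in\omega(V)$, contradicting $d(S^{n_{j_k}}V,W)\ge\delta$.

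I expect no serious obstacle here; this is point-set topology of $\omega$-limit sets. The one place that deserves genuine attention is the claim $S(\omega(V))=\omega(V)$ — i.e.\ the equality, not merely $S(\omega(V))\subseteq\omega(V)$ — since it is exactly this surjectivity, proved via $S^{-1}$ acting on the two-sided sequences in $\omega(V)$, that makes $S$ a homeomorphism rather than merely a continuous self-map; the half line versus whole line distinction in the metric $d$ is the other thing to keep an eye on, but it contributes only harmless tail terms that vanish as $n\to\infty$.
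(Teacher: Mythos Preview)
Your proof is correct and follows essentially the same standard $\omega$-limit-set argument as the paper. The only presentational difference is that the paper first extends $V$ to a whole line potential by setting $V(n)=0$ for $n\le 0$ and then uses the representation $\omega(V)=\bigcap_{m\ge 1}\overline{\{S^nV:n\ge m\}}$ to obtain non-emptiness and compactness in one stroke, whereas you keep $V$ on the half line, handle the domain mismatch via the tail of the metric, and prove non-emptiness and closedness separately; the final contradiction argument for $d(S^nV,\omega(V))\to 0$ is identical.
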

The easy proof of Proposition \ref{P1.1} will be given in Sect.~3.

The key to everything is the following definition: Let $W$ be a bounded whole line potential.
Write $m_{\pm}(z)$
for the Titchmarsh-Weyl $m$ functions of the operator restricted to the half lines $\Z_+$ and
$\Z_-$, respectively. (Precise formulae for $m_{\pm}$ will be given in Sect.~3 below.)
\begin{Definition}
\label{D1.1}
Let $A\subset\R$ be a Borel set. Then we call $W\in\mathcal V^C$ \textit{reflectionless} on $A$ if
\begin{equation}
\label{1.5}
m_+(t) = - \overline{m_-(t)} \quad\quad \textrm{for (Lebesgue) almost every }t\in A .
\end{equation}
We will also use the notation
\[
\mathcal R (A) = \{ W\in\bigcup_{C>0}\mathcal V^C : W \textrm{ reflectionless on }A \} .
\]
\end{Definition}

Of course, this requirement is non-vacuous only if $A$ has positive Lebesgue measure.
The condition \eqref{1.5} can be reformulated in a number of ways.
I gave it in the form most suitable for the purposes of this paper,
but for a perhaps more immediately accessible
definition, I will also mention that \eqref{1.5} is equivalent to
$\textrm{Re }G(n,t)=0$ for almost every $t\in A$ and all $n\in\Z$, where
\[
G(n,z) = \langle \delta_n , (J-z)^{-1} \delta_n \rangle
\]
is the Green function of the whole line Jacobi matrix with coefficients $W$.
See also \cite[Lemma 8.1]{Teschl} for further information on \eqref{1.5}.

\textit{Warning: }Some authors use a more restrictive definition and
call a potential reflectionless if (in our terminology) it is reflectionless
on $\sigma_{ess}$. For the purposes of this paper, it is essential to work with Definition \ref{D1.1}.

We are now finally ready to state our reformulation of the Breimesser-Pearson Theorem.
\begin{Theorem}
\label{TBP}
Let $V$ be a bounded (half line) potential, and, as above, let $\Sigma_{ac}$ be the essential support
of the absolutely continuous part of the spectral measure. Then
\[
\omega(V) \subset \mathcal R (\Sigma_{ac}) .
\]
\end{Theorem}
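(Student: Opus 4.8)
The plan is to reduce the statement to the original, value-distribution form of the Breimesser-Pearson theorem (this will appear below as Theorem \ref{TBPorig}, with the discrete version proved in Appendix A). Fix $W\in\omega(V)$ and choose $n_j\to\infty$ with $d(S^{n_j}V,W)\to 0$. The first, routine, step is to realize the half-line $m$-functions $m_\pm^W$ of $W$ as limits of objects built from $V$ alone: let $m_\pm^{(n)}$ be the $m$-functions of the shifted potential $S^nV$ restricted to $\Z_+$ and $\Z_-$, respectively, where in the latter case one first extends $S^nV$ to all of $\Z_-$ in some fixed bounded way (the choice is immaterial in the limit, since the added values are pushed off to $-\infty$). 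Then $m_+^{(n)}$ is just the $n$-fold coefficient-stripped $m$-function of $V$, and $m_\pm^W=\lim_j m_\pm^{(n_j)}$ locally uniformly on $\C_+$ --- the usual continuity of $m$-functions under convergence of potentials in the product topology, of the same elementary nature as Proposition \ref{P1.1}.

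The content is now supplied by the Breimesser-Pearson theorem, which in the form relevant here asserts that for every Borel set $A\subset\Sigma_{ac}$ of finite Lebesgue measure and every $S$ from a sufficiently rich class of Borel subsets of the compactified upper half plane,
\[
\omega_{m_+^{(n)}}(A,S)-\omega_{-\overline{m_-^{(n)}}}(A,S)\longrightarrow 0\qquad(n\to\infty),
\]
where $\omega_f(A,S)=|\{t\in A:f(t)\in S\}|$ denotes the value distribution of the boundary function $f$. Applying this along $n=n_j$, and with $A$ replaced by an arbitrary Borel subset $A'\subset A$ (still a subset of $\Sigma_{ac}$), one aims to conclude that the value distributions of $m_+^W$ and of $-\overline{m_-^W}$ agree on every such $A'$. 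Granting that, a short measure-theoretic argument finishes the proof: if $m_+^W(t)\neq-\overline{m_-^W(t)}$ on a set of positive measure, then one can find disjoint Borel sets $S_+,S_-$ in the compactified upper half plane and a set $A''$ with $|A''|>0$ such that $m_+^W(t)\in S_+$ and $-\overline{m_-^W(t)}\in S_-$ for all $t\in A''$; then $\omega_{m_+^W}(A'',S_+)=|A''|>0$ while $\omega_{-\overline{m_-^W}}(A'',S_+)=0$, contradicting their equality. Since $\Sigma_{ac}$ is exhausted, up to null sets, by Borel sets of finite measure, this yields $m_+^W=-\overline{m_-^W}$ a.e.\ on $\Sigma_{ac}$, i.e.\ $W\in\mathcal R(\Sigma_{ac})$.

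The real obstacle is the passage to the limit $n_j\to\infty$ in the displayed relation. Locally uniform convergence $m_\pm^{(n_j)}\to m_\pm^W$ on $\C_+$ does \emph{not} by itself force convergence of the boundary value distributions $\omega_{m_\pm^{(n_j)}}(A',\cdot)$, because absolutely continuous spectrum can be lost in such limits (this is, after all, the phenomenon the whole paper is about). So equality of the limiting value distributions cannot be had by soft arguments; instead one must invoke the quantitative estimates underlying Theorem \ref{TBPorig}, exploiting that an $\omega$-limit agrees with the shifts $S^{n_j}V$ on arbitrarily long windows, so that the relevant $m$-functions are close in a controlled --- not merely pointwise-in-$\C_+$ --- sense. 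In effect, what must be earned here is that the absolutely continuous spectrum of $V$ genuinely survives into the limit at $W$, and that is precisely the heart of the Breimesser-Pearson circle of ideas.
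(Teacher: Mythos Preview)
Your overall strategy matches the paper's exactly: convert $d(S^{n_j}V,W)\to 0$ into locally uniform convergence $m_\pm(n_j,\cdot)\to M_\pm$ of the $m$-functions (this is Lemma~\ref{L4.1}), then combine with Theorem~\ref{TBPorig} to obtain the reflectionless condition. The gap is in what you call ``the real obstacle,'' and your proposed remedy---mining Appendix~A for quantitative estimates---points in the wrong direction.

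The fact you are missing is Theorem~\ref{T2.1}: for Herglotz functions, locally uniform convergence on $\C^+$ is \emph{equivalent} to convergence in value distribution, where value distribution means $\int_A\omega_{F(t)}(S)\,dt$ with $\omega$ harmonic measure (Definition~\ref{D2.1}), not the raw level-set measure $|\{t\in A:f(t)\in S\}|$ you write; the two coincide only when the boundary values are real, see~\eqref{2.8}. This equivalence is soft (normal families plus uniqueness of value-distribution limits) and is precisely what makes the passage to the limit painless. Your worry that ``absolutely continuous spectrum can be lost in such limits'' is beside the point: loss of a.c.\ spectrum would mean $\textrm{Im }M_\pm(t)=0$ on part of $\Sigma_{ac}$, but Theorem~\ref{T2.1} does not care whether the limiting boundary values land in $\C^+$ or on $\R$. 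With Theorem~\ref{T2.1} in hand the proof is immediate: each side of the Breimesser--Pearson relation converges separately along $n_j$, giving $\int_A\omega_{M_-(t)}(-S)\,dt=\int_A\omega_{M_+(t)}(S)\,dt$ for all Borel $A\subset\Sigma_{ac}$, $S\subset\R$; Lebesgue differentiation together with $\omega_z(-S)=\omega_{-\overline z}(S)$ then yields $M_+=-\overline{M_-}$ a.e.\ on $\Sigma_{ac}$. No recourse to the internals of the Breimesser--Pearson proof is needed.
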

Since not many potentials are reflectionless, this gives very strong restrictions
on the structure of potentials with some absolutely continuous spectrum, one of these
being the Oracle Theorem. Other applications of Theorem \ref{TBP}
will be discussed in a moment. For the full picture, please also see Sect.~7, which has
two examples that illustrate what is \textit{not }true in this context.
\subsection{Further consequences of Theorem \ref{TBP}}
Note how ridiculously easy it is to prevent absolutely continuous spectrum.
For example, if $V$ contains arbitrarily large chunks $(W(-R),\ldots , W(R))$ of a potential $W$ that
is not reflectionless on any set of positive measure (and the potentials $W$
that do \textit{not} have this property form a very small subclass), then $\sigma_{ac}(V)=\emptyset$.
Elaborating further on this simple remark, we obtain the following
result that again shows how easy it is to destroy absolutely continuous spectrum.
\begin{Corollary}
\label{C1.3}
Let $U$ be a perturbation that has the following property:
There exists a subsequence $n_j\to\infty$ so that
\[
\limsup_{j\to\infty} \left| U(n_j) \right| > 0 ,
\]
but
\[
\lim_{j\to\infty} |U(n_j-k)| = 0
\]
for all $k\ge 1$.
Then $\Sigma_{ac}(V+U) \cap \Sigma_{ac}(V) = \emptyset$ for any (half line) potential $V$.

In particular, this conclusion holds for every perturbation $U$ of the form
\[
U(n) = \sum_{j=1}^{\infty} u_j \delta_{n,n_j},
\quad\quad n_j-n_{j-1}\to\infty, \quad\limsup_{j\to\infty} |u_j| > 0 .
\]
\end{Corollary}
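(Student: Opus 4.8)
The plan is to derive Corollary \ref{C1.3} from Theorem \ref{TBP} by a contradiction argument built around the $\omega$ limit set. Suppose $A := \Sigma_{ac}(V+U) \cap \Sigma_{ac}(V)$ had positive Lebesgue measure. The idea is that the hypotheses on $U$ force some element $W \in \omega(V+U)$ to arise as a right limit taken along the sequence $n_j$, and that this same $W$ (up to a shift) is forced to lie in $\omega(V)$ as well, because $U(n_j - k) \to 0$ for every fixed $k \ge 1$ means that on the entire half line $\{\dots, -2, -1\}$ to the left of the "spike" at $n_j$, the potentials $V+U$ and $V$ agree in the limit. By Theorem \ref{TBP}, such a $W$ would have to be reflectionless on $A$ when reached from $V+U$, while a shifted copy of it would have to be reflectionless on $A$ when reached from $V$; the conflict will come from the fact that these two reflectionless conditions, imposed on what is essentially the same whole-line potential, cannot both hold once the spike persists in the limit.

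First I would make the passage to limits precise. Pass to a subsequence (still called $n_j$) so that $S^{n_j}(V+U)$ converges in $\mathcal V^C$ to some whole line potential $W$; this is possible by compactness, and $W \in \omega(V+U)$. Along a further subsequence we may also assume $u_j = U(n_j) \to u_\infty$ with $|u_\infty| = \limsup|U(n_j)| > 0$ (or at least $u_\infty \ne 0$ after passing to a subsequence realizing the limsup), so $W(0) = \lim V(n_j) + u_\infty$. Now consider instead the shifted points $n_j - 1$ (or more generally $n_j - k$ for a fixed large $k$): since $U(n_j - \ell) \to 0$ for all $\ell \ge 1$, on every fixed finite window to the left of position $0$ the sequences $S^{n_j}V$ and $S^{n_j}(V+U)$ have the same limit. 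Hence $W$ restricted to $\Z_-$ (the half line $\{\dots,-1,0\}$ minus the single site $0$, i.e.\ $\{\dots,-2,-1\}$, and in fact all of $\Z_-$ up to the value at $0$) is the same as the corresponding limit of $S^{n_j}V$. The cleanest route is: let $\widetilde W \in \omega(V)$ be a limit point of $S^{n_j}V$; then $\widetilde W(n) = W(n)$ for all $n \le -1$, while $\widetilde W(0) = W(0) - u_\infty \ne W(0)$.

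Next I would invoke the hypothesis $A \subset \Sigma_{ac}(V+U)$ and $A \subset \Sigma_{ac}(V)$ together with Theorem \ref{TBP}: $W \in \mathcal R(\Sigma_{ac}(V+U)) \subset \mathcal R(A)$ and $\widetilde W \in \mathcal R(\Sigma_{ac}(V)) \subset \mathcal R(A)$. Now use the reformulation mentioned right after Definition \ref{D1.1}: being reflectionless on $A$ is equivalent to $\operatorname{Re} G(n,t) = 0$ for a.e.\ $t \in A$ and \emph{all} $n \in \Z$, where $G$ is the whole line Green function. For $\widetilde W$ and $W$, the restrictions to $\Z_-$ agree, so the $m_-$ functions coincide; the reflectionless condition \eqref{1.5} then reads $m_+^{W}(t) = -\overline{m_-(t)} = m_+^{\widetilde W}(t)$ for a.e.\ $t \in A$. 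But $m_+$ is the $m$ function of the half line operator on $\Z_+$, and by the standard coefficient-stripping / continued-fraction formula this $m_+$ determines, and is determined by, the potential on $\Z_+$ together with the value at... — more to the point, the \emph{whole line} $m$ functions plus the relation \eqref{1.5} pin down the diagonal Green function $G(0,t) = -1/(m_+(t) + m_-(t))$ and its real part at site $0$ controls, via the known formula $G(0,z)^{-1} = -(m_+(z)+m_-(z))$ and the step relating $G(0,\cdot)$ across a change of the single coefficient $W(0)$, the value $W(0)$ itself on the set where $\operatorname{Re} G = 0$. Thus $W(0) = \widetilde W(0)$ a.e.-forced on $A$, contradicting $W(0) - \widetilde W(0) = u_\infty \ne 0$.

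The main obstacle is exactly this last step: turning "both $W$ and $\widetilde W$ are reflectionless on $A$ and agree on $\Z_-$" into "they agree at site $0$," i.e.\ showing that the single differing coefficient is over-determined. The honest way to handle it is to observe that $m_+$ is an analytic function determined by the half line potential on $\Z_+$ alone (it does not see $W(0)$ at all), so $m_+^W = m_+^{\widetilde W}$ is automatic if $W$ and $\widetilde W$ agree on $\Z_+$ — and they need not; so I should instead argue symmetrically using $n_j + 1$ as well, or better, use the \emph{left} limits: since the spike sits at $0$ and $U \to 0$ on both sides sufficiently far out would be ideal, but the hypothesis only gives decay on \emph{one} side ($k \ge 1$, i.e.\ to the left). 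Therefore the right formulation is to compare the two half line restrictions at the boundary: $W$ and $\widetilde W$ have the same $\Z_-$ part, hence the same $m_-$; reflectionlessness on $A$ gives $m_+^W = -\overline{m_-} = m_+^{\widetilde W}$ a.e.\ on $A$; now $m_+$ is a Herglotz function, so equality a.e.\ on a positive measure set forces $m_+^W \equiv m_+^{\widetilde W}$; but $m_+$ determines the half line potential on $\Z_+$ uniquely, so $W(n) = \widetilde W(n)$ for all $n \ge 1$ too, and then $W$ and $\widetilde W$ are whole line potentials agreeing everywhere except possibly at $0$ — at which point one more application of coefficient stripping (the $n=1$ recursion for $m_+$, which \emph{does} involve $W(1)$ but not $W(0)$, versus the relation $G(0,z) = -(m_+ + m_-)^{-1}$ whose real part vanishes on $A$) forces $W(0) = \widetilde W(0)$ as well, the desired contradiction. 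Finally, the "in particular" clause is immediate: for $U(n) = \sum u_j \delta_{n,n_j}$ with $n_j - n_{j-1} \to \infty$, the sequence $n_j$ satisfies $|U(n_j)| = |u_j|$ with $\limsup |u_j| > 0$, while $|U(n_j - k)| = 0$ for all large $j$ once $n_j - n_{j-1} > k$, so the general hypothesis holds.
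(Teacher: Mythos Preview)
Your approach is essentially the alternative route the paper itself flags right after the proof: bypass the Oracle Theorem and go directly through Theorem~\ref{TBP} together with the uniqueness property of reflectionless potentials (Proposition~\ref{P4.1}(c)). The paper's written proof instead invokes the Oracle Theorem: if $|A|>0$ there would be a single oracle predicting both $V$ and $V+U$ from the same finite window to the left, which is impossible since $U$ is negligible on that window but $|U(n_j)|\ge\epsilon$. Your $\omega$-limit argument is the cleaner of the two and matches what the paper says one ``could'' do.

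That said, there is a genuine slip in your execution. In this paper $\Z_-=\{\dots,-1,0\}$, and $M_-=m_-^W(0,\cdot)$ depends on the coefficient at site $0$ (see \eqref{defm} and Lemma~\ref{L4.1}). You correctly observe that $W$ and $\widetilde W$ agree only for $n\le -1$ and differ at $n=0$, but then assert ``same $\Z_-$ part, hence the same $m_-$,'' which is false with these conventions. The whole tail of your argument (getting agreement on $\Z_+$, then trying to squeeze out $W(0)=\widetilde W(0)$ by coefficient stripping) is an attempt to recover from this, and it never quite closes. The clean fix is a one-unit shift: by Proposition~\ref{P4.1}(a), $S^{-1}W,\,S^{-1}\widetilde W\in\mathcal R(A)$; now $(S^{-1}W)_-=(S^{-1}\widetilde W)_-$ because these only involve $W(m)$ for $m\le -1$; Proposition~\ref{P4.1}(c) then gives $S^{-1}W=S^{-1}\widetilde W$, hence $W=\widetilde W$, contradicting $W(0)\ne\widetilde W(0)$. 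Equivalently, take limits along $n_j-1$ from the start.

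Two smaller points. First, you silently assume $V,\,V+U\in\mathcal V^C$; the paper disposes of the unbounded case in one line via Simon--Spencer, and you should too. Second, once you have Proposition~\ref{P4.1}(c) in hand, all the hand-wringing about $G(0,\cdot)$ and coefficient stripping is unnecessary---the uniqueness statement is exactly what you need, and re-deriving it inline only obscures the argument.
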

More precisely, the claim is that one can choose representatives with empty intersection
(recall that $\Sigma_{ac}$ is only determined up to sets of measure zero).
To obtain the Jacobi version, interpret $U(n)=(\alpha(n),\beta(n))$ and $|U(n)|=|\alpha(n)|
+|\beta(n)|$; we can allow negative and/or unbounded
coefficients here, but then we must require that both the original and
the perturbed operator be Jacobi matrices with bounded $a$'s,
that is, if $V=(a(n),b(n))$, we demand that $0<a(n)\le C$ and $0<a(n)+\alpha(n)\le C$
for some $C>0$.

Of course, Corollary \ref{C1.3} is a result very much in the spirit of Pearson's classic \cite{Psc}
(sparse perturbations destroy absolutely continuous spectrum), but it is much
more general. See also \cite[Sect.~5]{BP1} for the $V=0$ case.

To get a feeling for the power of Theorem \ref{TBP},
it is also instructive to compare the cheap proof below with the
traditional approach to analyzing sparse potentials
(with $V=0$),
which uses a considerable amount of (so-called)
hard analysis. See, for example, \cite{KLS,KruRem,Mol,Psc,Remsparse,Remsc,Zla}.
\begin{proof}[Proof of Corollary \ref{C1.3}]
If either $V\notin\bigcup_{C>0} \mathcal V^C$ or $V+U\notin\bigcup_{C>0}\mathcal V^C$,
then the corresponding operator
has empty absolutely continuous spectrum by the Simon-Spencer Theorem.
Otherwise, $V,V+U\in\mathcal V^C$ for some $C>0$, and then
$A=\Sigma_{ac}(V) \cap \Sigma_{ac}(V+U)$ cannot have positive measure,
because then the Oracle Theorem would provide oracles that work for both $V$ and $V+U$.
This is impossible because $|U(n_j)|\ge\epsilon>0$ on a
suitable subsequence, but $U$ is small on long intervals
to the left of these points, so no (continuous) oracle with
sufficiently high accuracy can
predict both $V$ and $V+U$ correctly.
\end{proof}
It is not necessary to use the Oracle Theorem here. Corollary \ref{C1.3} also follows directly from
Theorem \ref{TBP} if we make use of a standard uniqueness property of reflectionless potentials,
which is given in Proposition \ref{P4.1}(c) below. When we prove the Oracle Theorem in Sect.~4, we will
see that this is essentially a rewording of the original argument.

As another immediate consequence of Theorem \ref{TBP}, we effortlessly recover an important result
of Last and Simon on the semicontinuity of $\Sigma_{ac}$:
\begin{Corollary}[Last-Simon \cite{LS}]
\label{C1.1}
If $W\in\omega(V)$, then $\Sigma_{ac}(W_{\pm})\supset\Sigma_{ac}(V)$.
\end{Corollary}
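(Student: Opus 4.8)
The plan is to combine Theorem \ref{TBP} with the soft fact that a potential reflectionless on a set $A$ of positive Lebesgue measure necessarily satisfies $\operatorname{Im} m_\pm(t+i0) > 0$ for almost every $t \in A$; this puts $A$, up to a null set, inside an essential support of the absolutely continuous part of the spectral measure of \emph{each} of the two half-line restrictions.

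First I would invoke Theorem \ref{TBP}: since $W \in \omega(V) \subset \mathcal{R}(\Sigma_{ac}(V))$, the whole line potential $W$ is reflectionless on $A := \Sigma_{ac}(V)$, i.e.
\[
m_+(t+i0) = -\,\overline{m_-(t+i0)} \qquad \text{for a.e.\ } t \in A ,
\]
where $m_\pm$ are the half-line $m$ functions associated with $W$. (Their boundary values exist and are finite a.e.\ because $m_\pm$ are Herglotz functions.) Taking real and imaginary parts, for a.e.\ $t \in A$ we get
\[
m_+(t+i0) + m_-(t+i0) = 2i\,\operatorname{Im} m_-(t+i0) , \qquad \operatorname{Im} m_+(t+i0) = \operatorname{Im} m_-(t+i0) .
\]
So it remains to rule out $m_+(t+i0)+m_-(t+i0) = 0$ on a subset of $A$ of positive measure; on the complement we then have $\operatorname{Im} m_\pm(t+i0) \ne 0$, hence $> 0$ since $\operatorname{Im} m_\pm \ge 0$. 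For this I would use the formulae of Section 3: the Green function $G(n_0,z)$ at a suitable site $n_0$ is a nonzero constant multiple of $\bigl(m_+(z)+m_-(z)\bigr)^{-1}$, and $G(n_0,\cdot)$ is itself a (non-constant) Herglotz function, so it has a finite boundary value a.e.; this forces $m_+(z)+m_-(z)$ to have a \emph{nonzero} boundary value a.e., as needed. (In the Jacobi case this identity acquires harmless extra factors $a(n)$, bounded away from $0$ and $\infty$, which change nothing.)

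Finally, since (up to normalization) $d\rho_{ac}^{W_\pm}(t) = \frac1\pi \operatorname{Im} m_\pm(t+i0)\,dt$, an essential support of the absolutely continuous part of the spectral measure of $W_\pm$ is $\{ t : \operatorname{Im} m_\pm(t+i0) > 0 \}$ modulo null sets, and by the previous step this set contains $A = \Sigma_{ac}(V)$ modulo a null set. Hence $\Sigma_{ac}(W_\pm) \supset \Sigma_{ac}(V)$. The genuinely non-trivial ingredient here is Theorem \ref{TBP} itself; everything else is soft, and the only point that takes a moment to verify is the identity relating $m_+ + m_-$ to the reciprocal of a diagonal Green function, together with the standard fact that Herglotz functions have finite nontangential boundary values almost everywhere — both of which are immediate from the set-up of Section 3.
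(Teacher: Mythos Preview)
Your proof is correct and follows essentially the same route as the paper: invoke Theorem~\ref{TBP} to get $W\in\mathcal R(\Sigma_{ac}(V))$, and then show (what the paper isolates as Proposition~\ref{P4.1}(b)) that $W\in\mathcal R(A)$ forces $\operatorname{Im}M_\pm(t)>0$ a.e.\ on $A$. The only cosmetic difference is in ruling out $M_++M_-=0$ on a set of positive measure: the paper observes that $M_++M_-$ is itself Herglotz and invokes the boundary-uniqueness property, while you use that $-1/(M_++M_-)$ is (a multiple of) the diagonal Green function, hence Herglotz with finite boundary values a.e.; both arguments are equally short and valid.
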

Here,
\[
W_{\pm}:= W \big|_{\Z_{\pm}}
\]
denote the half line restrictions of $W$, and by $\Sigma_{ac}(W_{\pm})$, we mean
the essential supports of the absolutely continuous parts of the spectral measures of
the corresponding half line operators.
\begin{proof}
By Theorem \ref{TBP}, $W\in\mathcal R(\Sigma_{ac}(V))$. By Proposition \ref{P4.1}(b) below,
this implies the claim.
\end{proof}
We now move on to a new topic related to Theorem \ref{TBP}.
First of all, recall that the essential spectrum satisfies the opposite inclusion.
In fact, a routine argument
using Weyl sequences even shows that if $W\in\omega(V)$, then
\begin{equation}
\label{1.1}
\sigma(W_{\Z}) \subset \sigma_{ess}(V) .
\end{equation}
We have written $W_{\Z}$ to emphasize the fact that we need to consider the \textit{whole line} operator
associated with $W$ here (but of course $V$ continues to be a half line potential); \eqref{1.1} is
certainly not correct for the half line operators generated by $W$. See also \cite{GeoI,LS2} for
more sophisticated results on the relation between $\omega(V)$ and $\sigma_{ess}(V)$.

We now realize that we obtain especially strong restrictions on the possible
$\omega$ limit sets $\omega(V)$ if $\Sigma_{ac}(V)=\sigma_{ess}(V)$, and this
set is essentially closed (that is, its intersection with an arbitrary open set
is either empty or of positive Lebesgue measure).
Then any $W\in\omega(V)$ must satisfy
\begin{equation}
\label{1.6}
\Sigma_{ac}(W_+)=\Sigma_{ac}(W_-)=\sigma(W_{\Z})=\sigma_{ess}(V) .
\end{equation}
Again, it would be more careful to say that it is possible to choose
representatives of $\Sigma_{ac}(W_{\pm})$ satisfying \eqref{1.6}. It is also helpful to
recall here that there is a decomposition method for both $\sigma_{ess}$ and $\sigma_{ac}$,
that is, if $U$ is a whole line potential and $U_{\pm}$ denote, as usual, the half line
restrictions, then
\[
\sigma_{ess}(U)=\sigma_{ess}(U_-)\cup\sigma_{ess}(U_+) ,
\]
and similarly for $\sigma_{ac}$. This is in fact obvious because cutting into two
half lines (at $n=0$) amounts to replacing $a(0)$ by $0$, which is a rank two perturbation.

In this generality, the theme of studying \eqref{1.6} was introduced and investigated by Damanik, Killip,
and Simon in \cite{DKS};
see especially \cite[Theorem 1.2]{DKS}. By using Theorem \ref{TBP},
we can go beyond the results of \cite{DKS}.

Indeed, Theorem \ref{TBP} gives the strong additional condition that
$W\in\mathcal R(\Sigma_{ac})$.
For a quick illustration of how this can be used,
recall that the only $W=(a_0,b_0)\in\mathcal R([-2,2])$ with $\sigma(W_{\Z})=[-2,2]$ is
the free Jacobi matrix
\begin{equation}
\label{1.8}
a_0(n)=1, \quad b_0(n)=0 .
\end{equation}
This is well known (see, for instance, \cite[Corollary 8.6]{Teschl}), but we will also
provide a proof here at the end of Sect.~6.

We can now use this to easily recover another important result, which
is due to Denisov; earlier work in this direction was done by Rakhmanov \cite{Rakh}.
\begin{Corollary}[Denisov \cite{Den}]
\label{C1.2}
If $J$ is a bounded (half line) Jacobi matrix satisfying
\[
\sigma_{ess}(J)=\Sigma_{ac}(J)=[-2,2],
\]
then $a(n)\to 1$, $b(n)\to 0$ as $n\to\infty$.
\end{Corollary}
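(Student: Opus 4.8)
The plan is to show that the hypothesis $\sigma_{ess}(J)=\Sigma_{ac}(J)=[-2,2]$ forces the $\omega$ limit set $\omega(V)$ (where $V=(a,b)$ denotes the coefficient sequence of $J$, viewed as a half line potential) to consist of a single point, namely the free Jacobi matrix of \eqref{1.8}, and then to read off the claimed convergence from Proposition \ref{P1.1}.

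First I would fix an arbitrary $W\in\omega(V)$ and pin down its whole line spectrum. Since $[-2,2]$ is a closed interval it is in particular essentially closed, and by assumption $\Sigma_{ac}(V)=\sigma_{ess}(V)=[-2,2]$; hence the discussion leading to \eqref{1.6} applies and gives $\sigma(W_{\Z})=[-2,2]$. (Alternatively, one argues directly: $\sigma(W_{\Z})\subset\sigma_{ess}(V)=[-2,2]$ is \eqref{1.1}, while Corollary \ref{C1.1} gives $\Sigma_{ac}(W_\pm)\supset[-2,2]$, so $\sigma(W_{\Z})\supset\sigma_{ac}(W_{\Z})\supset[-2,2]$.)

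Next, Theorem \ref{TBP} applies with $\Sigma_{ac}=[-2,2]$ and yields $W\in\mathcal R([-2,2])$. Now I would invoke the rigidity statement recorded above Corollary \ref{C1.2} — the only $W=(a_0,b_0)\in\mathcal R([-2,2])$ with $\sigma(W_{\Z})=[-2,2]$ is $a_0\equiv 1$, $b_0\equiv 0$ (to be proved at the end of Sect.~6, cf.\ also \cite[Corollary 8.6]{Teschl}) — to conclude that $W$ is the free Jacobi matrix. As $W\in\omega(V)$ was arbitrary, $\omega(V)=\{(1,0)\}$. Finally, Proposition \ref{P1.1} gives $d(S^nV,\omega(V))\to 0$, i.e.\ $d(S^nV,(1,0))\to 0$; since the $n=0$ coordinate alone contributes the term $|a(n)-1|+|b(n)|$ (with weight $2^0=1$) to this series, this forces $a(n)\to 1$ and $b(n)\to 0$.

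I do not expect a serious obstacle in the self-contained part of the argument: given Theorem \ref{TBP} it is essentially bookkeeping with spectra. The one genuine input beyond Theorem \ref{TBP} is the uniqueness of reflectionless whole line operators on $[-2,2]$, and that is where the real (though classical) work sits; it is deferred to Sect.~6. A small point worth being careful about is the passage from ``$\omega(V)$ is a single point'' to honest convergence of the sequences $a(n),b(n)$, but this is exactly the content of the last assertion of Proposition \ref{P1.1}.
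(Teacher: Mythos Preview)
Your proposal is correct and follows essentially the same route as the paper's own proof: identify $\omega(V)$ via Theorem \ref{TBP} and \eqref{1.6}, invoke the uniqueness of the reflectionless whole line operator with spectrum $[-2,2]$, and then read off convergence from Proposition \ref{P1.1}. Your write-up is slightly more explicit (spelling out the two containments for $\sigma(W_{\Z})$ and the passage from $d(S^nV,(1,0))\to 0$ to coordinate convergence), but the argument is the same.
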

\begin{proof}
By Proposition \ref{P1.1}, $d(S^n(a,b),\omega(J))\to 0$ as $n\to\infty$. By Theorem \ref{TBP}
and the observations made above (see \eqref{1.6}),
any $W=(a_0,b_0)\in\omega(J)$ must satisfy
$W\in\mathcal R([-2,2])$ and $\sigma(W)=[-2,2]$. But as just pointed out,
the only such $W=(a_0,b_0)$ is given by \eqref{1.8}.
\end{proof}
Clearly, this idea can be pushed further. We automatically
obtain valuable information on the asymptotics of $V=(a,b)$ provided we can
extract sufficiently detailed information about the possible elements
of $\omega(V)$ from \eqref{1.6} and the statement of Theorem \ref{TBP}.
In particular, this approach works very smoothly in the general finite gap case
(this is the usual terminology, meaning finitely many gaps).
\begin{Theorem}
\label{TDR}
Suppose that $J$ is a bounded (half line) Jacobi matrix satisfying
\[
\sigma_{ess}(J)=\Sigma_{ac}(J)= \bigcup_{j=0}^N
[\alpha_j,\beta_{j+1}]=:E \quad\quad (\alpha_j<\beta_{j+1}<\alpha_{j+1}) .
\]
Then $d(S^n(a,b), \mathcal T_N ) \to 0$, where $\mathcal T_N=\mathcal T_N(E)$
denotes the set of finite gap Jacobi coefficients with spectrum $E$.
\end{Theorem}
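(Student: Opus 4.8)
The plan is to run the argument of Corollary~\ref{C1.2} verbatim, with the single point $\{(1,0)\}$ replaced by the full isospectral torus $\mathcal{T}_N(E)$. By Proposition~\ref{P1.1} we have $d(S^n(a,b),\omega(J))\to 0$, and since $\omega(J)\subseteq\mathcal{T}_N(E)$ would give $d(x,\mathcal{T}_N(E))\le d(x,\omega(J))$ for every $x$, the whole theorem reduces to the inclusion $\omega(J)\subseteq\mathcal{T}_N(E)$. So fix $W\in\omega(J)$. The set $E$ is a finite union of non-degenerate closed intervals, hence essentially closed, and $\Sigma_{ac}(J)=\sigma_{ess}(J)=E$; therefore \eqref{1.6} applies and yields $\sigma(W_{\Z})=E$ (and $\Sigma_{ac}(W_\pm)=E$). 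On the other hand, Theorem~\ref{TBP} gives $W\in\mathcal{R}(\Sigma_{ac}(J))=\mathcal{R}(E)$, i.e.\ $W$ is reflectionless on $E$. It thus remains to establish the following purely spectral-theoretic fact, the finite-gap analogue of the statement (recalled above, and to be proved at the end of Sect.~6) that the free Jacobi matrix is the only element of $\mathcal{R}([-2,2])$ with spectrum $[-2,2]$: if $W\in\bigcup_{C>0}\mathcal{V}^C$ is reflectionless on $E$ and $\sigma(W_{\Z})=E$, then $W\in\mathcal{T}_N(E)$.

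To prove this, I would work with the diagonal Green function $g_n(z)=G(n,z)=\langle\delta_n,(W_{\Z}-z)^{-1}\delta_n\rangle$, a Herglotz function with $g_n(z)=-1/z-b(n)/z^2+O(z^{-3})$ at infinity. Since $\sigma(W_{\Z})=E$, each $g_n$ is analytic and real on $\bbC\setminus E$ (in particular $W_{\Z}$ has no eigenvalues in the gaps), and reflectionlessness gives $\operatorname{Re}g_n(t+i0)=0$ for a.e.\ $t\in E$, so the boundary values of $g_n$ on the interior of $E$ are purely imaginary and agree from above and below after squaring. Hence $g_n^2$ extends analytically across the interior of $E$, is analytic across the gaps, and has at most simple poles at the $2N+2$ band edges. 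With $R(z):=\prod_{j=0}^{N}(z-\alpha_j)(z-\beta_{j+1})$ it follows that $g_n^2 R$ is entire; the behaviour at infinity identifies it as a polynomial of degree $2N$, and single-valuedness of $g_n$ on $\bbC\setminus E$ forces it to be a perfect square, so $g_n(z)=-P_n(z)/\sqrt{R(z)}$ with $P_n$ monic of degree $N$ and $\sqrt{R}$ the branch $\sim z^{N+1}$ at $+\infty$. The Herglotz property then puts exactly one root of $P_n$ into each of the $N$ closed bounded gaps, so the Green functions of $W_{\Z}$ — hence its Dirichlet divisor — are exactly those of a finite-gap operator with spectrum $E$; equivalently, the reflectionless condition pins down the half-line functions $m_\pm$ of $W$ to be those of a member of $\mathcal{T}_N(E)$. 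Since an operator is determined by $(m_+,m_-)$, and the standard reconstruction from this algebro-geometric data (the Taylor coefficients of $g_n$ at infinity give $b(n)$, then $a(n)^2$, and so on through the trace formulas) is precisely what defines $\mathcal{T}_N(E)$, we conclude $W\in\mathcal{T}_N(E)$. All of this is classical; see \cite[Ch.~8]{Teschl}.

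The main obstacle is exactly this last, self-contained step: identifying the whole-line operators reflectionless on $E$ with spectrum $E$ as the isospectral torus $\mathcal{T}_N(E)$. The delicate points are ruling out eigenvalues of $W_{\Z}$ both in the gaps (immediate from $\sigma(W_{\Z})=E$) and embedded in $E$ (reflectionless operators have purely absolutely continuous spectrum on $E$), controlling the $(z-\mathrm{edge})^{-1/2}$ singularities of $g_n$ to obtain the polynomial normal form, and verifying that the Dirichlet data so obtained genuinely parametrizes a finite-gap Jacobi matrix rather than a merely formal candidate. For $N=0$ the whole discussion collapses to $g_n(z)=-1/\sqrt{z^2-4}$, which forces $a(n)\equiv 1$, $b(n)\equiv 0$ — the computation promised at the end of Sect.~6 — and the general case is the same argument over the hyperelliptic curve $y^2=R(z)$ of genus $N$.
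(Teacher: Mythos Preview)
Your reduction in the first paragraph is exactly the paper's: Proposition~\ref{P1.1}, Theorem~\ref{TBP}, and \eqref{1.6} combine to give $\omega(J)\subseteq\{W\in\mathcal R(E):\sigma(W_{\Z})=E\}$, so everything rests on identifying this set with $\mathcal T_N(E)$. Here the two arguments diverge. The paper works with $H=-1/G$ at the single site $n=0$ and uses the exponential Herglotz representation (Proposition~\ref{P5.2}) to pin down the Krein function $\xi$, yielding the explicit product formula \eqref{formH}; it then invokes its own Corollary~\ref{C5.1} to classify all reflectionless decompositions $H=F_++F_-$, and uses $\sigma(W)=E$ explicitly to rule out the intermediate choices $0<f(\mu_j)<1$ (which would force an eigenvalue of $W_{\Z}$ at $\mu_j$), leaving exactly the sign data $(\mu_j,s_j)$ that parametrize the torus. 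You instead work with $g_n$ at \emph{every} site, use the squaring trick to get $g_n=-P_n/\sqrt{R}$ directly, and reconstruct the coefficients via trace formulas. Your route is the more classical algebro-geometric one and legitimately sidesteps the decomposition analysis by knowing $g_n$ for all $n$; the paper's route is more self-contained within its Section~5 machinery, makes the role of the hypothesis $\sigma(W)=E$ in selecting the correct $m_\pm$ very transparent, and produces the torus homeomorphism as a byproduct. One caveat worth flagging: your aside that ``the reflectionless condition pins down the half-line functions $m_\pm$'' is not quite accurate on its own --- given only $g_0$ and reflectionlessness, $m_\pm$ are \emph{not} determined (this is precisely the $f(\mu_j)\in[0,1]$ freedom the paper analyzes via Corollary~\ref{C5.1}); it is the additional hypothesis $\sigma(W)=E$ that forces $f(\mu_j)\in\{0,1\}$. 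Your all-$n$ trace-formula route avoids this issue, but the $m_\pm$ remark does not stand alone. The band-edge regularity (your ``at most simple poles'' for $g_n^2$) that you flag as delicate is exactly what the paper's $\xi$-function argument handles cleanly, since $0\le\xi\le 1$ automatically bounds the edge singularities.
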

The statement of Theorem \ref{TDR} may sound a bit vague, but
the set $\mathcal T_N$ actually has a very explicit description, and a considerable amount
of work has
been done on these operators (and their continuous analogs). See, for example,
\cite{BGHT,Dub,KrRem,McK,Mum,Teschl,vMoer,vMoerMum}; also notice that these finite gap operators
have been popular in several different (but overlapping) areas, including spectral theory,
integrable systems, and algebraic geometry.

It is not hard to prove that with the topology induced by
$d$, the set $\mathcal T_N\subset\mathcal V^C$ is homeomorphic to an $N$-dimensional torus (thus the
notation). Here, we now simply define
$\mathcal T_N(E)$ as the set of (whole line) potentials $W=(a,b)\in\mathcal R(E)$
that satisfy $\sigma(W)=E$. This direct proof does not require any of the machinery just mentioned; we will
give it in Sect.~6.

In \cite{DKS}, Damanik, Killip, and Simon prove Theorem \ref{TDR} in the special
case where the coefficients  $(a,b)\in\mathcal T_N(E)$ are periodic. This imposes restrictions
on the spectrum $E$ and is not the generic case; in general,
the coefficients are only quasi-periodic.
So Theorem \ref{TDR} generalizes \cite[Theorem 1.2]{DKS}; moreover, the approach
via Theorem \ref{TBP} gives a rather elegant proof and seems suitable for further
generalization. We can in fact immediately go beyond Theorem \ref{TDR} if we make use of
work of Sodin and Yuditskii \cite{SY}. However, it seems that for a full understanding of
these phenomena, more detailed knowledge about the reflectionless potentials involved
here is needed.
\subsection{Organization of this paper}
The proofs of Theorem \ref{TBP} and Proposition \ref{P1.1} will be given in Sect.\ 3.
In Sect.~2, we discuss some preparatory material. The Oracle Theorem is proved in Sect.~4.
Small limit sets $\omega(V)$ are the subject of Sect.~6; in particular, Theorem \ref{TDR} is
proved there. Before we can do that, we present additional preparatory material on reflectionless
potentials in general in Sect.~5. This also gives us the opportunity to present two counterexamples
(see Theorems \ref{T5.1}, \ref{T5.2})
that address issues that were briefly mentioned above in our discussion of the Oracle Theorem.

Other examples that address the question of whether stronger statements (than
Theorem \ref{TBP}) might be possible are presented in Sect.~7. While this material is not,
strictly speaking, needed for the development of the results discussed above, it is
certainly advisable to take a quick look at this section at an early stage.

The appendix has a complete proof of the original result of
Breimesser and Pearson, which is formulated as Theorem \ref{TBPorig} below.
It seems appropriate to include a full proof here because of the central importance
of this result. Also, Breimesser and Pearson work in the continuous
setting and it thus seems useful to have a complete proof for the
discrete case written up, too. Of course, I should also be quick to emphasize that while
I do make a few minor changes in the details of the presentation, the overall strategy is exactly
the same as in the original treatment of Breimesser-Pearson \cite{BP1,BP2}.

I will also try to achieve a fuller understanding of the Breimesser-Pearson Theorem by
trying to pinpoint the decisive facts that make the proof work. As a result of this,
I have now come to the conclusion that the following
identity seems to be at the heart of the matter:
\begin{equation}
\label{1.11}
\begin{pmatrix} 1 & 0 \\ 0 & -1 \end{pmatrix} \begin{pmatrix} z & 1 \\ -1 & 0 \end{pmatrix}
\begin{pmatrix} 1 & 0 \\ 0 & -1 \end{pmatrix} = \begin{pmatrix} z & -1 \\ 1 & 0 \end{pmatrix} .
\end{equation}

Perhaps one final point deserves mention here: In the appendix, I will take the
opportunity to advertise the matrix formalism for handling the linear fractional transformations
(viewed projectively)
that are so central in Weyl theory. This notation, of course, is in common use in other areas and actually
has been used in this context, too \cite{FHS,Simopuc2}, but most
presentations of Weyl theory still give uninspired computational verifications of facts that
become much clearer once the matrix formalism is adopted. See also \cite{RemWeyl} for
further information.

\textit{Acknowledgment: }Special thanks are due to Barry Simon for helpful comments on
this paper and a lot more.
\section{Convergence of Herglotz functions}
The goal of this section is to relate convergence in value distribution, as defined
in Definition \ref{D2.1} below, to other more familiar notions of convergence. This
will become important later because the Breimesser-Pearson Theorem is formulated in these terms.

We denote the set of Herglotz functions by $\mathcal H$, that is,
\[
\mathcal H = \{ F: \bbC^+ \to \bbC^+ : F\textrm{ holomorphic} \} ;
\]
here, $\C^+=\{z\in\C : \textrm{Im }z>0\}$ is the upper half plane in $\C$.

The Herglotz representation theorem says that $F\in \mathcal H$ precisely if $F$
is of the form
\begin{equation}
\label{Hrep}
F(z) = a + \int_{\bbR_{\infty}} \frac{1+tz}{t-z}\, d\nu(t) ,
\end{equation}
with $a\in\bbR$, and $\nu\not= 0$ is a finite, positive Borel measure on $\bbR_{\infty}
=\bbR \cup \{ \infty \}$. Here, we equip $\R_{\infty}$ with the topology of the $1$-point
compactification of $\R$. Both $a$ and $\nu$ are uniquely determined by $F\in\mathcal H$.

If we let
\[
b=\nu(\{ \infty \}),\quad\quad d\rho(t) = (1+t^2)\chi_{\bbR}(t)\, d\nu(t) ,
\]
then \eqref{Hrep} takes the more familiar form
\[
F(z) = a + bz + \int_{-\infty}^{\infty} \left( \frac{1}{t-z} - \frac{t}{t^2+1} \right) \, d\rho(t) .
\]
However, since it's nice to have finite measures on a compact space, \eqref{Hrep} is often
more convenient to work with.

The two most frequently used notions
of convergence in this context are uniform convergence of the Herglotz functions
on compact subsets of $\bbC^+$ and weak $*$ convergence of the measures $\nu$. It is well known
that these are essentially equivalent; see Theorem \ref{T2.1} below.

The work of Pearson, partly in collaboration with Breimesser \cite{BP1,BP2,P,Pearvd},
suggests to also introduce \textit{convergence in value distribution, }as follows:
\begin{Definition}
\label{D2.1}
If $F_n, F\in \mathcal H$, we say that $F_n\to F$ in value distribution if
\begin{equation}
\label{convvd}
\lim_{n\to\infty} \int_A \omega_{F_n(t)}(S)\, dt = \int_A \omega_{F(t)}(S)\, dt
\end{equation}
for all Borel sets $A, S \subset \bbR$, $|A|<\infty$.
\end{Definition}

Here, for $z=x+iy\in\bbC^+$,
\begin{equation}
\label{harmmeas}
\omega_z(S) = \frac{1}{\pi} \int_S \frac{y}{(t-x)^2 + y^2}\, dt
\end{equation}
denotes harmonic measure in the upper half plane, and if $G\in\mathcal H$, $t\in\bbR$, we define
$\omega_{G(t)}(S)$ as the limit
\[
\omega_{G(t)}(S) = \lim_{y\to 0+} \omega_{G(t+iy)}(S) .
\]
Since $z\mapsto \omega_{G(z)}(S)$ is a non-negative harmonic function on $\C^+$,
the limit exists for almost every $t\in\R$.
In particular, the integrands from \eqref{convvd} are now defined almost everywhere.

It is also helpful to recall the following:
If $G\in\mathcal H$, then $G(t)\equiv \lim_{y\to 0+} G(t+iy)$ exists for almost every $t\in\R$.
If, moreover, $\textrm{Im }G(t)>0$, then the dominated convergence theorem
implies that $\lim_{y\to 0+} \omega_{G(t+iy)}(S)$ exists for arbitrary $S$
and coincides with the direct definition where we just substitute $G(t)$ for $z$ in \eqref{harmmeas}.

On the other hand, if $G(t)$ (exists and) is real, then
\[
\lim_{y\to 0+} \omega_{G(t+iy)}(S) = \begin{cases} 0 & G(t) \notin \overline{S} \\
1 & G(t) \in \mathring{S} \end{cases} .
\]
So for nice sets $S$ (and away from the boundary), $\omega_{G(t)}(S)$ is essentially
$\chi_S(G(t))$ if $G(t)\in\R$. This observation also explains the terminology: If $G(t)\in\R$
for almost every $t\in A$, then
\begin{equation}
\label{2.8}
\int_A \omega_{G(t)}(S)\, dt = \left| \{ t\in A: G(t)\in S \} \right|
\end{equation}
gives information on the distribution of the (boundary) values of $G$.
To completely prove \eqref{2.8},
first note that this formula holds for intervals $S$ by the discussion above
and the fact that $| G^{-1}(\{ a\} ) |=0$ for all $a\in\R$. Now both sides of \eqref{2.8} define
measures on the Borel sets $S\subset\R$:
this is obvious for the right-hand side, and as for the left-hand side,
the most convenient argument is to just refer to formula \eqref{spa} below.
Thus we obtain \eqref{2.8} for arbitrary Borel sets $S$.

One final word of caution is in order: It is not necessarily true that $\omega_{G(t)}(S)$ only depends on
the set $S$ and the number $G(t)$, as the notation might suggest. However, the
above remarks show that this statement is almost true, and no difficulties will be caused by
this rather subtle point.

Notice that a limit in value distribution, if it exists, is unique: If $F_n\to F$ but also
$F_n \to G$ in value distribution, then, by Lebesgue's differentiation theorem,
$\omega_{F(t)}(S) = \omega_{G(t)}(S)$ for almost every $t\in\R$ for fixed $S$.
However, a countable collection of sets $S_n$ clearly suffices to recover $F(t)$ from
the values of $\omega_{F(t)}(S_n)$ (we can use the open intervals with rational endpoints, say),
so it in fact follows that $F(t)=G(t)$ for almost every $t\in\R$. But this is what we claimed
because Herglotz functions are uniquely determined by their boundary values on a set of positive measure.

\begin{Theorem}
\label{T2.1}
Suppose that $F_n, F\in\mathcal H$, and let $a_n$, $a$, and $\nu_n, \nu$ be the associated
numbers and measures, respectively,
from representation \eqref{Hrep}. Then the following are equivalent:\\
(a) $F_n(z)\to F(z)$ uniformly on compact subsets of $\bbC^+$;\\
(b) $a_n\to a$ and $\nu_n \to \nu$ weak $*$ in $\mathcal M(\bbR_{\infty})$, that is,
\[
\lim_{n\to\infty} \int_{\R_{\infty}} f(t)\, d\nu_n(t) = \int_{\R_{\infty}} f(t)\, d\nu(t)
\]
for all $f\in C(\bbR_{\infty})$;\\
(c) $F_n \to F$ in value distribution, that is, \eqref{convvd} holds for all
Borel sets $A, S\subset\R$, $|A|<\infty$;\\
(d) \eqref{convvd} holds for all open, bounded intervals $A=(a,b)$, $S=(c,d)$.
\end{Theorem}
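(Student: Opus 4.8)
The plan is to prove the cycle of implications (a) $\Rightarrow$ (b) $\Rightarrow$ (c) $\Rightarrow$ (d) $\Rightarrow$ (a), which seems the most economical route since (a) $\Leftrightarrow$ (b) is essentially classical. For (a) $\Rightarrow$ (b), I would first note that the $a_n$ are recovered from $F_n(i)$ (indeed $\mathrm{Re}\,F_n(i)=a_n$), so $a_n\to a$ is immediate. For the measures, since $\nu_n(\R_\infty)=\mathrm{Im}\,F_n(i)\to\mathrm{Im}\,F(i)=\nu(\R_\infty)$, the $\nu_n$ are uniformly bounded, hence have weak$*$ limit points on the compact space $\R_\infty$; any such limit point $\widetilde\nu$ generates, via \eqref{Hrep}, a Herglotz function agreeing with $\lim F_n=F$ on $\C^+$, and by the uniqueness part of the Herglotz representation $\widetilde\nu=\nu$. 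Thus the whole sequence converges. The implication (b) $\Rightarrow$ (a) is the same argument read backwards, or just the observation that for fixed $z\in\C^+$ the kernel $t\mapsto(1+tz)/(t-z)$ is continuous on $\R_\infty$, with local uniformity in $z$; I will not need this direction if I set up the cycle as above, but I may remark on it.

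The substantive content is (b) $\Rightarrow$ (c) and (d) $\Rightarrow$ (a); (c) $\Rightarrow$ (d) is trivial (it is a special case). For (b) $\Rightarrow$ (c), the key is to write, for $z\in\C^+$ and $S$ a bounded interval $(c,d)$, the harmonic measure $\omega_z(S)$ as the imaginary part of a bounded holomorphic function of $z$: explicitly $\omega_z(S)=\tfrac1\pi\,\mathrm{Im}\,\log\frac{z-d}{z-c}=\tfrac1\pi\arg\frac{z-d}{z-c}$, the argument of a Möbius image, which lies in $[0,1]$. Now I would use the layer-cake / Fubini identity
\[
\int_A \omega_{F_n(t)}(S)\,dt = \int_S \left( \int_A \frac{1}{\pi}\,\mathrm{Im}\,\frac{1}{F_n(t)-s}\cdot\frac{1}{?}\right)
\]
— more cleanly, I would invoke the formula
\[
\int_A \omega_{G(t)}(S)\,dt = \int_S \mu_{G,A}(s)\,ds ,
\]
where $\mu_{G,A}$ is the density of the pullback under $G$ of Lebesgue measure on $A$, a fact referenced in the excerpt as formula \eqref{spa}. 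Granting that (or deriving it from the fact that $\omega_z(S)=\int_S P(z,s)\,ds$ with $P$ the Poisson kernel and then applying Fubini), the statement (c) reduces to convergence of these measures, which follows from (b) together with the convergence $F_n\to F$ uniformly on compacta of $\C^+$ (which is (a), obtained from (b)) via boundary-value / dominated convergence arguments on the half plane. The honest way, and the one I would actually carry out, is to fix a bounded interval $S$, observe $\omega_z(S)=\mathrm{Im}\,h_S(z)$ for an explicit $h_S$ bounded and holomorphic on $\C^+$ with boundary values, and show $\int_A \mathrm{Im}\,h_S(F_n(t))\,dt \to \int_A \mathrm{Im}\,h_S(F(t))\,dt$; since $h_S\circ F_n \to h_S\circ F$ uniformly on compacta and all these functions are uniformly bounded Herglotz-type (bounded analytic) functions, their boundary values converge weakly against $L^1$, in particular when integrated against $\chi_A$. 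Passing from intervals $S$ to general Borel $S$ is then the measure-theoretic argument already sketched in the excerpt (both sides are measures in $S$, agreeing on intervals).

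The implication (d) $\Rightarrow$ (a) is the one I expect to be the main obstacle, since we start from very little — convergence of a double integral for rectangles only — and must recover uniform convergence on compacta. The plan is: first extract, from the boundedness hidden in (d) (taking $A$ fixed and letting $S$ exhaust $\R$ forces $\int_A \omega_{F_n(t)}(\R)\,dt=|A|$, which is automatic and gives nothing, so instead I use tightness), a locally uniformly bounded family, hence by Montel a subsequence $F_{n_k}\to G\in\mathcal H\cup\{\infty\}$; rule out the constant $\infty$ (and real constants) using that (d) with a bounded $S$ would otherwise fail; then apply the already-established (a) $\Rightarrow$ (d) to the subsequence to get that $G$ satisfies \eqref{convvd} on rectangles with the same right-hand side as $F$, whence by Lebesgue differentiation $\omega_{G(t)}(S)=\omega_{F(t)}(S)$ a.e.\ for each fixed interval $S$, hence $G(t)=F(t)$ a.e.\ on $\R$, hence $G=F$ on $\C^+$. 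Since every subsequence has a further subsequence converging to $F$, the full sequence converges to $F$ uniformly on compacta. The delicate point inside this is establishing the local boundedness of $\{F_n\}$ from (d) alone: I would argue that if $\mathrm{Im}\,F_{n_k}(z_0)\to\infty$ for some $z_0$, then Harnack forces $\mathrm{Im}\,F_{n_k}\to\infty$ locally uniformly, so the harmonic measures $\omega_{F_{n_k}(z)}(S)$ of any bounded $S$ tend to $0$ locally uniformly on $\C^+$ and, controlling boundary behaviour, $\int_A\omega_{F_{n_k}(t)}(S)\,dt\to 0$, contradicting (d) unless $\omega_{F(t)}(S)=0$ a.e.; choosing $S$ large enough that this fails (possible since $\int_A\omega_{F(t)}(\R)\,dt=|A|>0$) gives the contradiction. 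That combinatorial bookkeeping of boundary values versus interior Harnack bounds is where the real work lies; everything else is soft.
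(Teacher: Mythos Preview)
Your overall strategy is sound and, interestingly, \emph{different} from the paper's. The paper does not use your ``bounded harmonic function / weak$^*$ boundary values'' idea at all. Instead, for (a)$\Rightarrow$(c) it introduces the auxiliary Herglotz functions $F^{(y)}(z)=(1+yF(z))/(y-F(z))$ and relies on the spectral averaging identity
\[
\int_A \omega_{F(t)}(S)\,dt=\int_S \rho^{(y)}(A)\,\frac{dy}{1+y^2},
\]
together with the already-established equivalence (a)$\Leftrightarrow$(b) applied to $F_n^{(y)}\to F^{(y)}$, plus dominated convergence. For (d)$\Rightarrow$(a) the paper likewise uses this formula to show directly that the integrals tend to zero when a subsequence converges to a real or infinite constant. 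Your route through bounded harmonic functions is conceptually cleaner and avoids the $F^{(y)}$ machinery, at the price of invoking a soft functional-analytic fact.

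That said, two points in your argument need repair. First, your $h_S(z)=\frac{1}{\pi}\log\frac{z-d}{z-c}$ is \emph{not} bounded on $\C^+$: its real part blows up as $z\to c$ or $z\to d$. What you actually need is that $\omega_z(S)=\mathrm{Im}\,h_S(z)$ is bounded (it lies in $[0,1]$), so work with the bounded harmonic functions $u_n(z)=\omega_{F_n(z)}(S)$ directly rather than their holomorphic completions.

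Second---and this is the real gap---you assert without proof that local uniform convergence $u_n\to u$ on $\C^+$ of uniformly bounded harmonic functions implies weak$^*$ convergence of the boundary values $u_n^*\to u^*$ in $L^\infty(\R)$. This is true, but it is precisely ``where the real work lies,'' in your own words, and you use it both for (b)$\Rightarrow$(c) and for ruling out degenerate limits in (d)$\Rightarrow$(a). The argument: by Banach--Alaoglu, any weak$^*$ limit point $v$ of $(u_n^*)$ satisfies $\int P_z v=\lim u_n(z)=u(z)$ for every Poisson kernel $P_z\in L^1(\R)$, so the Poisson extension of $v$ is $u$, hence $v=u^*$; uniqueness of the limit point gives full convergence. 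You should include this. Also, in (d)$\Rightarrow$(a), you only explicitly treat $\mathrm{Im}\,F_{n_k}\to\infty$; the case $F_{n_k}\to a\in\R$ needs you to choose a bounded interval $S$ with $a\notin\overline{S}$ and $\int_A\omega_{F(t)}(S)\,dt>0$, which is possible since a Herglotz $F$ cannot have $F(t)=a$ on a set of positive measure.
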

The implication (a) $\Longrightarrow$ (c) is an abstract version of
results from \cite{P}.
\begin{proof}
It is well known that (a) and (b) are equivalent, but we sketch the argument anyway: Observe
first of all that $F(i) =a+i\nu(\bbR_{\infty})$, so if (a) holds, then $a_n\to a$ and
the $\nu_n$ form a bounded sequence in $\mathcal M (\bbR_{\infty})$. By the Banach-Alaoglu Theorem,
we can extract a weak $*$ convergent subsequence $\nu_{n_j}\to\mu$.
We can then pass to the limit
in the Herglotz representations \eqref{Hrep} of the $F_{n_j}$
and use the uniqueness of such representations
to conclude that $\mu=\nu$. In particular, this is the only possible limit point of
the $\nu_n$ and thus it was not necessary to pass to a subsequence.

Conversely, if (b) holds, just pass to the limit in the Herglotz representations of the
$F_n$ to confirm pointwise convergence. A normal families argument then improves this to
locally uniform convergence, which is what (a) claims.

We now prove that (a) implies (c), following \cite{P}. A different proof could be based
on Lemma \ref{LA.1} below.

Given $F\in \mathcal H$, let
\[
F^{(y)}(z) = \frac{1+yF(z)}{y-F(z)} \quad\quad\quad (y\in\bbR_{\infty}) .
\]
It's easy to check that $F^{(y)}\in\mathcal H$; in fact,
\begin{equation}
\label{2.1}
\textrm{Im }F^{(y)}(z) = (1+y^2)\frac{\textrm{Im }F(z)}{|y-F(z)|^2} .
\end{equation}
The following formula (``spectral averaging'') will be crucial: If $A,S\subset\bbR$
are Borel sets, $|A|<\infty$, then
\begin{equation}
\label{spa}
\int_A \omega_{F(t)}(S)\, dt = \int_S \rho^{(y)}(A)\, \frac{dy}{1+y^2} .
\end{equation}
Here, $d\rho^{(y)}(t) = (1+t^2)\chi_{\bbR}(t)\, d\nu^{(y)}(t)$, and $\nu^{(y)}$ is the measure
from the Herglotz representation of $F^{(y)}$. See \cite[Theorem 1]{P}.

As a final preparation for the proof of (a) $\Longrightarrow$ (c), we now establish
part of the implication (d) $\Longrightarrow$ (c). Namely, we claim that if \eqref{convvd}
holds for all $A=(a,b)$ and fixed $S$, then it holds for \textit{all} Borel sets $A$ of finite
Lebesgue measure (and the same $S$). Let us prove this now:
fix $S$, and to simplify the notation, abbreviate
$\omega_{F_n(t)}(S) = \omega_n$, $\omega_{F(t)}(S) = \omega$. Suppose that \eqref{convvd}
holds for all $A=I=(a,b)$. Then, if we are given disjoint intervals $I_j$ with
$\left| \bigcup I_j \right| < \infty$, then, by dominated and monotone convergence,
\[
\int_{\bigcup I_j} \omega_n\, dt = \sum_j \int_{I_j} \omega_n \, dt \to
\sum_j \int_{I_j} \omega \, dt = \int_{\bigcup I_j} \omega\, dt
\]
(because $0\le \omega_n \le 1$, thus $0\le \int_{I_j} \omega_n \, dt \le |I_j|$, and
$\sum |I_j| < \infty$).

If now $A$ is an arbitrary Borel set of finite measure and $\epsilon>0$, we can find disjoint open intervals
$I_j$ (using the regularity of Lebesgue measure) so that
\[
A\subset \bigcup I_j, \quad\quad \left| \bigcup I_j \setminus A \right| < \epsilon .
\]
Then
\begin{gather*}
\int_{\bigcup I_j} \omega_n\, dt - \epsilon <
\int_A \omega_n\, dt \le \int_{\bigcup I_j} \omega_n \, dt , \\
\int_A \omega\, dt \le \int_{\bigcup I_j} \omega\, dt < \int_A \omega\, dt + \epsilon .
\end{gather*}
As noted above,
\[
\int_{\bigcup I_j} \omega_n\, dt \to \int_{\bigcup I_j} \omega\, dt ,
\]
so, putting things together, we see that $\liminf, \limsup \int_A \omega_n\, dt$ both differ from
$\int_A \omega\, dt$ by at most $\epsilon$, but $\epsilon>0$ was arbitrary, so we obtain that
\[
\int_A \omega_n \, dt \to \int_A \omega\, dt ,
\]
as desired.

Thus, returning to the proof of (a) $\Longrightarrow$ (c) now, we may assume that
$A=(a,b)$. Let
$R=\max (|a|,|b|)$. Then
\begin{equation}
\label{2.4}
\rho^{(y)}(A) \le (1+R^2) \nu^{(y)}(A) \le
(1+R^2) \nu^{(y)}(\bbR_{\infty}) = (1+R^2) \textrm{Im }F^{(y)}(i) .
\end{equation}
With $F$ replaced by $F_n$, this identity together with \eqref{2.1} show that
\begin{equation}
\label{2.2}
\rho_n^{(y)}(A) \le C \quad\quad (n\in\N, y\in\bbR_{\infty})
\end{equation}
(because $F_n(i)\to F(i)$, $\textrm{Im }F(i)>0$).

Since also $F_n^{(y)}\to F^{(y)}$, locally uniformly, we have the weak $*$ convergence of the
measures by the equivalence of (a) and (b) and thus
\begin{equation}
\label{2.3}
\rho_n^{(y)}(A)\to \rho^{(y)}(A) ,
\end{equation}
except possibly for two values of $y$ (for those values of $y$ for which $\rho^{(y)}(\{ a, b \}) \not= 0$).
Now we can use \eqref{spa} with $F$ replaced by $F_n$;
\eqref{2.2}, \eqref{2.3} show that the hypotheses of the dominated convergence theorem are satisfied,
so we can pass to the limit on the right-hand sides.

Finally, we show that (d) implies (a). This can be done very conveniently
using just compactness and uniqueness. More specifically,
pick a subsequence (denoted by $F_n$ again, to keep the notation
manageable) that converges locally uniformly to $G$ (possible by normal families). Here, either
$G\in\mathcal H$, or else $G\equiv a\in\bbR_{\infty}$. Actually, only the first case can occur here:
If, for instance, $F_n\to a\in\bbR$, then \eqref{2.1}, \eqref{2.4} show that for every $R>0$,
\[
\rho_n^{(y)}([-R,R])\to 0 \quad\quad (n\to \infty) ,
\]
uniformly in $|y-a|\ge \delta >0$. Therefore, by \eqref{spa},
\[
\int_{-R}^R \omega_{F_n(t)}((a-R,a-\delta)\cup(a+\delta,a+R))\, dt \to 0 \quad\quad (n\to\infty) .
\]
By hypothesis, we then also must have that
\[
\omega_{F(t)}((a-R,a-\delta)\cup(a+\delta,a+R))=0
\]
for almost every $t\in (-R,R)$. This is clearly not possible if $F(t)\equiv\lim F(t+iy)\in\C^+$,
and if $F(t)$ exists and is real, then, since $R,\delta>0$ are arbitrary, it follows that
$F(t)=a$. In other words, $F(t)=a$ almost everywhere,
but this is not a possible boundary value of an $F\in\mathcal H$.

A similar argument rules out the case where $|F_n|\to\infty$. In fact, we can also work with $G_n=-1/F_n$
and then run the exact same argument again.

Thus $F_n\to G\in\mathcal H$, uniformly on compact sets. But then, by the already established
implication (a) $\Longrightarrow$ (c), $F_n\to G$
in value distribution, and since such a limit
is unique, $G=F$. (What we actually use
here is the statement that $F_n$ can have at most one limit in the sense of (d), but the argument given
above, in the paragraph preceding Theorem \ref{T2.1}, establishes exactly this.)
Now every subsequence
of $\{ F_n \}$ has a locally uniformly convergent sub-subsequence, but, as we just saw, the corresponding
limit can only be $F$, so in fact $F_n\to F$ locally uniformly, without the need of passing to a
subsequence, and this is what (a) claims.
\end{proof}
A common thread in this proof was the following: because of compactness properties operating in the
background, convergence conditions are often self-improving. We give three more examples for this theme,
which can be extracted from the preceding proof.
\begin{Proposition}
\label{P2.2}
Let $F_n, F\in\mathcal H$.
The following conditions are also equivalent to the statements from Theorem \ref{T2.1}:\\
(a) $\lim_{n\to\infty} F_n(z_j) = F(z_j)$ on a set
$\{ z_j \}_{j=1}^{\infty}$ with a limit point in $\C^+$;\\
(b) There exists a Borel set $B\subset\bbR$ of finite positive Lebesgue measure so that for all Borel sets
$A\subset B$ and all bounded open intervals $J=(c,d)$, we have that
\[
\lim_{n\to\infty} \int_A \omega_{F_n(t)}(J)\, dt = \int_A \omega_{F(t)}(J)\, dt .
\]
(c) $F_n\to F$ in value distribution, and the convergence in \eqref{convvd} is uniform in $S$.
\end{Proposition}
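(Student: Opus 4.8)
The plan is to show each of (a), (b), (c) equivalent to condition (a) of Theorem \ref{T2.1}, i.e.\ to local uniform convergence $F_n\to F$ on $\C^+$. I will use repeatedly two facts already exploited in the proof of Theorem \ref{T2.1} — that a Herglotz function is never constant and is determined by its boundary values on any set of positive measure — together with the dichotomy established there that a locally uniform limit of functions in $\mathcal H$ is again in $\mathcal H$ or is a constant in $\R_\infty$, and of course the spectral averaging formula \eqref{spa} and the bounds \eqref{2.1}, \eqref{2.2}, \eqref{2.4}. For (a), the nontrivial direction is a normal families argument: $\{F_n\}$ is a normal family, so pass to a locally uniformly convergent subsequence $F_n\to G$. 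Since $F_n(z_j)\to F(z_j)$, the constant alternative $G\equiv a\in\R_\infty$ would force $F(z_j)=a$, which is impossible as $F(z_j)\in\C^+$; hence $G\in\mathcal H$, and $G=F$ on $\{z_j\}$, which has an accumulation point in $\C^+$, so $G=F$ by the identity theorem. As every locally uniformly convergent subsequence has limit $F$, we get $F_n\to F$ locally uniformly.

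For (c), only ``Theorem \ref{T2.1} $\Rightarrow$ (c)'' needs an argument, so assume $F_n\to F$ locally uniformly. If $A=(p,q)$ is a bounded interval, then \eqref{spa} gives
\[
\sup_S\left| \int_A \omega_{F_n(t)}(S)\,dt - \int_A \omega_{F(t)}(S)\,dt \right|
\le \int_{\R_\infty} \bigl| \rho_n^{(y)}(A) - \rho^{(y)}(A) \bigr| \,\frac{dy}{1+y^2} ,
\]
and the right-hand side tends to $0$ by dominated convergence: by \eqref{2.2} the integrand is bounded by a constant (while $\int_{\R_\infty}\frac{dy}{1+y^2}<\infty$), and by \eqref{2.3} it tends to $0$ for all but two values of $y$. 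This is the uniform-in-$S$ statement for bounded intervals $A$; the passage to an arbitrary Borel set $A$ with $|A|<\infty$ is the interval-approximation argument from the proof of Theorem \ref{T2.1}, which is uniform in $S$ because each $\int_{I_j}\omega_n\,dt$ is at most $|I_j|$ and $\sum|I_j|<\infty$. Hence (c) holds.

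For (b), that condition (c) of Theorem \ref{T2.1} implies (b) is immediate, since any $A\subset B$ has finite measure. For the converse we mimic the proof of ``(d) $\Rightarrow$ (a)'' of Theorem \ref{T2.1}: pass to a locally uniformly convergent subsequence $F_n\to G$; we must rule out $G\equiv a\in\R_\infty$. Suppose first $a\in\R$. Choose $R$ so large that $A_R:=B\cap[-R,R]$ has positive measure, and let $J$ be any bounded open interval with $a\notin\overline J$. Using \eqref{spa}, \eqref{2.1}, \eqref{2.4} (with $F$ replaced by $F_n$) and the facts that $F_n(i)\to a$, so $\textrm{Im}\,F_n(i)\to 0$, while $|y-F_n(i)|$ stays bounded away from $0$ for $y\in J$ and large $n$, one gets
\[
\int_{A_R}\omega_{F_n(t)}(J)\,dt=\int_J\rho_n^{(y)}(A_R)\,\frac{dy}{1+y^2}\longrightarrow 0 .
\]
By (b) the limit equals $\int_{A_R}\omega_{F(t)}(J)\,dt$, so $\omega_{F(t)}(J)=0$ for a.e.\ $t\in A_R$. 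Letting $R$ run through the integers and $J$ through the rational bounded open intervals with $a\notin\overline J$, we find that for a.e.\ $t\in B$ the measure $\omega_{F(t)}$ puts no mass on $\R\setminus\{a\}$; this is impossible when $\textrm{Im}\,F(t)>0$ and forces $F(t)=a$ when $F(t)$ is real, so $F(t)=a$ for a.e.\ $t\in B$, contradicting non-constancy of $F$. The case $a=\infty$ reduces to this one on replacing $F_n$ by $-1/F_n$: these are again Herglotz, converge locally uniformly to $0$, and inherit (b) for intervals $J$ with $0\notin\overline J$, because $z\mapsto -1/z$ is an automorphism of $\C^+$ preserving harmonic measure and carrying such intervals to bounded open intervals avoiding $0$.

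Therefore $F_n\to G\in\mathcal H$ locally uniformly along the subsequence. By the implication (a) $\Rightarrow$ (c) of Theorem \ref{T2.1}, $F_n\to G$ in value distribution; comparing with (b) and applying Lebesgue's differentiation theorem (with $J$ running over rational intervals, as in the uniqueness remark preceding Theorem \ref{T2.1}) yields $G(t)=F(t)$ for a.e.\ $t\in B$, hence $G=F$. Since every subsequence of $\{F_n\}$ thus has a further subsequence converging locally uniformly to $F$, we conclude $F_n\to F$ locally uniformly, which is (a) of Theorem \ref{T2.1}. I expect the main obstacle to be precisely this last direction — excluding the degenerate subsequential limits while restricted to the small test family — and in particular the two wrinkles of a possibly unbounded $B$ (handled by the truncation $A_R$) and the limit value $a=\infty$ (handled via $-1/F_n$ and conformal invariance of harmonic measure).
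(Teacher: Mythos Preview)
Your proof is correct and follows essentially the same approach as the paper's own (very brief) sketch: for (a), normal families plus the identity theorem; for (c), the spectral averaging formula \eqref{spa} together with the bounds \eqref{2.2}, \eqref{2.4} to get uniformity in $S$ on intervals, then the interval-approximation step; and for (b), the compactness-plus-uniqueness argument modeled on the proof of (d) $\Rightarrow$ (a) of Theorem \ref{T2.1}. Your handling of the two technical wrinkles in (b)---truncating $B$ to $A_R$ so that \eqref{2.4} applies, and reducing $a=\infty$ to $a=0$ via $-1/F_n$ and conformal invariance of harmonic measure---is exactly right and fills in what the paper only gestures at.
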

\begin{proof}[Sketch of proof]
(a) Use the compactness property of $\mathcal H \cup \{ F\equiv a: a\in\R_{\infty} \}$
(normal families again!).

(b) Use compactness and recall that Herglotz
functions are uniquely determined by their boundary values on a set of positive Lebesgue measure.

(c) Return to the proof of Theorem \ref{T2.1}. The argument based on \eqref{2.4}, \eqref{2.2} shows
that the convergence is uniform in $S$ at least for $A=(a,b)$. Now we can again approximate
$A$ by disjoint unions of intervals, and this lets us extend the statement to arbitrary Borel sets $A$.
\end{proof}
\section{Proof of Proposition \ref{P1.1} and Theorem \ref{TBP}}
\begin{proof}[Proof of Proposition \ref{P1.1}]
These statements summarize some standard facts about $\omega$ limit sets;
see, for example, \cite{Kur,Wal} for background information. Extend $V\in\mathcal V_+^C$
to a whole line potential $V\in\mathcal V^C$, for example by letting $V(n)=0$
(Jacobi case: $V(n)=(a(n),b(n))=(1,0)$) for $n\le 0$. Then the representation
\[
\omega(V) = \bigcap_{m\ge 1} \overline{ \{S^n V : n\ge m \} }
\]
is valid, and this exhibits $\omega(V)$ as an intersection of a decreasing sequence
of compact sets. Thus $\omega(V)$ is non-empty and compact. It is also clear that
$\omega(V)$ is invariant under $S$ and $S^{-1}$, so $S$, restricted to $\omega(V)$, is
a homeomorphism. If the final claim of Proposition \ref{P1.1} were wrong, there would be
a subsequence $n_j\to\infty$ so that $d(S^{n_j}V, W)\ge\epsilon >0$ for all $j$
and all $W\in\omega(V)$. But by
compactness, $S^{n_j}V$ must approach a limit on a sub-subsequence, and this limit must
lie in $\omega(V)$. This is a contradiction.
\end{proof}
We now turn to proving Theorem \ref{TBP}. As already explained, this is a reformulation
of \cite[Theorem 1]{BP1}, so I state this result first. We follow \cite{BP1} and treat
half line problems (however, as we'll discuss, an analogous result for whole line problems
is also valid, and in fact this may be the more natural version because of the greater symmetry
of its setup). Given coefficients on $\Z_+$, we will cut this half line into two smaller intervals
at a variable point $n$. We then denote the $m$ functions
of the problems on $\{ 1,2, \ldots, n\}$ and $\{ n+1, n+2,\ldots \}$ by $m_-(n,\cdot)$ and
$m_+(n,\cdot)$, respectively; precise definitions will be given in a moment.
\begin{Theorem}[Breimesser-Pearson \cite{BP1}]
\label{TBPorig}
Consider a (half line) Jacobi matrix $J$ with bounded coefficients. For all
Borel sets $A\subset \Sigma_{ac}$, $S\subset\bbR$, we have that
\[
\lim_{n\to\infty} \left(
\int_A \omega_{m_-(n,t)}(-S)\, dt - \int_A \omega_{m_+(n,t)}(S)\, dt \right) = 0 .
\]
Moreover, the convergence is uniform in $S$.
\end{Theorem}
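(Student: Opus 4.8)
The plan is to recast the statement in terms of transfer matrices and then exploit the conformal invariance of harmonic measure, reducing everything to a comparison of the value distributions of two solutions of the difference equation. Write $T(n,z)$ for the (appropriately normalized) cumulative transfer matrix of $J$ from site $0$ to site $n$; for (Lebesgue) almost every real $t$, $T(n,t)$ is then a real, orientation-preserving M\"obius automorphism of $\mathbb{C}^+$ (and of $\mathbb{R}_\infty$). Propagating the Weyl solution that is $\ell_2$ at $+\infty$ exhibits $m_+(n,z)$ as the image under $T(n,z)$ of the full half line Weyl $m$ function $M(z):=m_+(0,z)$, while propagating the solution obeying the Dirichlet condition at $0$ exhibits $m_-(n,z)$ as the image under (a reversed, bookkept version of) $T(n,z)$ of a fixed point $r_0\in\mathbb{R}_\infty$, the left datum. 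The identity \eqref{1.11}, together with its analogue for the one-step Jacobi matrices, is precisely the statement that interchanging the two endpoints --- equivalently, passing to the reversed coefficient sequence --- corresponds to conjugation by $J=\mathrm{diag}(1,-1)$, which acts projectively as the reflection $w\mapsto -w$; this is the structural reason the theorem pairs $\omega_{m_+(n,\cdot)}(S)$ with $\omega_{m_-(n,\cdot)}(-S)$ rather than with $\omega_{m_-(n,\cdot)}(S)$. Two elementary facts will be used repeatedly: $m_-(n,t)\in\mathbb{R}$ for a.e.\ $t$, since $J_{[1,n]}$ is a finite matrix, whereas $\operatorname{Im} m_+(n,t)>0$ for a.e.\ $t\in\Sigma_{ac}$, since deleting finitely many sites does not change the absolutely continuous spectrum.

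Next I would use the pullback behaviour of harmonic measure under automorphisms of $\mathbb{C}^+$, namely $\omega_{\gamma\langle w\rangle}(S)=\omega_w(\gamma^{-1}S)$ for $w\in\mathbb{C}^+$ --- and, via the limiting definition of $\omega_{G(t)}$, also for $w\in\mathbb{R}_\infty$ --- to transport both integrands in the theorem back along $T(n,t)$. Using \eqref{1.11} to absorb the reflection, the quantity in the theorem becomes an expression of the form
\[
\int_A \Bigl( \omega_{r_0}\bigl(E_n(t)\bigr) - \omega_{M(t)}\bigl(E_n(t)\bigr) \Bigr)\, dt ,
\]
where $r_0$ is the fixed left datum and $E_n(t):=T(n,t)^{-1}S$ is the pulled-back target (the two pullbacks agree up to bookkeeping that is harmless on the ``good'' set identified below). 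Everything is thus reduced to a statement about the \emph{single} Herglotz function $M$: on the orbit of target sets $E_n(t)$, the harmonically smeared value $M(t)\in\mathbb{C}^+$ should become indistinguishable, after averaging over $t\in A$, from the fixed real reference point $r_0$; here $\omega_{r_0}(E_n(t))$ is essentially the indicator $\chi_{E_n(t)}(r_0)$ by the discussion around \eqref{2.8}.

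The heart of the matter --- and the step I expect to be the main obstacle --- is to establish this last convergence, and this is exactly where the hypothesis $A\subset\Sigma_{ac}$ is genuinely needed; it is the real content of Breimesser--Pearson. The mechanism is that on $\Sigma_{ac}$ there is no subordinate solution, so the maps $T(n,t)$ do not collapse $\mathbb{C}^+$ onto a boundary point, and the positivity of $\operatorname{Im} M$ on $A$ supplies, through a Kotani-type (conservation-of-a.c.-spectrum) estimate, uniform control on the measure of the bad set of energies $t\in A$ at which the orbit $T(n,t)$ degenerates. On the complementary good set the two integrands in the display are uniformly comparable, and a Vitali/Lebesgue-density covering argument then forces the difference to vanish; one first runs this for bounded open intervals $S$ and upgrades to arbitrary Borel $S$ exactly as in the proof of Theorem \ref{T2.1}. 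Uniformity in $S$ comes for free, since $S$ enters only through the target $E_n(t)$ and none of the estimates used (the compactness bound on the good set, the covering, the density argument) is sensitive to which Borel set produced that target --- compare Proposition \ref{P2.2}(c) and \eqref{2.8}. The points I anticipate as genuinely delicate are the quantitative control of the exceptional set of energies (the hard analytic core of the argument) and the careful treatment of boundary values in the pullback identity, since $M(t)$ may be real on part of $A$ and the two pullbacks must be reconciled off the good set.
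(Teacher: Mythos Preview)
Your structural reduction is essentially the one the paper uses: pull back (or equivalently, as the paper does, push forward) along the transfer matrices, which for real $t$ are automorphisms of $\C^+$, and use the conjugation identity \eqref{1.11}/\eqref{Abasic} to reconcile the $P_+$ and $P_-$ evolutions. That part is fine, and with $r_0=\infty$, $E_n(t)=P_+(n,t)^{-1}S$ your displayed difference is exactly the quantity in the theorem.

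The gap is in the analytical core, which you yourself flag as ``the main obstacle''. Your proposed mechanism (no subordinate solution on $\Sigma_{ac}$, a Kotani-type bound on a bad set, then ``the two integrands are uniformly comparable'' on the good set) does not work as stated, and is not what the paper does. The difficulty is that for real $t$ the maps $P_\pm(n,t)$ are \emph{isometries} of hyperbolic space, so nothing focuses or spreads on the real line; the integrands $\omega_{\infty}(E_n(t))$ (a $0$--$1$ function) and $\omega_{M(t)}(E_n(t))$ (a genuine harmonic measure from a point in $\C^+$) are not pointwise comparable on any ``good'' set, and subordinacy theory gives you no handle on this. What actually makes the argument go is a combination of three ingredients you do not mention: (i) a step-function approximation of $m_+(t)$ by finitely many \emph{constants} $m_j\in\C^+$ on pieces $A_j$ (this is the only place $A\subset\Sigma_{ac}$, i.e.\ $m_+(t)\in\C^+$, is used); (ii) Lemma~\ref{LA.1}, which lets you replace $t$ by $t+iy$ in $\int_{A_j}\omega_{F(t)}(-S)\,dt$ \emph{uniformly} in the Herglotz function $F$ and in $S$; and (iii) the exponential hyperbolic contraction of $P_-(n,z)$ for $z\in\C^+$ (Corollary~\ref{CA.1}), which forces $P_-(n,t+iy)(-\overline{m_j})$ and $m_-(n,t+iy)=P_-(n,t+iy)\infty$ together as $n\to\infty$. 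Step (ii) is the device that moves you off the real line so that the focusing in (iii) becomes available; without it the argument stalls. Your covering/density sketch does not supply a substitute for these steps.
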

The \textit{moreover} part is not explicitly stated in \cite{BP1}, but, as we will show
in the appendix, it does follow from the proof that is given. It is probably quite useless anyway.

We now summarize some basic facts about the $m$ functions $m_{\pm}$, for a quick orientation.
Please see also Appendix~A for a more elegant treatment using linear fractional transformations.

The definition goes as follows: For $z\in\C^+$, let $f_{\pm}(\cdot , z)$ be solutions of
\begin{equation}
\label{je}
a(n)f(n+1)+a(n-1)f(n-1)+b(n)f(n)=zf(n)
\end{equation}
that are in the domain of $J$ near the right (respectively, left) endpoint.
More precisely, we demand that
\[
a(0)f_-(0,z)=0 , \quad f_+(\cdot, z) \in \ell_2(\Z_+) .
\]
These conditions determine $f_{\pm}$ up to multiplicative constants. We then define
\begin{equation}
\label{defm}
m_-(n,z) = \frac{f_-(n+1,z)}{a(n)f_-(n,z)}, \quad m_+(n,z) = -\frac{f_+(n+1,z)}{a(n)f_+(n,z)} .
\end{equation}
The lack of symmetry between $f_-$ and $f_+$ comes from the fact that we are considering half line problems,
and if a half line is cut into two parts, we obtain another half line and a finite interval
(not two half lines).
We could in fact pass to a more symmetric formulation of Theorem \ref{TBPorig} very easily:
we would then extend the coefficients to $\Z$ (for example, by putting $a(n)=1$, $b(n)=0$ for $n\le 0$)
and work with $f_- \in \ell_2(\Z_-)$ instead of the $f_-$ defined above. Theorem \ref{TBPorig}
holds in this situation as well, with an almost identical proof.

Let me repeat one important point just made:
we can also define $m$ functions $m_{\pm}$ for whole line operators if we
make the adjustment mentioned in the preceding paragraph: $f_-$ is now defined
by requiring that $f_-(\cdot, z)\in\ell_2(\Z_-)$.
In particular, the $m$ functions of a reflectionless whole line potential, as in Definition \ref{D1.1}, are
defined in this way, with $n=0$.
We will soon have occasion to apply these remarks again, when we prove Theorem \ref{TBP}.

The definition of $m_{\pm}$
shows that $m_+(n,\cdot)$ only depends on $a(j)$, $b(j)$ for $j>n$, while
$m_-(n,\cdot)$ only depends on the coefficients for $1\le j\le n$. So these functions refer to
disjoint subsets of $\Z_+$, and this observation immediately gives Theorem \ref{TBPorig}
a somewhat paradoxical flavor.

The functions $m_{\pm}(n, \cdot)$ are Herglotz functions, and they can be used in the usual
way to construct spectral representations.
Namely, we have that
\[
m_+(n,z) = \langle \delta_{n+1}, (J_n^+ - z)^{-1} \delta_{n+1} \rangle ,
\]
where $J_n^+$ is the Jacobi matrix, restricted to $\ell_2(\{ n+1, n+2,\ldots \} )$, and
$\delta_j$ denotes the unit vector located at $j$: $\delta_j(j)=1$, $\delta_j(k)=0$ for $k\not= j$.
This follows quickly by observing that
\[
f(j,z) = \langle \delta_j , (J_n^+ - z)^{-1} \delta_{n+1} \rangle
\]
solves \eqref{je} for $j> n+1$ and lies in $\ell_2$ and thus must be a multiple of $f_+(j,z)$ for
$j\ge n+1$. The Herglotz representation of $m_+(n,\cdot)$ thus reads
\begin{equation}
\label{3.2}
m_+(n,z) = \int_{-\infty}^{\infty} \frac{d\rho_n^+(t)}{t-z} ,
\end{equation}
where $d\rho_n^+$ is the spectral measure of $J_n^+$ and $\delta_{n+1}$.

A similar discussion applies to $m_-(n,\cdot)$. Note, however, that $m_-$ is not just the
mirror version of $m_+$: swapping left and right means that $n$ and $n+1$ should also change roles,
but there is no such change in \eqref{defm}. Rather, we have the following substitute for \eqref{3.2}:
\[
m_-(n,z) = \frac{z-b(n)}{a(n)^2} + \frac{a(n-1)^2}{a(n)^2}
\int_{-\infty}^{\infty} \frac{d\rho_n^-(t)}{t-z}
\]
Here, $d\rho_n^-$ is the spectral measure of the
restriction of $J$ to $\{ 1, \ldots, n-1 \}$ and $\delta_{n-1}$.
Put differently, $m_-(n, \cdot)$ is the $m$ function of the problem on $\{ 1, \ldots, n \}$
(but with the usual
roles of left and right interchanged) with Neumann boundary conditions at $n$ ($f(n)=0$) and the usual
(Dirichlet) boundary conditions at $1$ ($f(0)=0$).
See \cite[Chapter 2]{Teschl} for a more complete treatment of these issues (two warnings are in order:
what we called $m_{\pm}$ above is denoted by
$\widetilde{m}_{\pm}$ in \cite{Teschl}, and $b(n)$ in formula (2.15)
of \cite{Teschl} should read $b(0)$).

We will give a detailed proof of Theorem \ref{TBPorig} in Appendix A. Let us now show that
Theorem \ref{TBP} indeed follows from this result.

If $W\in\mathcal V^C$ is a whole line potential
(\textit{Jacobi reader: }recall that this term may well refer
to the coefficients of a whole line Jacobi operator),
we write $W_{\pm}$ for the restrictions of $W$ to $\Z_{\pm}$, and, as above, we denote
the set of these restrictions by
\[
\mathcal V_{\pm}^C = \left\{ W_{\pm}: W\in\mathcal V^C \right\} .
\]
As a final preparation, we recall a basic continuity property.
\begin{Lemma}
\label{L4.1}
The maps
\[
\mathcal V_{\pm}^C \to \mathcal H , \quad\quad W_{\pm} \mapsto M_{\pm} = m_{\pm}^W(0,\cdot )
\]
are homeomorphisms onto their images. (On $\mathcal H$, we use the topology
of uniform convergence on compact sets, or one of the equivalent descriptions
of this topology, as in Theorem \ref{T2.1}; note that this space is metrizable.)
\end{Lemma}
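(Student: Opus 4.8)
The plan is to reduce the statement to two soft facts—continuity and injectivity—and then let a compactness argument supply the rest. First I would note that $\mathcal V_\pm^C$, being a product of compact intervals, is compact, while $\mathcal H$ is metrizable and in particular Hausdorff. A continuous injection from a compact space into a Hausdorff space is automatically a homeomorphism onto its image, since it sends closed (hence compact) sets to compact (hence closed) sets. So it suffices to check that each of the maps $W_\pm\mapsto M_\pm=m_\pm^W(0,\cdot)$ is well defined, continuous, and injective. Well-definedness is immediate from the remark already made that $m_+(0,\cdot)$ depends only on the coefficients on $\Z_+$ and $m_-(0,\cdot)$ only on those on $\Z_-$.

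For injectivity I would invoke classical inverse spectral theory for half line Jacobi matrices (coefficient stripping). In the $+$ case, $M_+$ has the Herglotz representation $M_+(z)=\int d\rho^+(t)/(t-z)$ with $\rho^+$ the spectral measure of the half line operator on $\Z_+$ and $\delta_1$; by Stieltjes inversion $M_+$ determines $\rho^+$, and then the asymptotic expansion of $M_+$ at $z=\infty$ recovers $b(1)$ and $a(1)^2$ (hence $a(1)>0$), while the three term recurrence expresses $m_+^W(1,\cdot)$ as an explicit linear fractional transform of $M_+$; iterating recovers all of $W_+$. In the $-$ case one argues in the same way using the formula for $m_-(0,\cdot)$ recorded in Section 3: the leading term at $z=\infty$ gives $a(0)$, the constant term gives $b(0)$, the $1/z$ term gives $a(-1)^2$, and subtracting these off and renormalizing produces a Herglotz function of the same type attached to the problem on $\{\dots,-2,-1\}$, after which one recurses leftward. (See \cite[Chapter 2]{Teschl}.) Hence both maps are injective.

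The real content is continuity, which I would establish through the Weyl disk picture. Suppose $W_\pm^{(k)}\to W_\pm$ in the product topology; I claim $M_\pm^{(k)}(z)\to M_\pm(z)$ for every $z\in\C^+$. Fix $z$ and, in the $+$ case, let $D_n^{(k)}(z)$ and $D_n(z)$ be the Weyl disks of the truncations to $\{1,\dots,n\}$ for the coefficients $W_+^{(k)}$ and $W_+$, respectively. One has $M_+^{(k)}(z)\in D_n^{(k)}(z)$ and $M_+(z)\in D_n(z)$ for all $n$; since bounded Jacobi matrices are in the limit point case, the nested disks $D_n(z)$ shrink to the point $M_+(z)$, so $\operatorname{diam}D_n(z)\to 0$ as $n\to\infty$; and for each fixed $n$ the disk $D_n^{(k)}(z)$ depends continuously (indeed rationally) on the finitely many coefficients $a(1),\dots,a(n),b(1),\dots,b(n)$, so $D_n^{(k)}(z)\to D_n(z)$ in Hausdorff distance as $k\to\infty$. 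An $\epsilon/3$ argument combining these two facts yields $M_+^{(k)}(z)\to M_+(z)$. (Equivalently, the truncated $m$ functions converge to $M_+$ uniformly over $W_+\in\mathcal V_+^C$ on compact subsets of $\C^+$, exhibiting $W_+\mapsto M_+$ as a uniform limit of the manifestly continuous maps $W_+\mapsto m_+^{(n),W}$.) Once we know $M_\pm^{(k)},M_\pm\in\mathcal H$ and that $M_\pm^{(k)}\to M_\pm$ pointwise on a set with a limit point in $\C^+$, Proposition \ref{P2.2}(a) upgrades this to convergence in $\mathcal H$, which is the asserted continuity; the $m_-$ case is the mirror image. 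The main obstacle is precisely this Weyl disk estimate—controlling how fast, and how uniformly in the coefficients, the $m$ function of a truncated problem stabilizes; by contrast the inverse-spectral input is classical and the final passage to ``homeomorphism onto the image'' is free, courtesy of compactness.
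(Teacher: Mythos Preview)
Your proposal is correct and follows essentially the same route as the paper: you reduce to continuity plus injectivity via the compact-to-Hausdorff argument, establish injectivity through inverse spectral theory (coefficient stripping), and prove continuity via the Weyl disk picture with radii shrinking to zero in the limit point case. The paper's proof is a terser sketch of exactly these ingredients (with an alternative continuity argument via moments mentioned in passing), so there is no substantive difference.
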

\begin{proof}
This is folk wisdom and can be seen in many ways. We will therefore only provide
sketches of possible arguments.

The correspondence $W_+ \leftrightarrow M_+$ is
one-to-one (of course, this also holds for $\Z_-$, but we will explicitly discuss
only the right half line here). See, for example, \cite{Simmom,Teschl}.

The continuity of the map $W_+\mapsto M_+$ can be conveniently deduced from the
basic constructions of Weyl theory. (Sketch: The coefficients on an initial interval
$\{1, \ldots, N\}$ determine a Weyl disk for
every fixed $z\in\C^+$, and $M_+(z)$ lies in that disk. Center and radii
of these disks depend continuously on
$W(1),\ldots, W(N)$, and the radii go to zero as $N\to\infty$, locally
uniformly on $\C^+$ because
we assumed that $W\in\mathcal V^C$ and this implies limit point case at $\infty$.)

Alternatively, one can use moments, as follows: The moments $\mu_j=\int x^j\, d\rho_+(x)$ for
$0\le j\le 2N$ are continuous functions of $W(1), \ldots , W(N)$ and by \eqref{3.2},
\[
M_+(w^{-1}) = - \sum_{j=0}^{\infty} \mu_j w^{j+1}
\]
on a disk about $w=0$.
These two facts readily imply that $M_+$ depends continuously on $W_+\in\mathcal V_+^C$.

Continuity of the inverse map $M_+\mapsto W_+$ is now actually automatic
(an invertible continuous map between compact metric spaces has a continuous inverse).

On top of that, it's also easy to give an honest proof of the continuity of the map
$M_+\mapsto W_+$. For instance,
one can argue as follows: If $M_+^{(j)}$ and $M_+$ are the
$m$ functions of certain potentials $W_+^{(j)}, W_+\in\mathcal V_+^C$,
and $M_+^{(j)}\to M_+$, uniformly on compact subsets of $\C^+$, then, by Theorem \ref{T2.1}, the
spectral measures $\rho_+^{(j)}$ converge to $\rho_+$ in weak $*$ sense.
Since we are dealing with coefficients satisfying uniform bounds,
the supports are all contained in a fixed bounded set $[-R,R]$. It follows that the moments
$\int x^n \, d\rho_+^{(j)}(x)$ converge, too, and this implies convergence of the coefficients $W_+^{(j)}$
(for example, because there are explicit formulae that recover these coefficients from the moments;
see \cite{Simmom} or \cite[Sect.\ 2.5]{Teschl}).
\end{proof}
\begin{proof}[Proof of Theorem \ref{TBP}]
Let $W\in\omega(V)$ (in particular, $W$ is a whole line potential).
Then there exists $n_j\to\infty$ so that $d(S^{n_j}V,W)\to 0$.
By Lemma \ref{L4.1}, we then have that
\[
m_{\pm}(n_j,z) \to M_{\pm}(z) \quad\quad (j\to\infty) ,
\]
uniformly on compact subsets of $\C^+$. Here, $M_{\pm}(z)=m_{\pm}^W(0,z)$
are the $m$ functions of the (whole line) potential $W$.
Note that $m_-(n_j,z)$ lies in the Weyl disk for $(S^{n_j}V)_-$, so the argument
from the proof of Lemma \ref{L4.1} does work and the fact that
this $m$ function does not refer to a full half line
does not cause any problems.

Theorem \ref{TBPorig}, if combined with Theorem \ref{T2.1}, now says that
\[
\int_A \omega_{M_-(t)}(-S) \, dt = \int_A \omega_{M_+(t)}(S)\, dt
\]
for all Borel sets $A\subset\Sigma_{ac}$, $S\subset\R$.
Now the argument presented in Sect.~2 in the paragraph preceding Theorem \ref{T2.1}
concludes the proof:
By Lebesgue's differentiation theorem,
\begin{equation}
\label{3.3}
\omega_{M_-(t)}(-S) = \omega_{M_+(t)}(S)
\end{equation}
for $t\in\Sigma_{ac}\setminus N$, $|N|=0$, and all intervals $S$ with rational endpoints
(or other countable collections of sets $S$).
We can also assume that $M_{\pm}(t)=\lim_{y\to 0+} M_{\pm}(t+iy)$ exist for these $t$.
If $M_-(t)\in\R$, then, by choosing small intervals about this value for $-S$, we see
that $M_+(t)=-M_-(t)$. If $M_-(t)\in\C^+$, then, as explained in Sect.~2, we can define
$\omega_{M_-(t)}$ directly, using \eqref{harmmeas} rather than a limit. This formula also shows that
then $\omega_{M_-(t)}(-S) = \omega_{-\overline{M_-(t)}}(S)$, and \eqref{3.3} now implies that
\begin{equation}
\label{3.4}
M_+(t) = -\overline{M_-(t)} .
\end{equation}
This is also what we found in the other case ($M_-(t)\in\R$), so \eqref{3.4} in fact holds
for almost every $t\in\Sigma_{ac}$, that is, $W\in\mathcal R (\Sigma_{ac})$, as claimed.
\end{proof}
It will be convenient to extract, for later use, a technical fact from this proof.
\begin{Lemma}
\label{L3.1}
Let $W$ be a bounded (whole line) potential, and, as above, denote its $m$ functions
by $M_{\pm}(z)=m_{\pm}^W(0,z)$. Then
$W\in \mathcal R (A)$ if and only if
\begin{equation}
\label{3.1}
\int_B \omega_{M_-(t)}(-S) \, dt = \int_B \omega_{M_+(t)}(S)\, dt
\end{equation}
for all Borel sets $B\subset A$, $|B|<\infty$, $S\subset\R$.
\end{Lemma}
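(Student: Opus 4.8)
The plan is to recognize that \eqref{3.1} is just the pointwise reflectionless condition \eqref{1.5} packaged in integrated form, so the whole argument is a matter of extracting pointwise information from set-function identities and vice versa. The forward direction ($W\in\mathcal R(A)\Rightarrow$ \eqref{3.1}) is the easy half: if $M_+(t)=-\overline{M_-(t)}$ for a.e.\ $t\in A$, then for such $t$ we have $\omega_{M_-(t)}(-S)=\omega_{-\overline{M_-(t)}}(S)=\omega_{M_+(t)}(S)$ for every Borel set $S$, using the symmetry of harmonic measure under $z\mapsto-\overline z$ (which sends $\C^+$ to itself and reflects $S$ to $-S$). One subtlety: when $M_-(t)\in\R$, the identity $M_+(t)=-\overline{M_-(t)}=-M_-(t)$ still holds, and $\omega_{M_-(t)}$ is then the boundary limit described in Sect.~2; one checks that the limiting harmonic measures still satisfy $\omega_{M_-(t)}(-S)=\omega_{M_+(t)}(S)$ for all $S$ away from the two-point boundary $\{M_+(t)\}=\{-M_-(t)\}$, and since any individual point has Lebesgue measure zero in the $t$-variable, integrating over $B\subset A$ gives \eqref{3.1} for all Borel $S$.

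For the converse I would run exactly the argument already presented in the proof of Theorem~\ref{TBP}. Assume \eqref{3.1} holds for all $B\subset A$ of finite measure and all Borel $S$. Taking $S$ to range over intervals with rational endpoints and applying Lebesgue's differentiation theorem to each, we get $\omega_{M_-(t)}(-S)=\omega_{M_+(t)}(S)$ for all $t\in A\setminus N$ with $|N|=0$ and all such $S$ simultaneously; we may also assume the nontangential limits $M_{\pm}(t)$ exist on $A\setminus N$. Now fix such a $t$. If $M_-(t)\in\C^+$, the harmonic measure $\omega_{M_-(t)}(\cdot)$ is a genuine (nonatomic) measure on $\R$ given by \eqref{harmmeas}, and the identity of $\omega_{M_-(t)}(-S)$ and $\omega_{M_+(t)}(S)$ on a generating family of sets forces $M_+(t)=-\overline{M_-(t)}$ (two Herglotz-type Poisson kernels agreeing on enough sets must have the same parameters). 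If $M_-(t)\in\R$, then choosing small intervals $-S$ shrinking to $M_-(t)$ shows $\omega_{M_+(t)}$ must concentrate at $-M_-(t)$, i.e.\ $M_+(t)=-M_-(t)$, which again reads $M_+(t)=-\overline{M_-(t)}$. Either way \eqref{1.5} holds a.e.\ on $A$, so $W\in\mathcal R(A)$.

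The only real obstacle is bookkeeping rather than mathematics: making sure the null set $N$ can be chosen once and for all (independently of $S$), which works because a countable generating family of $S$'s suffices, and being careful about the case $M_-(t)\in\R$, where the ``harmonic measure'' $\omega_{M_-(t)}$ is really a delta mass and the cautionary remark from Sect.~2 (that $\omega_{G(t)}(S)$ need not depend only on $G(t)$ and $S$) must be handled as it was there. Since all of this is literally lifted from the proof of Theorem~\ref{TBP}, I would keep the write-up to a few lines, citing that proof for the pointwise extraction step and noting that the only new content is the elementary forward implication via the $z\mapsto-\overline z$ symmetry of harmonic measure.
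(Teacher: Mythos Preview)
Your proposal is correct, and the converse direction (\eqref{3.1} $\Rightarrow W\in\mathcal R(A)$) is literally the paper's argument, lifted from the proof of Theorem~\ref{TBP}. For the forward direction, however, the paper takes a slightly cleaner route than you do: rather than worrying about the case $M_-(t)\in\R$, it invokes the fact (proved later as Proposition~\ref{P4.1}(b)) that if $W\in\mathcal R(A)$ then $\textrm{Im }M_{\pm}(t)>0$ for almost every $t\in A$. Thus the real-boundary-value case simply has measure zero and can be discarded; on the remaining full-measure set one uses the direct definition \eqref{harmmeas} of $\omega_{M_{\pm}(t)}$ together with the elementary identity $\omega_z(-S)=\omega_{-\overline z}(S)$ for $z\in\C^+$, and \eqref{3.1} follows immediately from $M_+(t)=-\overline{M_-(t)}$. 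Your handling of the real case would go through with a bit more care (ultimately appealing to $|M_-^{-1}(\{a\})|=0$ for each $a\in\R$), but the paper's observation that this case is null sidesteps that bookkeeping entirely.
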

\begin{proof}
It was proved above that \eqref{3.1} implies that $W\in\mathcal R (A)$,
the crucial ingredient being the trivial observation that $\omega_z(-S)=
\omega_{-\overline{z}}(S)$ for arbitrary $z\in\C^+$.

The converse follows just as quickly: It is well known that if $W\in\mathcal R (A)$, then
$\textrm{Im }M_{\pm}(t)>0$ for almost every $t\in A$. We will prove this again in
Proposition \ref{P4.1}(b) below. This, however, means that we can use the direct definition
of $\omega$ in \eqref{3.1}. In other words, we just substitute $M_{\pm}$ for $z$ in \eqref{harmmeas}.
But then \eqref{3.1} simply follows from the observation just discussed because
$-\overline{M_-}=M_+$ almost everywhere on $A$ by assumption.
\end{proof}
\section{Proof of the Oracle Theorem}
Given Theorem \ref{TBP}, the proof of Theorem \ref{Tor} consists essentially of an
adaptation of arguments of Kotani \cite{Kotac,Kot}. In a nutshell, the argument runs as follows:
Write again $W_{\pm}$ for the restrictions of a potential $W$ to $\Z_{\pm}$. By Proposition \ref{P1.1}
and Theorem \ref{TBP}, for large $n$, the distance
$d(S^n V, W)$ is small for a suitable
$W\in\mathcal R (\Sigma_{ac})\subset\mathcal R (A)$ ($W$ of course depends on $n$).
It is a well known fact (and will be discussed below again, see Proposition \ref{P4.1}(c)) that
if $W\in\mathcal R(A)$, then
$W_-$ determines $W_+$ (and vice versa). Now $S^nV$ is close to $W$, so if we can establish the continuity
of all the maps involved, then it should also be true that approximate knowledge of
$(S^nV)_-$ approximately determines $(S^nV)_+$.

Let us now look at the details of this argument.
We first collect some basic facts about reflectionless potentials.
We will write $\mathcal R_{\pm}(A)$ for the set of restrictions
$W_{\pm}$ of potentials $W\in\mathcal R (A)$
to $\Z_{\pm}$.
\begin{Proposition}
\label{P4.1}
Suppose that $A\subset\R$, $|A|>0$, and let $W\in\mathcal R(A)$.\\
(a) $M_+(n,t) = - \overline{M_-(n,t)}$ for almost every $t\in A$
for all $n\in\Z$. In other words, $S^nW\in\mathcal R(A)$ for all $n\in\Z$;\\
(b) The two half line operators satisfy $\Sigma_{ac}(W_{\pm}) \supset A$;\\
(c) $W_-$ (or $W_+$) uniquely determines $W$. Put differently,
the restriction maps
\[
\mathcal R(A)\to \mathcal R_{\pm}(A), \quad\quad W\mapsto W_{\pm}
\]
are injective;\\
(d) For every $C>0$, $\mathcal R^C(A):=\mathcal R(A) \cap \mathcal V^C$ is compact;\\
(e) Define $\mathcal R_{\pm}^C(A):= \{ W_{\pm} : W\in\mathcal R^C(A)\}$. Then the map
\[
\mathcal R_-^C(A) \to \mathcal R_+^C(A), \quad\quad W_-\mapsto W_+
\]
(which is well defined, by part (c)) is uniformly continuous.
\end{Proposition}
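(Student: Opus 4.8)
The plan is to establish the five parts roughly in order, with (a)--(c) drawing on standard facts about reflectionless operators together with boundary uniqueness for Herglotz functions, and (d)--(e) then following from the value-distribution machinery of Sections~2--3 plus soft compactness. For (a) I would use the reformulation of the reflectionless condition recorded after Definition~\ref{D1.1}: $W\in\mathcal R(A)$ if and only if $\textrm{Re }G^W(n,t)=0$ for almost every $t\in A$ and \emph{all} $n\in\Z$ (see \cite[Lemma~8.1]{Teschl}). Since $G^{S^kW}(n,z)=G^W(n+k,z)$, this description is manifestly shift invariant, so $S^kW\in\mathcal R(A)$ for every $k\in\Z$; spelling out the definitions --- the half line $m$ functions of $S^nW$ at the origin are exactly $M_\pm(n,\cdot)$ for $W$ --- gives the asserted identity $M_+(n,t)=-\overline{M_-(n,t)}$ almost everywhere on $A$ for all $n$. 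One could instead verify directly, from the coefficient-stripping recursions for $M_\pm(n,\cdot)$, that the relation $M_+=-\overline{M_-}$ is preserved when $n$ changes by $1$; the Green-function route seems cleaner, mainly because it avoids bookkeeping around the asymmetric normalization of $m_-$ in \eqref{defm}.

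For (b) and (c) the common tool is that a Herglotz function is determined by its boundary values on any set of positive Lebesgue measure, and, more precisely, that a Herglotz function cannot have boundary value $0$ on such a set: if $H\in\mathcal H$ vanished almost everywhere on a positive-measure set, then the bounded holomorphic function $1/(H+i)$ on $\C^+$ would equal the constant $-i$ there, hence everywhere, forcing $H\equiv 0$, which contradicts $H\in\mathcal H$. For (b): $M_\pm=m_\pm^W(0,\cdot)$ are Herglotz, so $\textrm{Im }M_\pm(t+i0)\ge 0$ almost everywhere, and since $\Sigma_{ac}(W_\pm)=\{t:\textrm{Im }M_\pm(t+i0)>0\}$ up to null sets, it suffices to prove strict positivity almost everywhere on $A$. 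Put $H=M_++M_-\in\mathcal H$; on $A$ the reflectionless condition gives $H(t+i0)=M_+(t+i0)+M_-(t+i0)=2i\,\textrm{Im }M_-(t+i0)$, which is purely imaginary almost everywhere, and by the no-vanishing property just noted $\textrm{Im }H(t+i0)>0$ almost everywhere on $A$, i.e.\ $\textrm{Im }M_-(t+i0)>0$ almost everywhere on $A$; since $\textrm{Im }M_+=\textrm{Im }M_-$ almost everywhere on $A$, the same holds for $M_+$, and this is precisely $\Sigma_{ac}(W_\pm)\supset A$. For (c): if $W,W'\in\mathcal R(A)$ and $W_-=W'_-$, then $M_-^W=M_-^{W'}$ by Lemma~\ref{L4.1}, so the reflectionless condition gives $M_+^W=M_+^{W'}$ almost everywhere on $A$, hence $M_+^W\equiv M_+^{W'}$ by boundary uniqueness, hence $W_+=W'_+$ by the injectivity in Lemma~\ref{L4.1}, and therefore $W=W'$; the version with $+$ and $-$ interchanged is identical.

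For (d), since $\mathcal V^C$ is compact it suffices to show $\mathcal R^C(A)$ is closed. If $W^{(k)}\in\mathcal R^C(A)$ and $W^{(k)}\to W$ in $\mathcal V^C$, then $W^{(k)}_\pm\to W_\pm$, so $M^{(k)}_\pm\to M^W_\pm$ locally uniformly on $\C^+$ by Lemma~\ref{L4.1}, hence in value distribution by Theorem~\ref{T2.1}. By Lemma~\ref{L3.1}, for all Borel $B\subset A$ with $|B|<\infty$ and all Borel $S\subset\R$ one has $\int_B\omega_{M^{(k)}_-(t)}(-S)\,dt=\int_B\omega_{M^{(k)}_+(t)}(S)\,dt$; letting $k\to\infty$ (legitimate since $|B|<\infty$) the identity passes to $W$, so $W\in\mathcal R(A)$ by Lemma~\ref{L3.1} again, and hence $W\in\mathcal R^C(A)$. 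Part (e) is then formal: the restriction maps $r_\pm\colon\mathcal V^C\to\mathcal V_\pm^C$ are continuous, so $\mathcal R_\pm^C(A)=r_\pm(\mathcal R^C(A))$ are compact by (d); $r_-\colon\mathcal R^C(A)\to\mathcal R_-^C(A)$ is a continuous bijection (onto by definition, injective by (c)) of compact metric spaces and hence a homeomorphism, so $W_-\mapsto W_+=r_+\circ r_-^{-1}(W_-)$ is continuous, and a continuous map on a compact metric space is uniformly continuous.

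I expect the only genuinely non-formal point to be the strict positivity in (b): passing from the trivial bound $\textrm{Im }M_\pm\ge 0$ to $\textrm{Im }M_\pm(t+i0)>0$ almost everywhere on $A$ is exactly where the reflectionless hypothesis is used, and it rests on the boundary-uniqueness property of Nevanlinna-class functions noted above (which also powers (c)). Everything else is either routine manipulation of $m$ functions (parts (a), (c)) or a soft compactness argument once Lemma~\ref{L3.1} and the equivalences of Section~2 are in hand (parts (d)--(e)).
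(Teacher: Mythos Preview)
Your proposal is correct and follows essentially the same route as the paper: parts (b)--(e) match the paper's proof almost verbatim (same use of $H=M_++M_-$ and boundary uniqueness for (b), same injectivity chain $W_-\to M_-\to M_+\to W_+$ for (c), same closedness argument via Lemma~\ref{L4.1}, Theorem~\ref{T2.1}, and Lemma~\ref{L3.1} for (d), same compact-bijection argument for (e)). The only stylistic difference is in (a), where you invoke the Green-function reformulation to make shift invariance manifest, while the paper instead appeals directly to the Riccati evolution of $M_\pm$; both are standard and both point to \cite[Lemma~8.1]{Teschl}, so this is not a substantive departure.
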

\begin{proof}
(a) For $n=0$, this is the definition of $\mathcal R (A)$, and for arbitrary $n$, it follows
from the evolution of $M_{\pm}$ (Riccati equation; compare \cite[Lemma 8.1]{Teschl}, or see
the discussion in the appendix to this paper).

(b) From the definition of $\mathcal R(A)$, we have that $\textrm{Im }M_-=\textrm{Im }M_+$
almost everywhere on $A$. If we had $\textrm{Im }M_{\pm}(t)=0$ on a subset of $A$ of
positive measure,
then it would follow that $M_-(t)+M_+(t) =0$ on this set, but $M_-+M_+$ is a Herglotz function
and these are determined by their boundary values on any set of positive measure, so it would
actually follow that $M_-(z)+M_+(z)\equiv 0$, which is absurd.

(c) $W_-$ determines $M_-$, and if $W\in\mathcal R (A)$,
then $M_-$ determines $M_+$ almost everywhere on $A$.
As just discussed, this determines $M_+(z)$ completely, and from $M_+$ we can go back to $W_+$.

(d) Since $\mathcal V^C$ is compact, it suffices to show that $\mathcal R^C(A)$ is closed.
So assume that $W_j\in \mathcal R^C(A)$, $W\in\mathcal V^C$, $W_j\to W$. By Lemma \ref{L4.1}
and Theorem \ref{T2.1}, we then have convergence in value distribution of the $m$ functions
$M_{\pm}^{(j)}$ to $M_{\pm}$. By Lemma \ref{L3.1}, this implies that
\[
\int_B \omega_{M_-(t)}(-S)\, dt = \int_B \omega_{M_+(t)}(S)\, dt
\]
for all Borel sets $B\subset A$, $S\subset\R$,
so the claim now follows by again using Lemma \ref{L3.1} (the other direction this time).

(e) This follows at once from (c) and (d): If restricted to $\mathcal R^C(A)$, the map
$W\mapsto W_-$ is an injective, continuous map between compact metric spaces. Therefore,
its inverse
\[
\mathcal R_-^C(A) \to \mathcal R^C(A) , \quad\quad W_-\mapsto W
\]
is continuous, too. Thus the association $W_-\mapsto W_+$ is the composition of the
continuous maps $W_-\mapsto W\mapsto W_+$. \textit{Uniform} continuity is automatic
because $\mathcal R_-^C(A)$ is compact.
\end{proof}
There are some pitfalls hidden here for the over-zealous. For example, it is not
true in general that $\Sigma_{ac}(W_-)=\Sigma_{ac}(W_+)$ if $W$ is reflectionless.
Perhaps somewhat more disturbingly, it is also not true in general that we can recover
$W$ from $W_-$ if we only know that $W$ is reflectionless on \textit{some} set.
In other words, there exist potentials $W^{(j)}\in\mathcal R(A_j)$ ($j=1,2$), so that
$W_-^{(1)}=W_-^{(2)}$, but $W^{(1)}\not= W^{(2)}$. We will return to these issues
in Sect.~5. See especially Theorems \ref{T5.1}, \ref{T5.2}.

Let us now use Proposition \ref{P4.1} to prove Theorem \ref{Tor}.
\begin{proof}[Proof of the Oracle Theorem]
Let $A\subset\R$, $C>0$, $\epsilon>0$ be given. Determine $\delta>0$ so that
\begin{equation}
\label{4.9}
\left| W(1)-\widetilde{W}(1)\right| < \epsilon \quad\quad\textrm{ if }\:
W,\widetilde{W}\in \mathcal R^C(A),\quad d(W_-, \widetilde{W}_-)< 5\delta .
\end{equation}
This is possible by Proposition \ref{P4.1}(e). For technical reasons, we also demand
that $\delta<\epsilon$.

Next, consider the (closed) $2\delta$-neighborhood of $\mathcal R_-^C(A)$. We will write
\[
\mathcal U_{2\delta} =
\left\{ U_-\in\mathcal V_-^C: d\left( U_-,\mathcal R_-^C(A)\right) \le 2\delta \right\}
\]
for this set. Proposition \ref{P4.1} implies that
$\mathcal U_{2\delta}\subset \mathcal V_-^C$ is compact,
so we can cover this set by finitely many balls about elements
of $\mathcal R_-^C(A)$ as follows:
\[
\mathcal U_{2\delta} \subset B_{3\delta}(W_-^{(1)}) \cup \ldots \cup B_{3\delta}(W_-^{(M)}) ,
\]
where $W_-^{(j)}\in\mathcal R_-^C(A)$.

We can now give a preliminary definition of the oracle $\Delta$ (smoothness will have to
be addressed later). Pick $L$ (sufficiently large) so that
$(3C+1) \sum_{j>L} 2^{-j} < \delta$. This choice of $L$ makes sure that $d(U_-,\widetilde{U}_-)<\delta$
whenever $U(n)=\widetilde{U}(n)$ for $n=0,-1,\ldots, -L$ (and $U_-,\widetilde{U}_-\in\mathcal V_-^C$).

\textit{Apology to the Jacobi reader: }While I usually try to write things up in such a way that the
standard replacement $V\to (a,b)$ is the only adjustment that has to be made, this is unfortunately
not the case in this last part of this proof. Here, more extensive changes in the notation
(but not in the underlying argument, which remains valid) become necessary.

If $u_{-L}, \ldots , u_0$ are given numbers with $|u_j|\le C$ which have the property that
\[
[ \ldots, 0,0,0, \ldots, 0, u_{-L}, \ldots, u_0 ] \in B_{3\delta}(W_-^{(1)}) ,
\]
then put
\[
\Delta(u_{-L}, \ldots , u_0 ) = W^{(1)}(1) .
\]
Having done that, move on to the next ball: If
\[
[ \ldots, 0,0,0, \ldots, 0, u_{-L}, \ldots, u_0 ] \in B_{3\delta}(W_-^{(2)})\setminus
B_{3\delta}(W_-^{(1)}) ,
\]
define
\[
\Delta(u_{-L}, \ldots , u_0 ) = W^{(2)}(1) .
\]
Continue in this way. It could happen that,
after having dealt with the last ball $B_{3\delta}(W_-^{(M)})$,
there are still points left in $[-C,C]^{L+1}$ for which $\Delta$ has not yet been defined. However,
by the construction of the balls, these are points that can never be close to any reflectionless
potential, so they are irrelevant as far as
Theorem \ref{Tor} is concerned (because by Proposition \ref{P1.1}
and Theorem \ref{TBP},
$(S^nV)_-$ \textit{will} eventually be close to certain reflectionless potentials).
If a complete (preliminary) definition
of the function $\Delta$ is desired,
we can assign arbitrarily chosen values to these points ($\Delta=0$, say).

It remains to show that $\Delta$ indeed predicts $V(n+1)$. To this end, take $n_0\ge L$ so large that
$d(S^nV, \omega(V))<\delta$ for $n\ge n_0$. This is possible by Proposition \ref{P1.1}.
So, by Theorem \ref{TBP}, for every $n\ge n_0$, we can find $\widetilde{W}\in \mathcal R^C(A)$ so that
\begin{equation}
\label{4.1}
d\left( S^nV, \widetilde{W}\right) < \delta .
\end{equation}
Note that $\widetilde{W}$ will usually depend on $n$, but $n$ is fixed in this part of the argument,
so we suppress this dependence in the notation.
By the choice of $L$, \eqref{4.1} clearly implies that
\[
[ \ldots, 0,0,0, \ldots, 0, (S^nV)(-L), \ldots, (S^nV)(0) ] \in \mathcal U_{2\delta} .
\]
Thus there exists $j\in \{ 1, \ldots, M\}$ so that
\[
[ \ldots, 0,0,0, \ldots, 0, V(n-L), \ldots, V(n) ] \in B_{3\delta}(W_-^{(j)}) .
\]
Fix the minimal $j$ with this property. With this choice of $j$, we have that
\begin{equation}
\label{4.2}
\Delta(V(n-L),\ldots , V(n)) = W^{(j)}(1) ,
\end{equation}
by the construction of $\Delta$. Also,
\begin{align*}
d(W_-^{(j)},\widetilde{W}_-) & \le d(W_-^{(j)}, [\ldots,0,V(n-L),\ldots, V(n)]) + \\
& \quad \quad d([\ldots,0,V(n-L),\ldots, V(n)], (S^nV)_-) +
d((S^nV)_-, \widetilde{W}_-) \\
& < 3\delta + \delta + \delta = 5\delta ,
\end{align*}
thus, by the defining property \eqref{4.9} of $\delta$,
\begin{equation}
\label{4.3}
\left| W^{(j)}(1) - \widetilde{W}(1) \right| < \epsilon .
\end{equation}
On the other hand, \eqref{4.1} clearly also says that
\[
\left| V(n+1) - \widetilde{W}(1) \right| < 2\delta < 2\epsilon ,
\]
and if this is combined with \eqref{4.2}, \eqref{4.3}, we obtain that
\[
\left| V(n+1) - \Delta(V(n-L), \ldots , V(n))\right| < 3\epsilon ,
\]
as required.

The $\Delta$ constructed above is not continuous, but this is easy to fix.
We now sketch how this can be done. Note that $\Delta$ does have redeeming properties: it takes
only finitely many values and we can also make sure that there
exist a set $D\subset [-C,C]^{L+1}$ and $\delta_0>0$ so that $|\Delta(x)-\Delta(y)|<\epsilon$
whenever $x,y\in D$, $|x-y|<\delta_0$ (just replace $5\delta$ by $6\delta$ in \eqref{4.9}).
Moreover, we can redefine $\Delta$ on the complement of $D$
without affecting the statement of Theorem \ref{Tor}.
Therefore, by taking convolutions with suitable functions, we can
pass to a $C^{\infty}$ modification of $\Delta$
that still predicts $V(n+1)$ with accuracy $4\epsilon$, say.
\end{proof}
\section{More on reflectionless potentials}
We start out with some quick observations.
If $M_{\pm}$ are the $m$ functions of some $W\in\mathcal R(A)$,
then $H=M_+ +M_-$ is another Herglotz function and $\textrm{Re }H(t)=0$ for almost every $t\in A$.
We are therefore led to also consider these Herglotz functions,
in addition to the $m$ functions of reflectionless
potentials. We introduce
\begin{align*}
\mathcal N (A) & =
\left\{ H\in \mathcal H : \textrm{ Re }H(t)=0 \textrm{ for a.e.\ } t\in A \right\} , \\
\mathcal Q (A) & =
\left\{ F_+ \in\mathcal H :\textrm{ There exists }F_-\in\mathcal H \textrm{ so that }
F_+(t)=-\overline{F_-(t)}
\textrm{ for a.e.\ }t\in A\right\} .
\end{align*}
Note that if $F_+\in\mathcal Q (A)$,
then the $F_-$ from the definition is unique and $F_-\in \mathcal Q (A)$, too.

It is easy to determine all decompositions of the type $H=F_++F_-$, where $F_{\pm}\in\mathcal Q(A)$
are as above, of a given $H\in\mathcal N(A)$.
\begin{Proposition}
\label{P5.1}
Let $A\subset\bbR$, $|A|>0$, and suppose that $H\in\mathcal N (A)$.
Write $\nu\in\mathcal M (\bbR_{\infty})$
for the measure from the representation \eqref{Hrep} of $H$. Let $F_+\in\mathcal H$, and put $F_-=H-F_+$.
Then the following statements are equivalent:\\
(a) $F_{\pm}\in\mathcal Q (A)$ and $F_+=-\overline{F_-}$ almost everywhere on $A$;\\
(b) $F_+$ is of the form
\[
F_+(z) = a_+ + \int_{\bbR_{\infty}} \frac{1+tz}{t-z}\, f(t)\, d\nu(t) ,
\]
with $a_+\in\bbR$, $f\in L_1(\bbR_{\infty}, d\nu)$,
$0\le f\le 1$, $f=1/2$ (Lebesgue) almost everywhere on $A$.
\end{Proposition}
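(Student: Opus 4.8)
The plan is to prove both implications by working directly with the Herglotz representation \eqref{Hrep} and exploiting the structure of the boundary values. Recall that for $H\in\mathcal N(A)$ with representation $H(z)=a+\int_{\R_\infty}\frac{1+tz}{t-z}\,d\nu(t)$, the condition $\operatorname{Re}H(t)=0$ for a.e.\ $t\in A$ should translate, via the standard boundary-value theory (the Poisson and conjugate-Poisson kernels), into a statement about the measure $\nu$: writing $d\nu=d\nu_s+w\,dt$ for the Lebesgue decomposition, the a.c.\ part satisfies $\pi(1+t^2)w(t)=\operatorname{Im}H(t)$ a.e., while the real part of the boundary value is governed by $a$ plus the (principal value) Hilbert-transform-type integral against $\nu$. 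The key soft fact I would isolate first is that if $H\in\mathcal N(A)$, then $\nu$ restricted to $A$ is purely absolutely continuous and $\operatorname{Im}H(t)>0$ a.e.\ on $A$ --- otherwise $\operatorname{Re}H$ could not vanish on a positive-measure subset (this is the same "Herglotz functions are determined by boundary values on a set of positive measure" argument already used in Proposition \ref{P4.1}(b)).

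For (b) $\Rightarrow$ (a): given $F_+$ as in (b), set $F_-=H-F_+$; then $F_-(z)=(a-a_+)+\int\frac{1+tz}{t-z}(1-f)\,d\nu$, so since $0\le 1-f\le 1$ and $1-f\in L_1(d\nu)$, $F_-\in\mathcal H$. Now compute boundary values on $A$: because $f=1/2$ a.e.\ on $A$, the a.c.\ parts of the measures for $F_+$ and $F_-$ on $A$ are both $\frac12 w\,dt$, hence $\operatorname{Im}F_+(t)=\operatorname{Im}F_-(t)=\frac12\operatorname{Im}H(t)$ a.e.\ on $A$. For the real parts, on $A$ the singular part of $\nu$ vanishes, so the contributions to $\operatorname{Re}F_+(t)$ and $\operatorname{Re}F_-(t)$ coming from the integral against $f\,d\nu$ and $(1-f)\,d\nu$ over $A$ involve the same density $\frac12 w$ scaled equally --- but one must handle the contribution from $\nu\lfloor A^c$ carefully. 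The cleanest route is: $\operatorname{Re}H(t)=\operatorname{Re}F_+(t)+\operatorname{Re}F_-(t)=0$ a.e.\ on $A$ gives $\operatorname{Re}F_-(t)=-\operatorname{Re}F_+(t)$; combined with $\operatorname{Im}F_-(t)=\operatorname{Im}F_+(t)$ this is exactly $F_-(t)=-\overline{F_+(t)}$ a.e.\ on $A$, i.e.\ $F_\pm\in\mathcal Q(A)$. So this direction is essentially automatic once one knows $\operatorname{Re}H=0$ a.e.\ on $A$ distributes the imaginary part equally; the only genuine input is that $\operatorname{Im}F_\pm$ split as $\tfrac12\operatorname{Im}H$, which forces $f=1/2$ a.e.\ on $A$ to be consistent --- and that is given.

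For (a) $\Rightarrow$ (b): suppose $F_+\in\mathcal H$, $F_-=H-F_+\in\mathcal H$, and $F_+=-\overline{F_-}$ a.e.\ on $A$. Write $F_+(z)=a_++\int\frac{1+tz}{t-z}\,d\nu_+(t)$ with $\nu_+\in\mathcal M(\R_\infty)$. Since $F_-=H-F_+$ is Herglotz, passing to imaginary parts of the representations at $z=i\cdot 0^+$ level (or directly on the measures) gives $\nu-\nu_+\ge 0$, i.e.\ $\nu_+\le\nu$ as measures; hence $\nu_+\ll\nu$ and by Radon--Nikodym $d\nu_+=f\,d\nu$ with $0\le f\le 1$ and $f\in L_1(d\nu)$. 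It remains to show $f=1/2$ a.e.\ on $A$: on $A$ we have $\operatorname{Im}F_+(t)=\operatorname{Im}F_-(t)$ (from $F_+=-\overline{F_-}$), and $\operatorname{Im}F_+(t)+\operatorname{Im}F_-(t)=\operatorname{Im}H(t)$, so $\operatorname{Im}F_+(t)=\tfrac12\operatorname{Im}H(t)$ a.e.\ on $A$; since on $A$ both $\operatorname{Im}F_+$ and $\operatorname{Im}H$ are (up to the factor $\pi(1+t^2)$) the a.c.\ densities $f\cdot w$ and $w$ respectively, and $w>0$ a.e.\ on $A$, we get $f=1/2$ a.e.\ on $A$.

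The main obstacle is the careful justification of the boundary-value identities --- specifically, that $\operatorname{Re}H(t)=0$ a.e.\ on $A$ genuinely forces the singular part of $\nu$ to avoid $A$ and that $\operatorname{Im}H(t)>0$ a.e.\ on $A$, so that the imaginary-part bookkeeping (densities scale by $f$) is valid and $w>0$ makes the division legitimate. This is standard Herglotz/Poisson-kernel theory (e.g.\ the Aronszajn--Donoghue description of boundary behavior), but it must be invoked correctly; once it is in place, everything else is the elementary measure-theoretic manipulation sketched above. I would state the needed boundary facts as a short preliminary observation (reusing the uniqueness argument from Proposition \ref{P4.1}(b)) and then present the two implications as above.
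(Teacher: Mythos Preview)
Your proposal is correct and follows essentially the same argument as the paper: in both directions one uses that $\nu_+ + \nu_- = \nu$ forces $d\nu_+ = f\,d\nu$ with $0\le f\le 1$, and then the identity $\operatorname{Im}F_+ = \operatorname{Im}F_-$ on $A$ together with $\operatorname{Im}H>0$ a.e.\ on $A$ (from the uniqueness argument you cite) pins down $f=1/2$ there, while for the converse $\operatorname{Re}F_+ + \operatorname{Re}F_- = \operatorname{Re}H = 0$ supplies the real-part relation. Your preliminary remark that the singular part of $\nu$ must vanish on $A$ is neither proved nor needed in the paper, and indeed you correctly abandon it in favor of the direct $\operatorname{Re}H=0$ argument.
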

\begin{proof}
If $F_{\pm}$ have the properties stated in part (a), then,
by the uniqueness of the Herglotz representations,
the measures from \eqref{Hrep}
must satisfy $\nu_++\nu_-=\nu$.
So, in particular, $\nu_+\le \nu$, and we can write
\[
d\nu_+(t) = f(t)\, d\nu(t) , \quad\quad d\nu_-(t) = (1-f(t))\, d\nu(t)
\]
for some $f\in L_1(\R_{\infty}, d\nu)$, $0\le f\le 1$. For almost every $t\in A$, we have that
$\textrm{Im }H(t)>0$ (because otherwise $H(t)=0$ on a set of positive measure, which is impossible) and
\[
\textrm{Im }F_+(t) = f(t)\textrm{Im }H(t), \quad\quad
\textrm{Im }F_-(t) = (1-f(t))\textrm{Im }H(t) .
\]
In this context, recall that the imaginary
part of the boundary value of a Herglotz function equals $\pi$ times the density of the
absolutely continuous part of the associated measure $d\rho =\chi_{\R}(1+t^2)\, d\nu$.
It follows that $f=1/2$ Lebesgue almost everywhere on $A$.

Conversely, if $F_+$ is of the form described in part (b), then
\[
F_-(z) = a-a_+ + \int_{\bbR_{\infty}} \frac{1+tz}{t-z}\, (1-f(t))\, d\nu(t) .
\]
First of all, this shows that $F_-\in\mathcal H$. Since $f=1/2$ almost everywhere on $A$, we have
that $\textrm{Im }F_+(t)=\textrm{Im }F_-(t)$ for almost every $t\in A$. Moreover,
\[
\textrm{Re }F_+(t)+\textrm{Re }F_-(t) = \textrm{Re }H(t) = 0
\]
for almost every $t\in A$, thus indeed $F_+=-\overline{F_-}$ almost everywhere on $A$, as required.
\end{proof}
We are of course particularly interested in functions $H$ and $F_{\pm}$ that come
from Jacobi matrices. More specifically, we want to start out with an $H\in\mathcal N(A)$ and then
find all $W\in\mathcal R(A)$ corresponding to this $H$.
This will be achieved in Corollary \ref{C5.1} below.

We will need an inverse spectral theorem. We deal with this issue first and incorporate the additional
conditions imposed by the requirement that $W\in\mathcal R(A)$ afterwards.
The fundamental result in this context says that any probability
measure on the Borel sets of $\R$ with bounded, infinite support is the spectral measure
of a unique (half line) Jacobi matrix on $\Z_+$ with $0<a(n)\le C$, $|b(n)|\le C$ for
some $C>0$, but in this form, the result is not immediately useful here. Rather,
the following version is tailor made for our needs.
\begin{Theorem}
\label{Tinv}
Let $H\in\mathcal H$. There exist a (whole line)
Jacobi matrix $J$ with bounded coefficients ($0<a(n)\le C$, $|b(n)|\le C$)
and a constant $c>0$
so that $cH=M_++M_-$ if and only if $H$ is of the form
\begin{equation}
\label{5.8}
H(z) = A + Bz + \int_{\R} \frac{d\rho(t)}{t-z} ,
\end{equation}
with $B>0$, and $\rho$ is a finite measure on the Borel sets of $\R$
with bounded, infinite support.

If $H\in\mathcal H$ satisfies these conditions and if $F_{\pm}\in\mathcal H$ are such that
$F_++F_-=H$, then there exists a (whole line) Jacobi matrix (with bounded coefficients, as above)
so that $cF_{\pm}=M_{\pm}$ for some $c>0$ if and only if $F_+$ is of the following form:
\[
F_+(z) = \int_{\R} \frac{d\rho_+(t)}{t-z} ,
\]
and both $\rho_+$ and $\rho_-:=\rho-\rho_+$ have infinite supports.

In this case, the constant
$c>0$ is uniquely determined, and thus the pair $(H,F_+)$ completely determines the
Jacobi matrix. We have that $c=\rho_+(\R)^{-1}$. In particular, $c$ only depends on $F_+$.

Finally, the following formula holds:
\[
-\frac{B}{H(z)} = \langle \delta_0 , (J-z)^{-1} \delta_0 \rangle .
\]
\end{Theorem}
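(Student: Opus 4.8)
The plan is to reduce all three characterizations, the uniqueness of $c$, and the resolvent formula to the classical inverse spectral theorem for half line Jacobi matrices quoted just before the statement (bounded half line Jacobi matrices on $\Z_+$ correspond bijectively, via the spectral measure of $\delta_1$, to probability measures of bounded, infinite support), together with the explicit descriptions of $m_\pm(0,\cdot)$ recorded in Section~3. The organizing device is to cut a whole line $J$ at the bond $a(0)$: this produces the half line operator $J_0^+$ on $\{1,2,\ldots\}$ with $M_+(z)=\int d\sigma(t)/(t-z)$, where $\sigma$ is the (probability) spectral measure of $\delta_1$ for $J_0^+$ (see \eqref{3.2}); on the other side the remaining data are $a(0)>0$, $b(0)$, $a(-1)>0$ and the half line matrix on $\{\ldots,-2,-1\}$, which by the formula for $m_-$ in Section~3 give
\[
M_-(z)=\frac{z-b(0)}{a(0)^2}+\frac{a(-1)^2}{a(0)^2}\int\frac{d\tau(t)}{t-z},
\]
$\tau$ being the (probability) spectral measure of $\delta_{-1}$ for that left matrix. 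So $M_+\leftrightarrow J_0^+$ and $M_-\leftrightarrow(a(0),b(0),a(-1),\text{left matrix})$; the second correspondence is read off by taking the coefficient of $z$ (it is $1/a(0)^2$), the constant term (it is $-b(0)/a(0)^2$), the total mass of the measure part (it is $a(-1)^2/a(0)^2$, since $\tau(\R)=1$), and then recovering $\tau$ by Stieltjes inversion and invoking the classical theorem once more. Both $\sigma$ and $\tau$ have bounded support (the $a(n),b(n)$ are bounded) and infinite support (spectral measures of cyclic vectors for infinite dimensional operators).

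\emph{Necessity.} First suppose only that $cH=M_++M_-$ for some such $J$ and $c>0$. Adding the two formulas for $M_\pm$ gives $cH(z)=-b(0)/a(0)^2+z/a(0)^2+\int d\bigl(\sigma+\tfrac{a(-1)^2}{a(0)^2}\tau\bigr)(t)/(t-z)$, so, by uniqueness of the Herglotz representation, $H$ has the form \eqref{5.8} with $B=1/(ca(0)^2)>0$ and $\rho=\tfrac1c\bigl(\sigma+\tfrac{a(-1)^2}{a(0)^2}\tau\bigr)$, a finite measure of bounded, infinite support. If moreover $cF_\pm=M_\pm$, then $cF_+=\int d\sigma/(t-z)$ shows that $F_+(z)=\int d\rho_+(t)/(t-z)$ with $\rho_+=\sigma/c$ of infinite support, while $\rho_-:=\rho-\rho_+=\tfrac1c\tfrac{a(-1)^2}{a(0)^2}\tau$ is a positive multiple of $\tau$ and therefore also has infinite support.

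\emph{Sufficiency, the value of $c$, and uniqueness.} Conversely, let $H$ be of the form \eqref{5.8}, and for the sharper statement let $F_+(z)=\int d\rho_+(t)/(t-z)$ with $F_-:=H-F_+\in\mathcal H$ and $\rho_+,\rho_-:=\rho-\rho_+$ of infinite support; uniqueness of Herglotz representations forces $\rho_-\ge0$ and $F_-(z)=A+Bz+\int d\rho_-(t)/(t-z)$. Since $F_+\in\mathcal H$, $\rho_+(\R)>0$, so put $c:=\rho_+(\R)^{-1}$; then $c\rho_+$ is a probability measure of bounded ($\rho_+\le\rho$), infinite support, and the classical theorem yields a unique bounded $J_0^+$ with spectral measure $c\rho_+$, i.e.\ $cF_+=M_+$. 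For the other side set $a(0):=(cB)^{-1/2}$ (possible as $B>0$), $b(0):=-A/B$, $a(-1):=(\rho_-(\R)/B)^{1/2}$ (positive as $\rho_-\neq0$), and take for the left matrix the one with spectral measure $\rho_-/\rho_-(\R)$; plugging these into the displayed formula for $M_-$, and using $1/a(0)^2=cB$ and $a(-1)^2/a(0)^2=c\rho_-(\R)$, gives $M_-=cA+cBz+c\int d\rho_-/(t-z)=cF_-$. Thus the whole line $J$ assembled from this data has bounded coefficients (bounded support of the measures forces bounded Jacobi parameters) and $cH=M_++M_-$; in particular the ``if'' direction of the first characterization follows on choosing, say, $F_+(z)=\int\tfrac12 d\rho(t)/(t-z)$, since $\tfrac12\rho$ has the same (infinite) support as $\rho$. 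Finally, if $cF_\pm=M_\pm$ for some $c>0$, then the Herglotz measure of $M_+=cF_+$ is a probability measure, so $c\rho_+(\R)=1$, i.e.\ $c=\rho_+(\R)^{-1}$ is forced; $c$ being determined, $M_+$ and $M_-=c(H-F_+)$ are determined, hence so are $J_0^+$ (classical uniqueness) and, through the dictionary above, $a(0),b(0),a(-1)$ and the left matrix, so $(H,F_+)$ determines $J$.

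\emph{The resolvent formula.} Fix $z\in\C^+$ and let $u=(J-z)^{-1}\delta_0\in\ell^2(\Z)$, the unique $\ell^2$ solution of $(J-z)u=\delta_0$. For $n\ge1$ this equation is homogeneous, so $u$ on $\{0,1,2,\ldots\}$ solves \eqref{je} there and is $\ell^2$ at $+\infty$; by limit point case at $+\infty$ (valid since $a(n)\le C$) it is a multiple of $f_+(\cdot,z)$, whence $u(1)=u(0)f_+(1)/f_+(0)=-a(0)M_+(z)u(0)$ (and $f_+(0)\ne0$ because $M_+(z)$ is finite). Similarly $u$ on $\{\ldots,-1,0\}$ is a multiple of $f_-(\cdot,z)$, and \eqref{je} for $f_-$ at $n=0$ gives $a(-1)u(-1)=u(0)a(-1)f_-(-1)/f_-(0)=\bigl[(z-b(0))-a(0)^2M_-(z)\bigr]u(0)$. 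Substituting these into \eqref{je} for $u$ at $n=0$, which carries the extra $+1$ from $\delta_0$, the $(z-b(0))u(0)$ terms cancel and one is left with $-a(0)^2\bigl(M_+(z)+M_-(z)\bigr)u(0)=1$, i.e.\ $u(0)=-1/(a(0)^2cH(z))=-B/H(z)$ by $cB=1/a(0)^2$; as $u(0)=\langle\delta_0,(J-z)^{-1}\delta_0\rangle$, this is the asserted identity. The hard part here will not be any single deep step, but keeping the bookkeeping straight around the non symmetric object $m_-(0,\cdot)$ --- the ``Neumann side'' with its $\tfrac{z-b(0)}{a(0)^2}+\tfrac{a(-1)^2}{a(0)^2}\int\cdots$ shape --- and keeping the constants $c$, $a(0)$, $a(-1)$ and the total masses of $\rho,\rho_\pm,\sigma,\tau$ mutually consistent; once the dictionary between $(M_+,M_-)$ and the cut Jacobi data is set up cleanly, each implication is essentially forced by the classical half line inverse theorem.
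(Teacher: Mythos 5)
Your proof is correct, and it carries out in detail exactly the route the paper indicates but deliberately omits (the paper only remarks that the result can be extracted from the standard half-line inverse spectral theory of \cite[Sect.~2.1, 2.5]{Teschl}): cutting $J$ at $n=0$, using \eqref{3.2} and the displayed formula for $m_-$ to set up the dictionary between $(M_+,M_-)$ and the data consisting of $J_0^+$, the numbers $a(0),b(0),a(-1)$, and the left half-line matrix, and then matching constants. I checked the bookkeeping ($B=1/(c\,a(0)^2)$, $a(-1)^2=\rho_-(\R)/B$, $c=\rho_+(\R)^{-1}$) and the final resolvent computation $-a(0)^2(M_++M_-)u(0)=1$; all of it is accurate.
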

It is not hard, if somewhat tedious, to extract this result
from some standard material, which is presented, for example, in \cite[Sect.~2.1, 2.5]{Teschl}.
So we will not prove this here.

As already pointed out at the beginning of Sect.~2, the measures $\rho$ are related
to the measures $\nu$ from Proposition \ref{P5.1} by $d\rho = \chi_{\R}(1+t^2)\, d\nu$.
In particular, in terms of $\nu$, condition \eqref{5.8} says that $\nu(\{\infty\} )>0$ and
$\int_{\R} (1+t^2)\, d\nu(t) < \infty$ (and, as always, $\chi_{\R}\, d\nu$ must have bounded,
infinite support).

The following combination of Proposition \ref{P5.1} and Theorem \ref{Tinv} will
be particularly interesting for us here. It determines all $W\in\mathcal R(A)$ that are
associated with a given $H\in\mathcal N(A)$.
\begin{Corollary}
\label{C5.1}
Let $H\in\mathcal H$ satisfy the conditions from Theorem \ref{Tinv}, and assume that,
in addition, $H\in\mathcal N(A)$. Let $H=F_++F_-$ be a decomposition as in Proposition \ref{P5.1},
with $F_{\pm}\in\mathcal Q(A)$ and $F_+=-\overline{F_-}$ almost everywhere on $A$.
Furthermore, assume that the Herglotz representation of $F_+$
can be written in the form
\begin{equation}
\label{5.7}
F_+(z) = \int_{\R} \frac{f(t)\, d\rho(t)}{t-z} ,
\end{equation}
where $f$ has the same meaning as in Proposition \ref{P5.1}, and $\rho$ is the measure from
representation \eqref{5.8} of $H$.

Then there exists a unique $c>0$ so that $cF_{\pm}=M_{\pm}$ are the $m$ functions of a unique
reflectionless potential $W\in\mathcal R(A)$.

Conversely, any $W\in\mathcal R(A)$ for which $M_+^{(W)}+M_-^{(W)}=cH$ for some $c>0$ arises in this way.
\end{Corollary}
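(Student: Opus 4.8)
The plan is to prove Corollary \ref{C5.1} by simply combining the two ingredients already at hand: Proposition \ref{P5.1}, which describes exactly which Herglotz decompositions $H=F_++F_-$ satisfy $F_\pm\in\mathcal Q(A)$ with $F_+=-\overline{F_-}$ on $A$, and Theorem \ref{Tinv}, which tells us when such a pair $(H,F_+)$ comes from a (whole line) Jacobi matrix. So the first step is to note that we are \emph{given} a decomposition as in Proposition \ref{P5.1}; since $H$ satisfies the conditions of Theorem \ref{Tinv}, we have $H(z)=A+Bz+\int_\R d\rho(t)/(t-z)$ with $B>0$ and $\rho$ of bounded infinite support. By Proposition \ref{P5.1}(b), $F_+$ has the form $a_++\int_{\R_\infty}\frac{1+tz}{t-z}f(t)\,d\nu(t)$ with $0\le f\le 1$; the extra assumption \eqref{5.7} says precisely that in this representation the point mass at $\infty$ and the linear term are absent, i.e.\ $a_+=0$, $f(\infty)=0$ (equivalently $\nu_+(\{\infty\})=0$), so that $F_+(z)=\int_\R f(t)\,d\rho(t)/(t-z)$. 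This is exactly the shape required for $F_+$ in the second part of Theorem \ref{Tinv}.

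The second step is to check the remaining hypothesis of Theorem \ref{Tinv}, namely that both $\rho_+:=f\,d\rho$ and $\rho_-:=\rho-\rho_+=(1-f)\,d\rho$ have infinite support. Here is where a small argument is needed rather than a pure citation: since $f=1/2$ Lebesgue-a.e.\ on $A$ and $|A|>0$, the absolutely continuous part of $\rho$ on $A$ has strictly positive density a.e.\ there (this is built into $H\in\mathcal N(A)$ together with $B>0$; more precisely $\operatorname{Im}H(t)>0$ a.e.\ on $A$ as observed in the proof of Proposition \ref{P5.1}), and $f=1/2$ there forces both $\rho_+$ and $\rho_-$ to carry infinitely much mass, in particular infinite support, on any subset of $A$ of positive measure. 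So the hypotheses of Theorem \ref{Tinv} are met, and it yields a unique $c>0$ (namely $c=\rho_+(\R)^{-1}$) and a whole line Jacobi matrix with bounded coefficients so that $cF_\pm=M_\pm$. That $W\in\mathcal R(A)$ is then immediate from $F_+=-\overline{F_-}$ a.e.\ on $A$, since multiplying by the positive constant $c$ preserves this identity and \eqref{1.5} is exactly $M_+=-\overline{M_-}$ a.e.\ on $A$. Uniqueness of $W$ among potentials with these $m$ functions is the one-to-one correspondence from Lemma \ref{L4.1} (applied on both half lines).

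For the converse, suppose $W\in\mathcal R(A)$ and $M_+^{(W)}+M_-^{(W)}=cH$ for some $c>0$. Set $F_\pm:=c^{-1}M_\pm^{(W)}$, so that $F_++F_-=H$ and, since $W$ is reflectionless on $A$, $M_+=-\overline{M_-}$ a.e.\ on $A$, hence $F_+=-\overline{F_-}$ a.e.\ on $A$ and $F_\pm\in\mathcal Q(A)$. Thus $(F_+,F_-)$ is a decomposition of $H$ of the type handled by Proposition \ref{P5.1}, and it remains only to observe that $M_+^{(W)}(z)=\int_\R d\rho_+^W(t)/(t-z)$ has no linear term and no mass at infinity (this is the standard form \eqref{3.2} of an $m$ function of a half line Jacobi matrix), so the same is true of $F_+$ after dividing by $c$; comparing with Proposition \ref{P5.1}(b) this says $a_+=0$ and $f(\infty)=0$, i.e.\ $F_+$ is of the form \eqref{5.7}. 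Hence $W$ is obtained by the construction above, and Theorem \ref{Tinv} additionally pins down $c=\rho_+(\R)^{-1}$, so the $c$ appearing here is the one produced in the forward direction.

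The only genuinely non-mechanical point is the infinite-support verification in the second step; everything else is bookkeeping, matching the representation in Proposition \ref{P5.1}(b) against the hypotheses of Theorem \ref{Tinv} and transporting the relation $F_+=-\overline{F_-}$ through multiplication by the positive constant $c$. One should be slightly careful to state the correspondence as being between $H\in\mathcal N(A)$ satisfying \eqref{5.8} \emph{together with a choice of admissible $f$} (equivalently, of $F_+$) on one side and reflectionless $W$ with $M_++M_-$ proportional to $H$ on the other; with that formulation the statement is an exact combination of the two cited results.
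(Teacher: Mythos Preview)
Your proof is correct and follows essentially the same route as the paper's: combine Proposition \ref{P5.1} with Theorem \ref{Tinv}, verify the infinite-support condition via $f=1/2$ on $A$ and $\textrm{Im }H>0$ there, and for the converse set $F_\pm=c^{-1}M_\pm$ and recognize that $M_+$ already has the form \eqref{3.2}. The paper adds one small remark you omit---that the lower bound $a(n)\ge\alpha>0$ needed for $W\in\mathcal V^C$ follows from Simon--Spencer once $\Sigma_{ac}\neq\emptyset$---but this is a side observation rather than a gap in your argument.
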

\begin{proof}
This follows at once by combining Proposition \ref{P5.1} and Theorem \ref{Tinv} if the following
quick observations are made: First of all,
the conditions about the measures involved having infinite support
are automatically satisfied because by the reflectionless condition, the absolutely continuous part
of $\nu$ on $A$ is equivalent to $\chi_A\, dt$ and $f=1/2$ almost everywhere on $A$. It is also
useful to recall in this context that if $W\in\mathcal R(A)$ for some positive measure set $A\subset\R$,
then we will automatically obtain the inequality $a(n)\ge\alpha>0$ from the fact that $W$
has non-empty absolutely continuous spectrum \cite{Dom,SimSp}.

As for the converse, note that if
$cH= M_++M_-$ for some $c>0$ and $m$ functions $M_{\pm}$ of a potential $W\in\mathcal R(A)$, then
$M_{\pm}\in\mathcal Q(A)$, and we then of course also
have that $H=F_++F_-$, with $F_{\pm}=c^{-1}M_{\pm}\in\mathcal Q(A)$, so we may as well start out with
decomposing $H$ as in Proposition \ref{P5.1}. This, however, forces us to run through the construction
just discussed, so there are no additional reflectionless potentials corresponding to $H$ that might
have been overlooked.
\end{proof}
The previous results are rather baroque in appearance, but, fortunately, the final conclusion is
transparent again. Indeed, the gist of the preceding discussion is contained
in the following recipe: Start with
an $H\in\mathcal N(A)$ of the form \eqref{5.8}. Then, the $W\in\mathcal R(A)$ associated with
this $H$ are in one-to-one correspondence with the functions $f\in L_1(\R, d\rho)$ satisfying
$0\le f\le 1$, $f=1/2$ (Lebesgue) almost everywhere on $A$.

We can now clarify two points that were raised in Sect.~1.3 and in the comment
following Proposition \ref{P4.1}.
\begin{Theorem}
\label{T5.1}
There exists $W\in\mathcal R(A)$ with $\Sigma_{ac}(W_-)\not=\Sigma_{ac}(W_+)$.
\end{Theorem}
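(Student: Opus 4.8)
The plan is to construct the example explicitly using the correspondence developed in Corollary \ref{C5.1}. The idea is that the reflectionless condition on $A$ only pins down the function $f$ (from Proposition \ref{P5.1}) on $A$ itself, namely $f=1/2$ a.e.\ on $A$; off $A$ we are free to choose $f$ with $0\le f\le 1$, and this freedom translates into freedom in the distribution of absolutely continuous spectrum between the two half lines. So I would pick $A$ and a background Herglotz function $H\in\mathcal N(A)$ of the form \eqref{5.8} whose measure $\rho$ has a nontrivial absolutely continuous part \emph{outside} $A$, and then choose $f$ so that on (part of) the complement of $A$ we have $f\equiv 1$ (say), forcing that piece of a.c.\ spectrum entirely onto $W_+$ and away from $W_-$. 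Concretely, take $A=(-1,1)$ and let $\rho$ consist of Lebesgue measure on $(-1,1)\cup(2,3)$ together with enough point masses (or a suitable $B>0$ and tail) to meet the conditions of Theorem \ref{Tinv}; then set $f=1/2$ on $(-1,1)$, $f=1$ on $(2,3)$, and $f=1/2$ elsewhere.

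First I would verify that $H$ as described genuinely lies in $\mathcal N(A)$ and satisfies the hypotheses of Theorem \ref{Tinv}: $\mathrm{Re}\,H(t)=0$ for a.e.\ $t\in A$ holds because on $A$ the measure $\rho$ is purely absolutely continuous with the right density (one gets $\mathrm{Re}\,H=0$ from the standard boundary-value computation), and the growth/support conditions are arranged by hand. Next, I would check that the chosen $f$ satisfies the requirements in Proposition \ref{P5.1}(b) and the infinite-support conditions in Theorem \ref{Tinv}: both $\rho_+ = f\,d\rho$ and $\rho_- = (1-f)\,d\rho$ have infinite support because each retains the Lebesgue-measure piece on $(-1,1)$ (where $f=1-f=1/2$). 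Then Corollary \ref{C5.1} delivers a genuine reflectionless potential $W\in\mathcal R(A)$ whose $m$ functions are proportional to $F_\pm$, where $F_+(z)=\int f\,d\rho/(t-z)$ and $F_-(z)=\int(1-f)\,d\rho/(t-z)$.

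Finally I would read off the essential supports. Since $\mathrm{Im}\,M_\pm(t)$ equals (up to the constant $c$ and a factor of $\pi$) the a.c.\ density of $\rho_\pm$, we have $\Sigma_{ac}(W_+) \supset (2,3)$ while $\Sigma_{ac}(W_-)$ has null intersection with $(2,3)$, because $1-f\equiv 0$ there. (The a.c.\ part of $\rho_-$ is supported, up to null sets, on $(-1,1)$, whereas that of $\rho_+$ is supported on $(-1,1)\cup(2,3)$.) Hence $\Sigma_{ac}(W_-)\ne\Sigma_{ac}(W_+)$, as claimed.

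The main obstacle I anticipate is not the construction itself but the bookkeeping needed to be sure $H$ satisfies \emph{all} the hypotheses of Theorem \ref{Tinv} simultaneously — in particular that $B>0$ and that $\chi_\R\,d\nu$ (equivalently $\rho$) has bounded infinite support — while still arranging the a.c.\ piece on $(2,3)$ and keeping $\mathrm{Re}\,H(t)=0$ a.e.\ on $A$. One has to choose the discrete part of $\rho$ (or the linear term $Bz$) carefully so these conditions do not conflict; adding a convergent sequence of point masses accumulating at a point outside $[-1,1]\cup[2,3]$, say, together with a small positive $B$, should do it, but verifying $\mathrm{Re}\,H=0$ on $A$ requires that these added pieces contribute only a real constant (plus the $Bz$ term, which is purely imaginary on $\R$ only in the limit — actually $Bz$ has $\mathrm{Re}(Bt)=Bt\ne 0$, so one must instead absorb this into the constant $A$, i.e.\ one should be careful and perhaps avoid the $Bz$ term by relying on the point masses to supply infinite support and a separate mass at $\infty$ in the $\nu$ picture). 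Sorting out this last point cleanly — getting $\mathrm{Re}\,H\equiv 0$ on $A$ with a legitimate $H$ of the required form — is where the real care is needed; everything else follows mechanically from Corollary \ref{C5.1} and the identification of imaginary boundary values with a.c.\ densities.
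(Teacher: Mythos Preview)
Your overall strategy is exactly the paper's: use the freedom in the function $f$ from Proposition~\ref{P5.1} to build, via Corollary~\ref{C5.1}, a $W\in\mathcal R(A)$ whose half-line a.c.\ spectra differ. The difference lies in how you produce the background function $H$, and here your proposal has a genuine gap that you yourself flag but do not close.

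You try to construct $H$ by prescribing its measure $\rho$ (Lebesgue on $(-1,1)\cup(2,3)$, plus extras). But the condition $H\in\mathcal N(A)$ is a condition on $\textrm{Re}\,H(t)$, which is the Hilbert transform of $\textrm{Im}\,H$ (plus the affine part). Specifying $\rho$ fixes $\textrm{Im}\,H(t)$ on the real line, and the resulting Hilbert transform will \emph{not} vanish on $(-1,1)$ for your choice; nor can you cancel it with the $A+Bz$ term, since $Bt$ is not constant on $(-1,1)$. So the construction as written does not actually yield an $H\in\mathcal N(A)$, and patching this would require solving an inverse problem for the Hilbert transform --- far more than the result warrants.

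The paper avoids this entirely by reversing the order: rather than building $H$ from $\rho$, it starts from an explicit $H$ already known to lie in $\mathcal N(B)$ for a set $B$ strictly larger than $A$, for instance $H(z)=(z^2-4)^{1/2}$ with $B=[-2,2]$. One then picks any $A\subset B$ with $|A|>0$, $|B\setminus A|>0$, and takes $f=\tfrac12\chi_A$. This $f$ satisfies the hypotheses of Proposition~\ref{P5.1} and Corollary~\ref{C5.1}, and since $f>0$ only on $A$ while $1-f>0$ on all of $B$, one reads off $\Sigma_{ac}(W_+)=A$ and $\Sigma_{ac}(W_-)\supset B$. No verification of $\textrm{Re}\,H=0$ is needed because it is built in from the start. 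Your choice $f=1$ versus the paper's $f=0$ on the extra piece is immaterial; the key simplification is to write down $H$ directly rather than its measure.
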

\begin{proof}
Given the previous work, this is very easy:
Fix an $H \in\mathcal N(B)$ that also satisfies the conditions from Theorem \ref{Tinv}.
$H(z)=(z^2-4)^{1/2}$ would be one example (among many) for such an $H$; here, we can let $B=[-2,2]$.
Fix a subset $A\subset B$ with $|A|>0$,
$|B\setminus A|>0$, and let
\[
f = \frac{1}{2}\, \chi_A ,
\]
and define $F_+$ as \eqref{5.7}, using this $f$.
By Proposition \ref{P5.1} and Corollary \ref{C5.1},
the function $F_+\in\mathcal Q(A)$ corresponds to (unique) reflectionless
Jacobi coefficients $W\in\mathcal R(A)$. Since $1-f>0$ on all of $B$, but $f>0$ only on $A$,
we have that
\[
\Sigma_{ac}(W_+)=A, \quad\quad \Sigma_{ac}(W_-)\supset B .
\]
\end{proof}
When dealing with functions from $\mathcal N(A)$, the exponential Herglotz representation is
a very useful tool.
Therefore, we now quickly review some basic facts; see \cite{ArD1,ArD2} for a (much) more
detailed treatment of this topic.

First of all, if $H\in\mathcal H$, we can take a holomorphic logarithm, and if we choose the
branch with $0<\textrm{Im}(\ln H)<\pi$ (say), we again obtain a Herglotz function. Moreover,
since $\textrm{Im}(\ln H)$ is bounded, the measure from the Herglotz representation is purely
absolutely continuous. Thus we can recover $H$ from $\textrm{Im}(\ln H(t))$, up to a multiplicative
constant. More specifically,
given $H\in\mathcal H$, we can define
\begin{equation}
\label{5.5}
\xi(t) = \frac{1}{\pi} \lim_{y\to 0+} \textrm{Im} \left( \ln H(t+iy) \right) .
\end{equation}
The limit exists almost everywhere and $0\le \xi(t)\le 1$. We have that
\[
H(z) = |H(i)| \exp \left( \int_{-\infty}^{\infty} \left( \frac{1}{t-z} - \frac{t}{t^2+1}
\right) \xi(t)\, dt \right) .
\]
\begin{Proposition}
\label{P5.2}
Let $H\in\mathcal H$. Then
$H\in\mathcal N(A)$ if and only if $\xi(t)=1/2$ for almost every $t\in A$.
\end{Proposition}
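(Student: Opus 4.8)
The plan is to reduce the stated equivalence to a purely pointwise (almost everywhere) statement about boundary values, namely that $\mathrm{Re}\, H(t)=0$ holds exactly where $\xi(t)=1/2$.

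First I would recall the relevant boundary-value facts. Since $H\in\mathcal H$, the boundary value $H(t)=\lim_{y\to0+}H(t+iy)$ exists and is finite for a.e.\ $t\in\mathbb R$. With the branch normalized so that $0<\mathrm{Im}(\ln H)<\pi$, the function $\ln H$ is again a Herglotz function, so likewise $(\ln H)(t)=\lim_{y\to0+}(\ln H)(t+iy)$ exists and is finite for a.e.\ $t$. By continuity of the exponential, $\exp((\ln H)(t))=H(t)$ at every point where both limits exist; in particular $H(t)\neq 0$ for a.e.\ $t$, and $(\ln H)(t)$ is a logarithm of $H(t)$ whose imaginary part lies in $[0,\pi]$, being a limit of values in $(0,\pi)$. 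Hence, for a.e.\ $t$, the quantity $\xi(t)=\frac1\pi\,\mathrm{Im}(\ln H)(t)$ from \eqref{5.5} is exactly $\frac1\pi$ times the argument of the nonzero boundary value $H(t)\in\overline{\mathbb C^+}$, measured in $[0,\pi]$. These are standard ingredients of the exponential Herglotz representation; see \cite{ArD1,ArD2}.

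Then the equivalence is immediate. If $\xi(t)=1/2$, then $(\ln H)(t)=\ln|H(t)|+i\pi/2$, so $H(t)=i|H(t)|$ is purely imaginary and $\mathrm{Re}\,H(t)=0$. Conversely, if $\mathrm{Re}\,H(t)=0$ at a point where $H(t)$ exists and is nonzero, then since $\mathrm{Im}\,H(t)\ge 0$ we must in fact have $\mathrm{Im}\,H(t)>0$; thus $H(t)=i|H(t)|$, $(\ln H)(t)=\ln|H(t)|+i\pi/2$, and $\xi(t)=1/2$. Applying this on the set $A$: $H\in\mathcal N(A)$ means $\mathrm{Re}\,H(t)=0$ for a.e.\ $t\in A$, which by the pointwise equivalence is the same as $\xi(t)=1/2$ for a.e.\ $t\in A$.

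There is no genuinely hard step here. The only points requiring a little care are the standard facts that $\ln H\in\mathcal H$ under the stated normalization and that its boundary values agree (off a Lebesgue null set) with a logarithm of the boundary values of $H$; the remainder is elementary arithmetic in the closed upper half plane. One should merely stay aware of the exceptional null set on which $H(t)$ fails to exist or vanishes, but it does not affect any of the conclusions.
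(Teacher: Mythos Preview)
Your proof is correct and follows the same line the paper intends: the paper simply declares the equivalence ``obvious from the definition of $\xi$'' without further argument, and what you have written is precisely the natural unpacking of that remark. Your care about the null set where $H(t)$ fails to exist or might vanish, and the observation that the boundary value lies in $\overline{\C^+}$ so that $\mathrm{Re}\,H(t)=0$ with $H(t)\neq 0$ forces $\mathrm{Im}\,H(t)>0$, are exactly the small points one needs to make the ``obvious'' claim honest.
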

This is obvious from the definition of $\xi$, but it is also exceedingly useful because
it expresses the condition of belonging to $\mathcal N(A)$ as a \textit{local }condition
on the \textit{imaginary }part of a (new) Herglotz function.
The original requirement that $\textrm{Re }H=0$
refers to the Hilbert transform of $\textrm{Im }H$ and thus is not local.
\begin{Theorem}
\label{T5.2}
There exist potentials $W^{(j)}\in\mathcal R(A_j)$ ($j=1,2$) so that
$W_+^{(1)}=W_+^{(2)}$, but $W^{(1)}\not= W^{(2)}$.
\end{Theorem}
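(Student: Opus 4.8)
The plan is to pass to $m$-functions and apply Corollary \ref{C5.1} twice, to two carefully matched inputs. By Lemma \ref{L4.1}, the half line $m$-function $M_\pm=m_\pm^W(0,\cdot)$ determines and is determined by the corresponding half line restriction $W_\pm$; so it is enough to produce bounded whole line potentials $W^{(1)},W^{(2)}$, reflectionless on sets $A_1,A_2$ of positive measure, with $M_+^{(1)}=M_+^{(2)}$ but $M_-^{(1)}\neq M_-^{(2)}$. One should observe at the outset that $A_1$ and $A_2$ cannot essentially coincide: reflectionlessness forces $M_-^{(j)}(t)=-\overline{M_+(t)}$ for a.e.\ $t\in A_j$, so if $|A_1\cap A_2|>0$ then $M_-^{(1)}$ and $M_-^{(2)}$ agree on a set of positive measure, hence everywhere, contradicting $W^{(1)}\neq W^{(2)}$. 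Thus the construction must use disjoint $A_j$.

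I would take $A_1=[-2,-1]$, $A_2=[1,2]$, and the two auxiliary functions $H_1(z)=\sqrt{z^2-4}$ and $H_2(z)=2\sqrt{z^2-4}$. Both are of the form \eqref{5.8} (with $B=1$ and $B=2$) and both lie in $\mathcal N([-2,2])\subseteq\mathcal N(A_1)\cap\mathcal N(A_2)$; their measures in \eqref{5.8} are $\rho_1$ (the semicircle measure, with density $\pi^{-1}\sqrt{4-t^2}$ on $(-2,2)$) and $\rho_2=2\rho_1$; and $H_1\neq H_2$. Let $\sigma$ be the absolutely continuous measure with density $\tfrac12\rho_1'$ on $(-2,1)$ and $\rho_1'$ on $[1,2)$. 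Then $\sigma\le\rho_1$ and $\sigma\le\rho_2$, so $f_j:=d\sigma/d\rho_j$ obeys $0\le f_j\le 1$; moreover $f_1=\tfrac12$ a.e.\ on $A_1$ (there $\sigma'=\tfrac12\rho_1'$) and $f_2=\tfrac12$ a.e.\ on $A_2$ (there $\sigma'=\rho_1'=\tfrac12\rho_2'$); and $\sigma$, $\rho_1-\sigma$, $\rho_2-\sigma$ all have infinite support. Corollary \ref{C5.1} applied to $(H_1,f_1)$ then yields a unique $W^{(1)}\in\mathcal R(A_1)$ whose $m$-functions are $c_1F_\pm^{(1)}$, where $F_+^{(1)}(z)=\int f_1\,d\rho_1/(t-z)=\int d\sigma/(t-z)$ and $c_1=\sigma(\R)^{-1}$; applied to $(H_2,f_2)$ it yields $W^{(2)}\in\mathcal R(A_2)$ whose $m$-functions are $c_2F_\pm^{(2)}$, where $F_+^{(2)}(z)=\int f_2\,d\rho_2/(t-z)=\int d\sigma/(t-z)$ and $c_2=\sigma(\R)^{-1}=c_1$.

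Since $F_+^{(1)}=F_+^{(2)}$ (both are the Cauchy transform of the same $\sigma$) and $c_1=c_2$, we get $M_+^{(1)}=M_+^{(2)}$, hence $W_+^{(1)}=W_+^{(2)}$. On the other hand $M_-^{(1)}-M_-^{(2)}=c_1(H_1-H_2)=-c_1\sqrt{z^2-4}\not\equiv 0$, so $M_-^{(1)}\neq M_-^{(2)}$ and therefore $W^{(1)}\neq W^{(2)}$ by Lemma \ref{L4.1}. As $W^{(j)}\in\mathcal R(A_j)$, this is exactly the assertion.

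The step I expect to require the most care is the matching carried out in the second paragraph: one must exhibit a \emph{single} measure $\sigma$ that is dominated by \emph{both} $\rho_1$ and $\rho_2$ while producing the value $\tfrac12$ for $f_j$ precisely on the \emph{correct} set $A_j$. This is what forces a comparison between $\rho_1$ and $\rho_2$ on $A_1\cup A_2$, and the choice $\rho_2=2\rho_1$ (i.e.\ $H_2=2H_1$) is the cheapest way to guarantee both $\tfrac12\rho_1'\le\rho_2'$ and $\tfrac12\rho_2'\le\rho_1'$ there. The remaining hypotheses of Theorem \ref{Tinv} and Corollary \ref{C5.1} (the infinite-support conditions, $B>0$, boundedness of the resulting coefficients) are then immediate. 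Everything is to be read, as usual, in the Jacobi setting.
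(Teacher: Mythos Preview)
Your proof is correct and follows the same overall strategy as the paper: produce two pairs $(H_j,F_+)$ that share the \emph{same} $F_+$ but have different $H_j$, then invoke Corollary~\ref{C5.1} (using that the normalizing constant $c$ depends only on $F_+$) to get two whole line potentials with identical right half line $m$-functions but different left half line ones.

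The difference lies in the specific choice of $H_1,H_2$. The paper takes
\[
H_1(z)=(z+1)^{1/2}z^{\epsilon}(z-1)^{1/2-\epsilon},\qquad
H_2(z)=(z+1)^{1/2-\epsilon}z^{\epsilon}(z-1)^{1/2},
\]
with $A_j=I_j=(-1,0),(0,1)$, and must then verify the pointwise estimate
$\textrm{Im }H_2(t)/\textrm{Im }H_1(t)\le 2$ on $I_2$ (and its mirror on $I_1$) to guarantee $0\le f_j\le 1$. Your choice $H_2=2H_1$ with $H_1=\sqrt{z^2-4}$ makes the density comparison trivial (it reduces to $1\le 2$), so no estimate like the paper's \eqref{5.3} is needed. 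This is a genuine simplification. The price you pay is that your $H_1$ and $H_2$ are constant multiples of one another, hence the diagonal Green functions $-B_j/H_j$ of the two operators coincide; the paper's example has the additional feature that even these differ. But the theorem as stated does not ask for that, so nothing is lost.
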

\begin{proof}
We will again work with the Herglotz functions. Consider the following pair of functions:
\begin{align*}
H_1(z) & = (z+1)^{1/2} z^{\epsilon} (z-1)^{1/2-\epsilon} , \\
H_2(z) & = (z+1)^{1/2-\epsilon} z^{\epsilon} (z-1)^{1/2} .
\end{align*}
Here, $\epsilon>0$ is small, and powers of the type $w^{\alpha}$ with $w\in\C^+$ and $\alpha>0$
are defined as $w^{\alpha}=e^{\alpha\ln w}$, with $0<\textrm{Im}(\ln w) < \pi$.

If written in this way, it is not completely obvious that $H_j$ maps $\C^+$ to $\C^+$ again,
and the motivation for these particular choices also remains mysterious. Things become clear,
however, if we work with the exponential Herglotz representation.

If we let
\begin{align*}
\xi_1 & = \chi_{(-\infty,-1)} + \frac{1}{2}\chi_{(-1,0)} +
\left( \frac{1}{2} - \epsilon \right) \chi_{(0,1)} , \\
\xi_2 & = \chi_{(-\infty,-1)} + \left( \frac{1}{2}+\epsilon \right)
\chi_{(-1,0)} + \frac{1}{2} \chi_{(0,1)} ,
\end{align*}
and choose the multiplicative constants so that $H_j(z)=z +O(1)$ as $|z|\to\infty$,
then we obtain the functions $H_1$, $H_2$ introduced above.
In particular, we now see that indeed $H_j\in\mathcal H$.
Proposition \ref{P5.2} shows that
\[
H_j\in\mathcal N(I_j) , \quad\quad I_1=(-1,0), \quad I_2=(0,1)
\]
(this of course is also immediate from the original definition of $H_j$),
and $\xi_1$ on $I_2$ (and $\xi_2$ on $I_1$)
are small perturbations of the value $1/2$ that would correspond to a reflectionless $\xi$.

The crucial fact about $H_1$, $H_2$ is the following:
If $0<t<1$ and $0<\epsilon< \frac{1}{\pi}\arccos(1/2)$, then
\begin{equation}
\label{5.3}
\frac{\textrm{Im }H_2(t)}{\textrm{Im }H_1(t)} =
\frac{\textrm{Im }H_1(-t)}{\textrm{Im }H_2(-t)} \le 2 .
\end{equation}
To prove \eqref{5.3}, just compute the
ratios, using the definitions of $H_1$, $H_2$. Indeed, a straightforward calculation reveals that
these are equal to (one another and)
\[
\left( \frac{1-t}{1+t} \right)^{\epsilon} \frac{1}{\cos \epsilon\pi} \le \frac{1}{\cos \epsilon\pi} .
\]

Define $F_+\in\mathcal H$ as follows:
\begin{equation}
\label{5.4}
F_+(z) = \frac{1}{2\pi} \int_{I_1} \frac{\textrm{Im }H_1(t)\, dt}{t-z} +
\frac{1}{2\pi} \int_{I_2} \frac{\textrm{Im }H_2(t)\, dt}{t-z} .
\end{equation}
Now \eqref{5.3} shows that we can write $F_+$ as
\[
F_+(z) = \frac{1}{\pi} \int_{\R} \frac{f_j(t)\textrm{Im }H_j(t)\, dt}{t-z}
\]
for $j=1,2$, and in both cases $f_j$ satisfies $0\le f_j\le 1$, $f_j=1/2$ on $I_j$.
Indeed, from a comparison with \eqref{5.4}, we learn that
\[
f_1(t) = \begin{cases} 1/2 & t\in I_1 \\
\textrm{Im }H_2(t)/(2\,\textrm{Im }H_1(t)) & t\in I_2 \end{cases} ,
\]
and \eqref{5.3} ensures that $f_1\le 1$. Of course, a similar argument works for $f_2$.
Note also that the measures $\rho_j$ associated with $H_j$ are purely absolutely continuous
and supported by $(-1,1)$.

We have thus verified that both $(H_1,F_+)$ and $(H_2,F_+)$ satisfy the assumptions of Corollary \ref{C5.1}
(with $A=I_1$ and $A=I_2$, respectively).
Therefore, there exist potentials $W^{(j)}\in\mathcal R(I_j)$ corresponding
to these data. Since both potentials have the same positive half line $m$ function ($M_+^{(j)}=cF_+$),
it follows that $W_+^{(1)}=W_+^{(2)}$. On the other hand, the whole line potentials can not be identical
because $H_1\not= H_2$.
\end{proof}
\section{Small limit sets}
Recall the general philosophy behind Theorem \ref{TDR}: We assume that
\[
\sigma_{ess}(V)=\Sigma_{ac}(V) (=: E)
\]
and the closed set $E$ is also essentially closed, that is, $(x-r,x+r)\cap E$ is of positive
Lebesgue measure for every $x\in E$, $r>0$. With this latter assumption in place,
$\Sigma_{ac}=E$ will now imply that $\sigma_{ac}=E$ also.

Thus Theorem \ref{TBP} together with the obvious inclusion \eqref{1.1} imply that
\begin{equation}
\label{condW}
\Sigma_{ac}(W_{\pm}) = \sigma(W) = E, \quad \quad W\in \mathcal R (E)
\end{equation}
for every $W\in\omega(V)$. These are strong conditions and thus we can hope
to obtain rather detailed information on the possible potentials $W$, at least for nice sets $E$.

\begin{proof}[Proof of Theorem \ref{TDR}]
The following strategy suggests itself; see also \cite{GKT,GYud} and \cite[Chapter 8]{Teschl}
for very similar arguments in a similar context and especially \cite{Craig} for one of the
earliest uses of these ideas.

Given $W$ satisfying \eqref{condW}, consider the Green function of the whole line operator:
\[
G(z) = \langle \delta_0 , (J_W-z)^{-1} \delta_0 \rangle = \int_E \frac{d\mu(t)}{t-z} ,
\]
where $d\mu$ is the spectral measure of $J_W$ and the vector $\delta_0$.
It follows from this representation that $G(t)>0$ if $t<\alpha_0$, $G(t)<0$
if $t>\beta_{N+1}$, and $G(t)$ is (real and) strictly increasing in each gap
$(\beta_j,\alpha_j)$ ($j=1,\ldots ,N$).

We will work with the function $H(z)=-G^{-1}(z)$ instead, because, according to Theorem \ref{Tinv},
this is the function that has the decomposition $cH=M_++M_-$.

Of course, $H\in\mathcal H$, and the properties of $G$ observed above translate into
corresponding properties of $H$.
Since $W\in\mathcal R(E)$, hence $M_{\pm}\in\mathcal Q(E)$, we can also deduce that
$H\in\mathcal N(E)$. We again work with the exponential Herglotz representation of $H$ and introduce
\[
\xi(t) = \frac{1}{\pi} \lim_{y\to 0+} \textrm{Im}\left( \ln H(t+iy) \right) ,
\]
as in \eqref{5.5}. What we have learned above about $G$ and $H$ now says that
$\xi(t)=1$ if $t<\alpha_0$, $\xi(t)=0$ if $t>\beta_{N+1}$, $\xi(t)=1/2$ if
$t\in E$, and for each $j=1,2, \ldots , N$, there exists $\mu_j\in [\beta_j,\alpha_j]$ so that
\[
\xi(t) = \begin{cases} 0 & \beta_j<t<\mu_j \\ 1 & \mu_j<t<\alpha_j
\end{cases} .
\]
In other words, $\mu_j$ is defined as the unique point in the $j$th gap $(\beta_j,\alpha_j)$
for which $G(\mu_j)=0$, should there be such a point. If that is not the case, we let $\mu_j=\beta_j$
or $\mu_j=\alpha_j$, depending on which sign the values of $G$ on $(\beta_j,\alpha_j)$ have.

So, given the $\mu_j$'s, we have complete information about $\xi(t)$, and thus we can recover $H$,
up to a constant factor. If we pick this factor so that $H(z)=z+O(1)$ for large $|z|$, we obtain that
\begin{equation}
\label{formH}
H(z) = \sqrt{(z-\alpha_0)(z-\beta_{N+1})} \prod_{j=1}^N \frac{\sqrt{(z-\beta_j)(z-\alpha_j)}}{z-\mu_j} .
\end{equation}
Compare \cite[eq.\ (5.11)]{Craig}, \cite[Lemma 3.4]{GKT}, or \cite[Lemma 8.3]{Teschl}.

Let us now use Corollary \ref{C5.1} to find all potentials $W\in\mathcal R(E)$ that correspond to this $H$
and satisfy the additional condition that
\begin{equation}
\label{condW1}
\sigma(W)=E ;
\end{equation}
compare \eqref{condW}.
In other words, we must find all $F_+\in\mathcal Q(E)$ as in \eqref{5.7} which correspond to whole line
potentials $W$ with $\sigma(W)=E$. Clearly, $F_+$ is determined by $f$ from \eqref{5.7}, so it suffices
to discuss the possible choices for this function.

The measure $\rho$ associated with $H$ is purely absolutely continuous on $E$; this follows readily
from \eqref{formH}. Thus, on $E$, there is no choice: we must take $f=1/2$ by Proposition \ref{P5.1}.

This does not completely define $f$ almost everywhere with respect to $\rho$ because
$\rho(\{ \mu_j\} )>0$ for every $j$ for which $\mu_j\in (\beta_j, \alpha_j)$. This, too, follows
directly from \eqref{formH}. I now claim that only the choices $f(\mu_j)=0$ and $f(\mu_j)=1$ are consistent
with \eqref{condW1}. Indeed, if we had $0<f(\mu_j)<1$, then, since $\rho=\rho_++\rho_-$, both
half line problems would have an eigenvalue at $\mu_j$. This is equivalent to $f_{\pm}(0,\mu_j)=0$,
where $f_{\pm}$ are the solutions of \eqref{je} with $z=\mu_j$ that are square summable near
$\pm\infty$. It would then follow that $f_+$ and $f_-$ are actually multiples of one another,
that is, there exists a solution $f(\cdot, \mu_j)\in\ell_2(\Z)$ and hence $\mu_j\in\sigma(W)$.
This contradicts \eqref{condW1}.
Note also that no such problem occurs if $f(\mu_j)=0$ or $1$ because then $f_-(0,\mu_j)=0$,
$f_+(0,\mu_j)\not= 0$ or conversely, so there is no solution which is in $\ell_2(\Z)$.

We now summarize our discussion so far and add the (rather obvious) converse statement:
Given a set $E$ as in the statement of Theorem \ref{TDR}, the potentials $W$ that
satisfy \eqref{condW} are in natural one-to-one correspondence to the $N$-tuples
$(\widehat{\mu}_1, \ldots, \widehat{\mu}_N)$, where $\widehat{\mu}_j=(\mu_j, s_j)$ with
$\mu_j\in [\beta_j, \alpha_j]$ and $s_j=0, 1$, and if $\mu_j=\alpha_j$ or $\mu_j=\beta_j$,
then $(\mu_j,0)=(\mu_j,1)$ are identified.

More precisely, given such an $N$-tuple $(\widehat{\mu}_1, \ldots, \widehat{\mu}_N)$,
we define $H\in\mathcal N(E)$ by \eqref{formH} and $F_+$ as in \eqref{5.7}, with
$f=1/2$ on $E$ and $f(\mu_j)=s_j$. In this context, also
recall that $\rho(\{ \mu_j\} )=0$ if $\mu_j=\beta_j$ or $\mu_j=\alpha_j$, so in this case,
it wouldn't have been necessary to specify $f(\mu_j)$, and thus $s_j$ becomes irrelevant.

We have now defined $f$ almost everywhere with respect to
$\rho$ in such a way that the pair
$(H,F_+)$ satisfies the conditions from Corollary \ref{C5.1}.

From the construction, it is clear that the potentials $W$ obtained in this way
satisfy $W\in\mathcal R(E)$,
$\sigma_{ess}(W)=E$. Moreover, a point $t\notin E$ can only be an eigenvalue if
either $M_+(t)=-M_-(t)$ or both $M_+$ and $M_-$ have a pole at $t$.
Indeed, this is the condition for the two half line $\ell_2$ solutions $f_{\pm}(\cdot, t)$
to match at $n=0$. However, the second condition leads us back to the $\mu_j$, and we have been careful
to make sure that these are not eigenvalues. The first condition would imply that $H(t)=0$, but
\eqref{formH} shows that this does not happen outside $E$. We conclude that
the construction just described does produce a $W$ satisfying \eqref{condW}.

Conversely, our discussion
above has shown that if $W$ satisfies \eqref{condW}, then it arises in this way.

The set of parameters $\widehat{\mu}=(\widehat{\mu}_1,\ldots , \widehat{\mu}_N)$ can
be naturally identified with an $N$-dimensional torus $S^1\times \cdots \times S^1$. Furthermore, it
is quite clear from the construction that then the map
$\widehat{\mu}\mapsto M_+\in\mathcal H$ becomes a continuous
injective map if we, as usual, endow $\mathcal H$ with the topology of uniform convergence on
compact subsets of $\C^+$. We only need to verify that $\rho(\{ \mu_j \} )\to 0$ as
$\mu_j\to\beta_j$ or $\alpha_j$, in order to rule out problems at those points $\widehat{\mu}$
for which $\mu_j=\beta_j$ or $\alpha_j$ for some $j$.
However, this claim follows immediately from \eqref{formH}
if we make use of the general formula
\[
\rho(\{ x \} ) = -i\lim_{y\to 0+} yH(x + iy) .
\]

If we now define $\mathcal T_N(E)$ as the sets of potentials $W$
satisfying \eqref{condW}, then, as just explained, we have a bijection
\[
S^1 \times \cdots \times S^1 \to \mathcal T_N(E) , \quad\quad \widehat{\mu} \mapsto W .
\]
Since the map $M_+\mapsto W$ is continuous by Lemma \ref{L4.1} and Proposition \ref{P4.1}(e),
this correspondence is continuous, too, and $\mathcal T_N(E)$ is homeomorphic to an $N$-dimensional
torus, as claimed.

This proves Theorem \ref{TDR}, except for the additional claim that $\mathcal T_N(E)$ coincides
with the set of finite gap potentials. This just follows from the fact that the $m$ functions
$M_+$ that we obtain here exactly coincide with those of the finite gap potentials with spectrum $E$, and in
both cases, by the reflectionless condition, these determine the potential uniquely.
See \cite[Sect.~8.3]{Teschl}, especially formula (8.87) and Theorem 8.17.
\end{proof}
This contains Corollary \ref{C1.2} as the special case $N=0$. However, it is also instructive
to run through the above argument again to explicitly confirm the claim made in \eqref{1.8}.
So assume now that
\[
\sigma(W)=[-2,2], \quad\quad W\in\mathcal R(-2,2) .
\]
There is no gap and thus \eqref{formH} simplifies to
\[
H(z) = \sqrt{(z+2)(z-2)} .
\]
The only $F_+\in\mathcal Q(-2,2)$ compatible with this
$H$ and satisfying the assumptions of Corollary \ref{C5.1}
is given by
\[
F_+(z) = \frac{1}{2\pi} \int_{-2}^2 \frac{\sqrt{4-t^2}}{t-z}\, dt .
\]
Since $\chi_{(-2,2)}(t)\sqrt{4-t^2}\, dt/(2\pi)$ is the spectral measure
of the free Jacobi matrix $a=1$, $b=0$ on $\Z_+$, we now obtain \eqref{1.8}.

Although this is somewhat off topic here, let me also briefly describe how one can continue from here
if a deeper analysis of the (finite gap) potentials $W\in\mathcal T_N(E)$ is desired. The standard theory
proceeds as follows: Instead of cutting the whole line into two half lines at $n=0$, we can of course
also cut at an arbitrary $n\in\Z$. In this way, we obtain a sequence of
$n$-dependent parameters $\widehat{\mu}(n)$
for every fixed $W\in\mathcal T_N(E)$. The crucial fact is this: by conjugating with the Abel-Jacobi map
of the Riemann surface of $w^2=\prod (z-\alpha_j)(z-\beta_{j+1})$, the map evolving the $\widehat{\mu}(n)$
becomes translation on another $N$-dimensional torus (the real part of the Jacobi variety). This
proves that $W$ is quasi-periodic and gives a rather explicit description. See the references mentioned
above for more detailed information \cite{BGHT,Dub,KrRem,McK,Mum,Teschl,vMoer,vMoerMum}.
\section{Two counterexamples}
As our first illustrative example relevant to Theorem \ref{TBP}, we just recall the properties
of a model investigated in depth by Stolz in \cite{Sto}. In fact, Stolz discusses a general class of slowly
oscillating potentials, but we will only consider the potential
\[
V(n) = \cos \sqrt{n} .
\]
Then \cite[Theorem 1]{Sto}, applied to the case at hand, says that
\[
\sigma_{ess}(V)=[-3,3], \quad \Sigma_{ac}(V)=[-1,1]
\]
(actually, the latter statement is not given in literally this form, but it is easily
extracted from \cite{Sto}). On the other hand, since $V(n)$ is almost constant on long intervals
for all large $n$, it is clear that
\[
\omega(V) = \left\{ W^{(a)}(n) \equiv a : -1\le a \le 1 \right\} .
\]
These limit potentials potentials satisfy
\[
\sigma(W^{(a)})=\Sigma_{ac}(W_{\pm}^{(a)}) = [-2+a, 2+a], \quad\quad W^{(a)}\in\mathcal R(-2+a,2+a) .
\]
Note that for all $a\in [-1,1]$, we have that
\[
\Sigma_{ac}(V)\subset \Sigma_{ac}(W_{\pm}^{(a)}), \quad\quad \sigma(W^{(a)})\subset \sigma_{ess}(V) ,
\]
as asserted by Corollary \ref{C1.1} and \eqref{1.1}. Both inclusions are strict.

The potentials $W^{(a)}$ have two additional properties, which are not guaranteed by
Theorem \ref{TBP}: They are reflectionless on
the larger (than $\Sigma_{ac}(V)$) sets $\Sigma_{ac}(W_{\pm}^{(a)})=[-2+a,2+a]$, and we can obtain
$\Sigma_{ac}(V)$ as
\begin{equation}
\label{7.1}
\Sigma_{ac}(V) = \bigcap_{W\in\omega(V)} \Sigma_{ac}(W_{\pm}) .
\end{equation}
Neither of these properties holds in general. In fact, it could also be argued that \eqref{7.1}
doesn't make much sense in general because $\Sigma_{ac}$ is only determined up to sets of measure
zero. This objection, however, is somewhat beside the point because the following example will
reveal more serious problems.

The basic idea behind this second example
is to use inverse scattering theory to come up with a suitable
(whole line) potential $W^{(0)}=(a_0,b_0)$ that will be the fundamental building block.
I should also point out that Molchanov \cite{Mol} has analyzed similar examples in great detail,
using related ideas.

We will not enter a serious discussion of inverse scattering theory here. Rather, we will
just extract what we need and
refer the reader to \cite{GerC,VolYud} and especially \cite[Chapter 10]{Teschl}
for a thorough treatment. However, I will mention one (well known) basic fact in
Proposition \ref{P7.1} below, in order to motivate and illuminate the construction.

Inverse scattering theory yields
the existence of (whole line) Jacobi coefficients $W^{(0)}=(a_0,b_0)$ with
the following set of properties: $\Sigma_{ac}(W_{\pm}^{(0)})=[-2,2]$,
\[
a_0(n) \to 1, \quad b_0(n)\to 0 \quad\quad( |n|\to\infty ) ,
\]
and for $\varphi\in (0,\pi/2)$, the Jacobi equation
\begin{equation}
\label{je3}
a_0(n)f(n+1)+a_0(n-1)f(n-1)+b_0(n)f(n)=2\cos\varphi\, f(n)
\end{equation}
has a solution
$f(n,\varphi)$ satisfying the asymptotic formulae
\begin{equation}
\label{jostsln}
f(n,\varphi) = \begin{cases} e^{in\varphi} + o(1) & n\to -\infty \\
e^{i\psi}e^{in\varphi} + o(1) & n\to\infty  \end{cases} .
\end{equation}
The angle $\psi$ will usually depend on $\varphi$. Note also that
the spectral parameter $t=2\cos\varphi$ varies over $(0,2)$ if $\varphi\in (0,\pi/2)$.
The significance of \eqref{jostsln} will be discussed further in a moment, but we can
immediately make the clarifying remark that \eqref{jostsln}
will imply that $W^{(0)}\in\mathcal R(0,2)$.

Finally, we also demand that $W^{(0)}$ is \textit{not }reflectionless anywhere on $(-2,0)$.
More precisely, if
$A\subset (-2,0)$ has positive measure, then $W^{(0)}\notin\mathcal R(A)$.

It may seem that we are asking for a lot here,
but actually the existence of potentials $W^{(0)}$ with these properties is a rather easy consequence of
inverse scattering theory and we don't need anything close to the full force of this machinery here.
Just start out with a smooth (and let's say: real valued and even) reflection coefficient that is zero
precisely on $|\varphi|\le\pi/2$. The basic result that will do all the work here
is \cite[Theorem 10.12]{Teschl}. We actually obtain more precise information on $W^{(0)}$
from this, but what we have stated above will suffice for our purposes here.

We don't want to enter a discussion of the technical details of inverse scattering theory,
so I will leave the matter at that. Let me just mention one illuminating fact. We need some notation.
If $a(n)-1$, $b(n)$ decay sufficiently rapidly ($a-1,b\in\ell_1(\Z)$ will suffice for the few simple
remarks I want to make here, but in inverse scattering theory, one typically needs stronger assumptions),
then the Jacobi equation \eqref{je3} with $\varphi\notin\Z\pi$ has \textit{Jost solutions,}
that is, solutions of the asymptotic form $f_{\pm} = e^{\pm in\varphi}+o(1)$ as $n\to\infty$.
These are linearly independent and thus form a basis of the solution space.
Moreover, there are of course other solutions satisfying similar formulae near $-\infty$.
In particular, we can take such a solution and expand it in terms of $f_{\pm}$. In other words, there
exist $T(\varphi),R(\varphi)\in\C$ so that the following formula describes the asymptotics
of a certain solution $f$:
\[
f(n,\varphi) = \begin{cases} e^{in\varphi} + o(1) & n\to -\infty \\
T(\varphi)^{-1}(e^{in\varphi}+R(\varphi)e^{-in\varphi}) + o(1) & n\to\infty  \end{cases} .
\]
The coefficients $T$, $R$ defined in this way are called the
\textit{transmission} and \textit{reflection coefficients,}
respectively. Constancy of the Wronskian implies that $|T|^2+|R|^2=1$, so we now see that \eqref{jostsln}
corresponds to the special case where $R\equiv 0$ on $\varphi\in(0,\pi/2)$.
\begin{Proposition}
\label{P7.1}
Suppose that $a-1,b\in\ell_1(\Z)$. Let
\[
A = \left\{ 2\cos\varphi : R(\varphi)=0 \right\} .
\]
Then $(a,b)$ is reflectionless precisely on $A$. In other words,
$(a,b)\in\mathcal R(A)$, but $(a,b)\notin\mathcal R(B)$ if $|B\setminus A|>0$.
\end{Proposition}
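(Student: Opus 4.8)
The plan is to push everything through the Jost solutions, for which the scattering relation displayed just above the statement is exactly the link to $R$. Since $a-1,b\in\ell_1(\Z)$, standard inverse scattering theory (see \cite[Chapter 10]{Teschl}) gives, for every $t=2\cos\varphi\in(-2,2)$ with $\varphi\in(0,\pi)$, Jost solutions $\psi_{\pm}(n,\varphi)$ of \eqref{je} at energy $z=t$ with $\psi_+(n,\varphi)=e^{in\varphi}+o(1)$ as $n\to+\infty$ and $\psi_-(n,\varphi)=e^{-in\varphi}+o(1)$ as $n\to-\infty$; these are precisely the boundary values (as $z\to t+i0$ from $\C^+$) of the Weyl solutions $f_{\pm}$ that enter \eqref{defm}, so that with $n=0$,
\[
m_+(t)=-\frac{\psi_+(1,\varphi)}{a(0)\psi_+(0,\varphi)},\qquad
m_-(t)=\frac{\psi_-(1,\varphi)}{a(0)\psi_-(0,\varphi)}
\]
for almost every such $t$. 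Because the coefficients and $\varphi$ are real, $\overline{\psi_{\pm}}$ are again solutions, and for $\varphi\in(0,\pi)$ the pair $\{\psi_+,\overline{\psi_+}\}$ is a basis of the two-dimensional solution space (its discrete Wronskian equals $-2i\sin\varphi\neq0$).

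Next I would identify the solution $u$ appearing in the scattering formula above the Proposition, namely the one with $u(n)=e^{in\varphi}+o(1)$ as $n\to-\infty$: by uniqueness of Jost-type asymptotics under $\ell_1$ decay, $u=\overline{\psi_-}$. Matching its $n\to+\infty$ asymptotics against the basis $\{\psi_+,\overline{\psi_+}\}$ turns the scattering formula into the identity of solutions
\[
\overline{\psi_-}=T(\varphi)^{-1}\psi_+ + T(\varphi)^{-1}R(\varphi)\,\overline{\psi_+}.
\]
Now a one-line computation does the rest: after cancelling the common real factor and conjugating, the reflectionless condition \eqref{1.5} at $t=2\cos\varphi$, i.e. $m_+(t)=-\overline{m_-(t)}$, is equivalent to $\psi_+(1,\varphi)/\psi_+(0,\varphi)=\overline{\psi_-(1,\varphi)}/\overline{\psi_-(0,\varphi)}$, hence to the linear dependence of $\psi_+$ and $\overline{\psi_-}$ (two solutions with the same ratio at $n=0,1$ are proportional). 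By the displayed identity and the linear independence of $\{\psi_+,\overline{\psi_+}\}$, this happens if and only if $T(\varphi)^{-1}R(\varphi)=0$, i.e. (since $T(\varphi)\neq0$ throughout $(0,\pi)$) if and only if $R(\varphi)=0$.

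To conclude I would transport this equivalence through the diffeomorphism $\varphi\mapsto2\cos\varphi$ of $(0,\pi)$ onto $(-2,2)$, which preserves Lebesgue-null sets in both directions: \eqref{1.5} holds at a.e. $t\in A$, so $(a,b)\in\mathcal R(A)$; and if $|B\setminus A|>0$ then $R(\varphi)\neq0$ on a positive-measure set of $\varphi$ with $2\cos\varphi\in B\setminus A$, where \eqref{1.5} fails, so $(a,b)\notin\mathcal R(B)$. The only genuinely non-routine point — the main obstacle — is bookkeeping the conventions: correctly identifying the Weyl solutions $f_{\pm}$ of \eqref{defm} with the boundary values of the Jost solutions $\psi_{\pm}$, fixing the branch $\varphi\in(0,\pi)$ consistently with $z\in\C^+$, and using the almost-everywhere existence of nonzero complex boundary values of the Herglotz functions $m_{\pm}$ to make the ratio arguments and the exceptional-set arithmetic legitimate. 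One should also invoke the standard fact (for $\ell_1$ perturbations) that $T(\varphi)\neq0$ on all of $(0,\pi)$, which is what makes the final equivalence with $R(\varphi)=0$ clean.
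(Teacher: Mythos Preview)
Your proof is correct and follows exactly the route the paper sketches: the paper only says ``This follows quickly from the observation that the Jost solutions $f_{\pm}$ are the boundary values (as $z\to t\in\R$) of the $\ell_2$ solutions $f_{\pm}(\cdot, z)$ defined earlier'' and explicitly declines to give details. You have supplied precisely those details --- identifying the Weyl solutions with the Jost solutions, rewriting the reflectionless condition $m_+(t)=-\overline{m_-(t)}$ as linear dependence of $\psi_+$ and $\overline{\psi_-}$, and reading off $R(\varphi)=0$ from the scattering relation --- and your bookkeeping caveats (branch of $\varphi$, $T(\varphi)\neq 0$, a.e.\ existence of boundary values) are the right ones to flag.
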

This follows quickly from the observation that the Jost solutions $f_{\pm}$ are
the boundary values (as $z\to t\in\R$) of the $\ell_2$ solutions $f_{\pm}(\cdot, z)$
defined earlier (see Sect.~3). We will not provide any details here.

The Proposition finally explains the terminology: at least in a scattering situation,
a potential is reflectionless
precisely on the set on which the reflection coefficient vanishes.
It is now also clear that a potential $W^{(0)}$ with the properties given above might be relevant for
the issues we are interested in here. Let us now take a closer look at such an example. Basically,
we will just assemble $V$ from repeated copies of $W^{(0)}$, but since $W^{(0)}$ is not compactly supported,
we will also need cut-offs.
\begin{Theorem}
\label{T7.1}
Let
\[
V(n)=(a(n),b(n)) = \begin{cases}  W^{(0)}(n-c_j) & c_j-L_j\le n \le c_j+L_j \\
(1,0) & \textrm{\rm otherwise} \end{cases} .
\]
If $L_j$ increases sufficiently rapidly and the $c_j$ also increase so fast that the
intervals $\{ c_j-L_j, \ldots , c_j + L_j \}$ are disjoint, then the (half line) Jacobi matrix with
coefficients $V$ satisfies
\[
\Sigma_{ac}(V) = [0,2] .
\]
On the other hand,
\[
\omega(V) = \left\{ S^nW^{(0)} : n\in\Z \right\} \cup \left\{ W\equiv (1,0) \right\} ,
\]
so, in particular,
\[
\Sigma_{ac}(W_{\pm}) = [-2,2] \quad\text{for all }\: W\in\omega(V) .
\]
Moreover, if $W\in\omega(V)$, $W\not= (1,0)$, then $W\notin\mathcal R(\Sigma_{ac}(W_{\pm}))$.
\end{Theorem}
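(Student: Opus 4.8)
\emph{Overall plan.} Everything reduces to identifying $\omega(V)$; once that is known, three of the four assertions follow soft-ly from Theorem \ref{TBP} and Proposition \ref{P4.1}, and only the inclusion $\Sigma_{ac}(V)\supseteq[0,2]$ requires real analysis. So the first step is to compute $\omega(V)$. Because the cut-off copies occupy the disjoint intervals $\{c_j-L_j,\dots,c_j+L_j\}$ with $L_j\to\infty$, I would classify a large index $n$ by its position relative to the nearest copy. If $n=c_j+m$ with $m$ fixed and $j\to\infty$, then for each fixed $k$ one eventually has $(S^nV)(k)=W^{(0)}(m+k)$, hence $S^nV\to S^mW^{(0)}$; letting $m$ run through $\Z$ shows $\{S^mW^{(0)}:m\in\Z\}\subset\omega(V)$. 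In every other case — $n$ in a free gap with both distances to the neighbouring copies tending to $\infty$, or $n=c_j+r_j$ with $|r_j|\le L_j$ and $r_j\to\pm\infty$, or $n$ near an edge of a copy — one uses $a_0(n)\to1$, $b_0(n)\to0$ to see that every fixed coordinate of $S^nV$ tends to $(1,0)$, so the only further possible limit is $W\equiv(1,0)$ (realized e.g.\ along $n_j=c_j+\lfloor L_j/2\rfloor$). A routine subsequence argument then yields $\omega(V)=\{S^mW^{(0)}:m\in\Z\}\cup\{(1,0)\}$, which is automatically closed since $S^mW^{(0)}\to(1,0)$ as $m\to\pm\infty$.

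\emph{The soft consequences.} From $W^{(0)}\in\omega(V)$ and Theorem \ref{TBP} we get $W^{(0)}\in\mathcal R(\Sigma_{ac}(V))$. By hypothesis $W^{(0)}$ is reflectionless on no positive-measure subset of $(-2,0)$, and by Proposition \ref{P4.1}(b) it cannot be reflectionless on a positive-measure subset of $\R\setminus[-2,2]$ either (that would force $\Sigma_{ac}(W^{(0)}_\pm)=[-2,2]$ to contain it); hence $|\Sigma_{ac}(V)\setminus[0,2]|=0$. Next, for $W=S^mW^{(0)}\in\omega(V)$ the half-line restriction $W_+$ is obtained from $W^{(0)}_+$ by deleting or adjoining finitely many sites, a finite-rank perturbation, so $\Sigma_{ac}(W_\pm)=\Sigma_{ac}(W^{(0)}_\pm)=[-2,2]$ by assumption; the same holds for $W_-$. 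Finally, by Proposition \ref{P4.1}(a), $S^mW^{(0)}\in\mathcal R([-2,2])$ iff $W^{(0)}\in\mathcal R([-2,2])$, and the latter fails: since $\mathcal R(\cdot)$ is monotone in the set (\eqref{1.5} holds a.e.\ on any subset), $W^{(0)}\in\mathcal R([-2,2])$ would give $W^{(0)}\in\mathcal R((-2,0))$, contradicting the defining property of $W^{(0)}$. Thus $W\notin\mathcal R(\Sigma_{ac}(W_\pm))$ for every $W\in\omega(V)$ with $W\neq(1,0)$, which is the ``moreover'' assertion; the Last--Simon inclusion and \eqref{1.1} are consistent with all of this but not needed.

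\emph{The hard inclusion $\Sigma_{ac}(V)\supseteq[0,2]$ — the main obstacle.} This is where the precise growth of $L_j$ enters. For $E=2\cos\varphi\in(0,2)$, the Jost asymptotics \eqref{jostsln} show that the transfer matrix of $W^{(0)}$ across $\{-N,\dots,N\}$ converges, as $N\to\infty$, to a matrix which in the plane-wave basis is the rotation $\mathrm{diag}(e^{i\psi},e^{-i\psi})$; in particular it is bounded. The transfer matrix of the cut-off potential $V$ up to a point in the $k$-th free gap is a product of $k$ such (truncated) blocks interleaved with free elliptic transfer matrices, each truncated block differing from its reflectionless limit by an error governed by the tails of $(a_0-1,b_0)$ beyond $\pm L_j$. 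Choosing the $L_j$ to grow fast enough makes these errors summable, and a bookkeeping estimate controlling the accumulated conjugating matrices should yield that $\|T_V(0,n;E)\|$ stays bounded in $n$, locally uniformly for $E\in(0,2)$. By the standard consequences of bounded transfer matrices (subordinacy theory / Last--Simon; compare the related analysis in \cite{Mol}) this forces $(0,2)\subseteq\Sigma_{ac}(V)$, and together with the previous paragraph $\Sigma_{ac}(V)=[0,2]$. I expect the delicate point to be exactly this last estimate: verifying that the product of nearly-elliptic matrices does not accumulate norm, so that a summable perturbation of the ``perfect'' reflectionless situation still keeps all transfer matrices bounded.
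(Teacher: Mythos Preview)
Your outline matches the paper's proof closely: the paper, too, says that only $\Sigma_{ac}(V)\supset[0,2]$ needs serious work, derives $\omega(V)$ from $W^{(0)}(n)\to(1,0)$, gets $\Sigma_{ac}(V)\subset[0,2]$ from Theorem~\ref{TBP}, and handles the remaining assertions exactly as you do. Your identification of the obstacle is also correct --- the hard part is indeed showing that the transfer matrices across the cut-off blocks are close to rotations (after passing to Pr\"ufer-type variables), so that the product stays bounded.

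One technical point where the paper differs from your sketch and which you should be aware of: you aim for $\|T_V(0,n;E)\|$ bounded \emph{locally uniformly} in $E\in(0,2)$, but the error term $S(L,\varphi)$ in the ``transfer matrix $\approx$ rotation'' estimate is initially only known to satisfy $\|S(L,\varphi)\|\to 0$ \emph{pointwise} in $\varphi$, not uniformly. The paper sidesteps this by a Borel--Cantelli argument: using dominated convergence, one chooses $L_n$ so large that $|\{\varphi:\|S(L_n,\varphi)\|\ge 2^{-n}\}|<2^{-n}$, and then for almost every $\varphi$ one eventually has $\|S(L_n,\varphi)\|<2^{-n}$, which gives $R_n(\varphi)\le(1+C_\varphi 2^{-n})R_{n-1}(\varphi)$ and hence boundedness of the Pr\"ufer radius for a.e.\ $\varphi$. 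This a.e.\ boundedness already suffices for $[0,2]\subset\Sigma_{ac}$ by the standard criterion (the paper cites \cite[Proposition~2.1]{Remac}). The paper remarks afterwards that uniform estimates on compact subintervals of $(0,\pi/2)$ are also attainable with more work (and would give pure a.c.\ spectrum on $(0,2)$), so your route is viable too --- but the Borel--Cantelli shortcut is what the paper actually does, and it avoids the ``accumulated conjugating matrices'' bookkeeping you were worried about, since rotations have norm one and a.e.\ summable perturbations of rotations keep the product bounded without any conjugation issues.
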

This shows, first of all, that one needn't bother with trying to make sense out of
\eqref{7.1} in general since it is wrong anyway. The final statement seems more
important still: While we of course always have that $W\in\mathcal R(\Sigma_{ac}(V))$,
as asserted by Theorem \ref{TBP}, it is \textit{not }true in general that $W\in\omega(V)$
is also reflectionless on the possibly larger sets $\Sigma_{ac}(W_{\pm})$.
\begin{proof}
Only the inclusion
\begin{equation}
\label{7.2}
\Sigma_{ac}(V)\supset [0,2]
\end{equation}
needs serious proof; everything
else then falls into place very quickly. Indeed, to determine $\omega(V)$, it suffices
to recall that $W^{(0)}=(a_0,b_0)\to (1,0)$ as $|n|\to\infty$. Since $W^{(0)}$ is not
reflectionless anywhere on the complement of $[0,2]$, it follows from Theorem \ref{TBP} that
$\Sigma_{ac}(V)\subset [0,2]$.

So let us now prove \eqref{7.2}. The crucial observation is the following: the condition
of the reflection coefficient being zero forces the transfer matrix to be close to a rotation
asymptotically. Let us make this more precise. We again assume that $\varphi\notin\Z\pi$.
Given a solution $y$ of \eqref{je3}, we then introduce
\[
Y(n) = \begin{pmatrix} \sin\varphi & 0 \\ -\cos\varphi & 1 \end{pmatrix}
\begin{pmatrix} y(n-1) \\ y(n) \end{pmatrix} .
\]
This may look somewhat arbitrary at first sight but is actually a natural thing to do
because length and direction of $Y$ are the familiar Pr\"ufer variables. Compare \cite{KLS,KruRem}.
Let $T(L,\varphi)\in\R^{2\times 2}$ be the matrix that moves $Y(n)$ from $n=-L$ to $n=L$, that is,
$TY(-L)=Y(L)$.

To illustrate the basic mechanism in a situation that is as simple as possible,
suppose for a moment that we had a solution of the type \eqref{jostsln}, but with no errors
(so, formally, $o(1)=0$ in \eqref{jostsln}). This is a fictitious situation because the
other properties of $W^{(0)}$ would then become contradictory,
but let us not worry about this now; it will be easy to incorporate the error terms afterwards.
The transfer matrix $T(L,\varphi)$ then would have to satisfy
\[
e^{-i(L+1)\varphi} T(L,\varphi) \begin{pmatrix} \sin\varphi & 0 \\ -\cos\varphi & 1 \end{pmatrix}
\begin{pmatrix} 1 \\ e^{i\varphi} \end{pmatrix} =
e^{i\psi} e^{i(L-1)\varphi} \begin{pmatrix} \sin\varphi & 0 \\ -\cos\varphi & 1 \end{pmatrix}
\begin{pmatrix} 1 \\ e^{i\varphi} \end{pmatrix} ,
\]
or, equivalently,
\begin{equation}
\label{7.5}
T(L,\varphi) \begin{pmatrix} 1 \\ i \end{pmatrix} = e^{i\psi} e^{2iL\varphi}
\begin{pmatrix} 1 \\ i \end{pmatrix} .
\end{equation}
Recall that the entries of $T$ are real. Thus \eqref{7.5} implies that
\[
T(L,\varphi) = \begin{pmatrix} \cos\theta & \sin\theta \\ -\sin\theta & \cos\theta \end{pmatrix} ,
\quad\quad \theta \equiv \psi + 2L\varphi .
\]
An analogous calculation still works if we keep the error terms $o(1)$;
it now follows that if \eqref{jostsln} holds, then
\begin{equation}
\label{7.8}
T(L,\varphi) = \begin{pmatrix} \cos\theta & \sin\theta \\ -\sin\theta & \cos\theta \end{pmatrix}
+ S(L,\varphi), \quad\quad
\lim_{L\to\infty}\|S(L,\varphi)\| = 0 .
\end{equation}

Thus, by dominated convergence, we can now pick $L_n\in\N$ so large that
\[
\left| \left\{ \varphi\in(0,\pi/2): \|S(L_n,\varphi)\|\ge 2^{-n} \right\} \right| < 2^{-n} .
\]
We may further demand that
\begin{equation}
\label{7.9}
|a_0(k)-1| , \: |b_0(k)| < 2^{-n} \quad\quad \textrm{ for } |k| \ge L_n .
\end{equation}
The Borel-Cantelli Lemma guarantees that for almost every $\varphi\in (0,\pi/2)$, we will
eventually have that
\[
\|S(L_n,\varphi)\| < 2^{-n} \quad\quad (n\ge n_0=n_0(\varphi)) .
\]
Now define $V$ as described in the statement of the theorem, let $Y$ correspond to
a solution of the Jacobi equation with these coefficients, and put
$R_n(\varphi)=\|Y(c_n+L_n,\varphi)\|$. Recall also that the transfer matrix over
an interval is \textit{exactly }a rotation if $V=(1,0)$ on that interval.
Finally, \eqref{7.9} makes sure that the one-step transfer matrices at the gluing points
$c_n\pm L_n$ also differ from a rotation only by a correction of order $O(2^{-n})$.

Thus, putting things together, we find
that for almost every $\varphi\in(0,\pi/2)$, we
have estimates of the form
\[
R_n(\varphi) \le \left( 1+C_{\varphi} 2^{-n} \right) R_{n-1}(\varphi)
\]
for all $n\ge n_0(\varphi)$. It follows that
\[
\limsup_{n\to\infty} R_n(\varphi) < \infty
\]
for almost every $\varphi\in (0,\pi/2)$, and this implies \eqref{7.2} by
\cite[Proposition 2.1]{Remac}.
\end{proof}
With a more careful analysis, one can also establish that \eqref{7.8} holds
uniformly on $\varphi\in [\epsilon, \pi/2-\epsilon]$ and this lets one show
that the spectrum is actually \textit{purely }absolutely continuous on $(0,2)$,
by using a criterion like \cite[Theorem 1.3]{LS}. However, this improvement does
not seem to be of particular interest here, so we have taken an armchair approach instead
and given preference to the technically lighter treatment.
\begin{appendix}
\section{Proof of Theorem \ref{TBPorig}}
The two key notions in this proof are (pseudo)hyperbolic distance and harmonic measure on $\C^+$.
We quickly summarize the basic facts that are needed in the sequel. See \cite{BP1,BP2} for
a more detailed treatment and also \cite{Krantz,Siegel} for background information.

As in \cite{BP1,BP2}, we define the pseudohyperbolic distance of two points $w,z\in\C^+$ as
\begin{equation}
\label{gamma}
\gamma(w,z) = \frac{|w-z|}{\sqrt{\textrm{Im }w}\sqrt{\textrm{Im }z}} .
\end{equation}
This is perhaps not the most commonly used formula and it doesn't satisfy the
triangle inequality, but it is well adapted to our needs here. See
\cite[Proposition 1]{BP2} for the relation of $\gamma$ to hyperbolic distance.
Note that \cite[Proposition 1]{BP2} in particular says that $\gamma$ is an increasing function of
hyperbolic distance. As a consequence,
holomorphic maps $F:\C^+\to \C^+$ are distance decreasing: $\gamma(F(w),F(z))\le \gamma(w,z)$
if $F\in\mathcal H$. In particular, for automorphisms $F\in\textrm{Aut}(\C^+)$, we have equality here.
Recall also
that these are precisely the linear fractional transformations
\[
z \mapsto \frac{az+b}{cz+d}
\]
with $a,b,c,d\in\R$, $ad-bc>0$. It will be convenient to use matrix notation for (general)
linear fractional transformations. That is, if $a,b,c,d\in\C$, $ad-bc\not=0$, and $z\in\C$,
we will define
\begin{equation}
\label{lft}
S z = \frac{az+b}{cz+d},\quad\quad
S\equiv \begin{pmatrix} a & b \\ c & d \end{pmatrix} .
\end{equation}
This is best thought of as the matrix $S$ acting in the usual way on the vector
$(z,1)^t=[z:1]$ of the homogeneous coordinates of $z=[z:1]\in\C\subset\C\bbP^1$. The image
vector $S(z,1)^t$ records the homogeneous coordinates of the image point under the linear
fractional transformation. In particular, \eqref{lft} then describes
the action of $S$ on the Riemann sphere $\C_{\infty}\cong\C\bbP^1$.

Hyperbolic distance and harmonic measure are intimately related: If $w,z\in\C^+$ and
$S\subset\R$ is an arbitrary Borel set, then
\begin{equation}
\label{A.1}
\left| \omega_w(S) - \omega_z(S) \right| \le \gamma(w,z)
\end{equation}
(this will suffice for our purposes here, but
see also \cite[Proposition 2]{BP2} for an interesting stronger statement).
To prove \eqref{A.1}, fix $S\subset\R$ with $|S|>0$,
and recall that $z\mapsto \omega_z(S)$ is a positive harmonic
function on $\C^+$. This function has a harmonic conjugate $\alpha(z)$; in other words,
$F(z)=\alpha(z)+i\omega_z(S)\in\mathcal H$, and by the distance decreasing property of such functions,
we obtain that
\[
\gamma(w,z)\ge \gamma(F(w),F(z)) \ge
\frac{\left| \omega_w(S) - \omega_z(S) \right| }{\sqrt{\omega_w(S)}\sqrt{\omega_z(S)}}
\ge \left| \omega_w(S) - \omega_z(S) \right| .
\]
\begin{Lemma}
\label{LA.1}
Let $A\subset\R$ be a Borel set with $|A|<\infty$. Then
\[
\lim_{y\to 0+} \sup_{F\in\mathcal H; S\subset\R} \left|
\int_A \omega_{F(t+iy)}(S)\, dt - \int_A \omega_{F(t)}(S)\, dt \right| = 0 .
\]
\end{Lemma}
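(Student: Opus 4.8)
The plan is to interchange the order of integration so as to remove $F$ and $S$ from the estimate altogether, reducing the whole statement to a single $L^1$ approximate-identity fact about the set $A$. The tool just recorded in \eqref{A.1} gives the pointwise bound $\left|\omega_{F(t+iy)}(S) - \omega_{F(t)}(S)\right| \le \gamma(F(t+iy),F(t))$ at points where $\textrm{Im }F(t)>0$, but controlling the integral of $\gamma(F(t+iy),F(t))$ uniformly in $F$ is awkward, so the route below sidesteps it.

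First I would fix $F\in\mathcal H$ and a Borel set $S\subset\R$ and consider $h(z)=\omega_{F(z)}(S)$ on $\C^+$. Since $w\mapsto\omega_w(S)$ is harmonic with values in $[0,1]$ and $F$ is holomorphic, $h$ is harmonic on $\C^+$ with $0\le h\le 1$. By the Fatou representation of bounded harmonic functions on the half plane, $h$ has a nontangential boundary value a.e.\ --- which by the very definition of $\omega_{F(t)}(S)$ given above equals $\omega_{F(t)}(S)$ --- and $h$ is the Poisson integral of this boundary function (the possible additive term $cy$ is absent because $h$ is bounded):
\[
h(t+iy) = \int_{\R} P_y(t-s)\,\omega_{F(s)}(S)\, ds , \qquad P_y(u) = \frac{1}{\pi}\,\frac{y}{u^2+y^2} .
\]

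Next I would integrate over $A$ and apply Fubini; this is legitimate because $P_y\ge 0$, $\int_{\R} P_y = 1$ and $0\le\omega_{F(s)}(S)\le 1$, so the double integral over $A\times\R$ is at most $|A|<\infty$. Using that $P_y$ is even, $\int_A P_y(t-s)\, dt = (P_y*\chi_A)(s)$, so
\[
\int_A \omega_{F(t+iy)}(S)\, dt = \int_{\R} \omega_{F(s)}(S)\,(P_y*\chi_A)(s)\, ds ,
\]
while $\int_A \omega_{F(t)}(S)\, dt = \int_{\R}\omega_{F(s)}(S)\,\chi_A(s)\, ds$. Subtracting and using $0\le\omega_{F(s)}(S)\le 1$ once more,
\[
\left| \int_A \omega_{F(t+iy)}(S)\, dt - \int_A \omega_{F(t)}(S)\, dt \right| \le \int_{\R}\big| (P_y*\chi_A)(s) - \chi_A(s)\big|\, ds = \| P_y*\chi_A - \chi_A \|_{L^1(\R)} .
\]
The right-hand side is independent of $F$ and $S$, hence bounds the supremum in the statement.

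Finally, since $|A|<\infty$ we have $\chi_A\in L^1(\R)$, and $\{P_y\}_{y>0}$ is an approximate identity, so $\| P_y*\chi_A - \chi_A \|_{L^1(\R)}\to 0$ as $y\to 0+$; this completes the proof. The only step requiring a little care is the Poisson representation of $h$ --- that the bounded harmonic function $\omega_{F(\cdot)}(S)$ carries no singular boundary contribution and no linear term --- but this is precisely the standard Fatou theorem for bounded harmonic functions on $\C^+$, with the linear term ruled out by boundedness; everything else is routine.
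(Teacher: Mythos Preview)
Your proof is correct and is essentially identical to the paper's: the key identity \eqref{A.2} there is exactly your Poisson representation $h(t+iy)=\int_\R P_y(t-s)\,\omega_{F(s)}(S)\,ds$ (since $d\omega_{t+iy}(s)=P_y(t-s)\,ds$), and after the same Fubini step both arguments bound the difference by a quantity depending only on $A$ and $y$ --- the paper's $\int_A\omega_{t+iy}(A^c)\,dt$ is just half of your $\|P_y*\chi_A-\chi_A\|_{L^1}$. The only cosmetic difference is that the paper verifies the approximate-identity convergence by hand via Lebesgue's differentiation theorem, whereas you cite it.
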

This stunning result is Theorem 1 of \cite{BP2}.

The point here is the \textit{uniform} convergence. For fixed $F$, the statement
follows immediately from Proposition \ref{P2.2}(c) and the obvious fact that $F(z+iy)\to F(z)$
locally uniformly as $y\to 0$.
\begin{proof}
This will follow from the neat identity
\begin{equation}
\label{A.2}
\omega_{F(z)}(S) = \int_{-\infty}^{\infty} \omega_{F(t)}(S)\, d\omega_z(t) ,
\end{equation}
which is valid for all Borel sets $S\subset\R$ and $z\in\C^+$. To prove \eqref{A.2},
it suffices to observe that both sides are bounded,
non-negative harmonic functions of $z\in\C^+$ with the same
boundary values $\omega_{F(t)}(S)$ for almost every $t\in\R$. Therefore, they must be identical.

Fubini's Theorem now shows that
\begin{align*}
\int_A \omega_{F(t+iy)}(S)\, dt & =
\int_A dt \int_{-\infty}^{\infty} d\omega_{t+iy}(u)\, \omega_{F(u)}(S) \\
& = \frac{1}{\pi} \int_A dt \int_{-\infty}^{\infty} du\, \frac{y}{(u-t)^2+y^2}\, \omega_{F(u)}(S) \\
& = \int_{-\infty}^{\infty} \omega_{u+iy}(A)\, \omega_{F(u)}(S)\,du .
\end{align*}
Therefore,
\begin{align*}
\left| \int_A \omega_{F(t+iy)}(S)\, dt - \int_A \omega_{F(t)}(S)\, dt \right| & =
\left| \int_{-\infty}^{\infty} \omega_{F(t)}(S)
\left( \omega_{t+iy}(A) - \chi_A(t) \right) \, dt \right| \\
& \le \max_{B=A, A^c} \int_B \omega_{t+iy}(B^c)\, dt = \int_A \omega_{t+iy}(A^c) \, dt .
\end{align*}
The inequality follows because $0\le \omega_{F(t)}(S)\le 1$ and the
second factor satisfies $\omega_{t+iy}(A) - \chi_A(t) \ge 0$ for $t\in A^c$ and it is $\le 0$
if $t\in A$, so by integrating over just one of these sets we only avoid cancellations.
If we then use the definition of $\omega_z$ and Fubini's Theorem, we see that the two integrals
from the maximum are equal to one another.

We have now estimated the difference from the statement of Lemma \ref{LA.1} by
\[
\epsilon_A(y):=\int_A \omega_{t+iy}(A^c) \, dt ,
\]
a quantity that is independent of both $F$ and $S$. To show that $\epsilon_A(y)\to 0$ as $y\to 0+$,
recall that by Lebesgue's differentiation theorem, we have that $|A^c\cap (t-h,t+h)|=o(h)$ for almost
all $t\in A$. For such a $t$, we obtain that
\begin{align*}
\omega_{t+iy}(A^c) & \le \frac{1}{\pi}\int_{A^c\cap(t-Ny,t+Ny)} \frac{y}{(s-t)^2+y^2}\, ds +
\frac{1}{\pi}\int_{|s-t|\ge Ny} \frac{y}{(s-t)^2+y^2}\, ds \\
& = No(1) + 1 - \frac{2}{\pi}\arctan N
\end{align*}
as $y\to 0+$. By taking $y$ small enough and noting that $N>0$ is arbitrary, we see that
$\omega_{t+iy}(A^c)\to 0$ for almost all $t\in A$, and thus indeed
$\epsilon_A(y)\to 0$ by dominated convergence.
\end{proof}
We are interested in the asymptotic behavior of the $m$ functions $m_{\pm}(n,z)$, as $n\to\infty$.
We recall the definitions. For $z\in\C^+$, let $f_{\pm}(n,z)$ be the solutions of
\begin{equation}
\label{je2}
a(n)f(n+1)+a(n-1)f(n-1)+b(n)f(n)=zf(n)
\end{equation}
satisfying $a(0)f_-(0,z)=0$ and $f_+(\cdot,z)\in\ell_2(\Z_+)$, respectively. These are unique
up to constant factors. Then
\[
m_{\pm}(n,z)=\mp \frac{f_{\pm}(n+1,z)}{a(n)f_{\pm}(n,z)} ,
\]
From \eqref{je2}, we can easily extract the matrices $T_{\pm}$ that describe the evolution of the
vectors $(f(n+1), \mp a(n)f(n))^t$. Moreover, the components of these vectors are homogeneous
coordinates of the numbers $m_{\pm}(n,z)$. We thus find that
\begin{equation}
\label{ric}
m_{\pm}(n,z) = T_{\pm}(a(n),b(n),z) m_{\pm}(n-1,z) ,\quad\quad
T_{\pm}(a,b,z) \equiv \begin{pmatrix} \frac{z-b}{a} & \pm \frac{1}{a} \\ \mp a & 0 \end{pmatrix} .
\end{equation}
Here, we use the matrix notation for linear fractional transformations, as introduced in \eqref{lft}.
Of course, if written out, \eqref{ric} gives us the familiar Riccati equations for $m_{\pm}$
(see, for example, \cite[Eqns (2.11), (2.13)]{Teschl}).

We will use the abbreviations $m_+(z)\equiv m_+(0,z)$ and $T_{\pm}(n,z)\equiv T_{\pm}(a(n),b(n),z)$,
and we also introduce
\[
P_{\pm}(n,z) := T_{\pm}(n,z)T_{\pm}(n-1,z) \cdots T_{\pm}(1,z) .
\]
By iterating \eqref{ric} (and noting that $m_-(0,z)=\infty$), we then obtain that
\begin{equation}
\label{A.3}
m_+(n,z) = P_+(n,z)m_+(z), \quad\quad m_-(n,z) = P_-(n,z)\infty .
\end{equation}

We also observe the following properties of the linear fractional transformations $T_{\pm}$:
First of all, if $z\in\R$ (and $a>0$, $b\in\R$),
then $T_{\pm}(a,b,z)\in\textrm{Aut}(\C^+)$, the automorphisms of $\C^+$.
If $z\in\C^+$, then $T_-(a,b,z)\in\mathcal H$, while $T_+$ does \textit{not} map $\C^+$ to
itself then.

Let us now return to the proof of Theorem \ref{TBPorig}. Suppose we did not know that
$m_-(0,z)=\infty$, but only that $m_-(0,z)\in\overline{\C^+}$.
The above remarks together with \eqref{A.3} make
it clear that then the hyperbolic diameter of the
set of possible values of $m_-(n,z)$ decreases as $n\to\infty$.
The following Lemma and especially Corollary \ref{CA.1} make this more precise.
\begin{Lemma}
\label{LA.2}
Let $a>0$, $b\in\R$, $z\in\C$ with $y\equiv\text{\rm Im }z\ge 0$. Suppose that
\[
w_j = T_-(a_0,b_0,z)\zeta_j, \quad\quad \zeta_j\in\C^+, \quad a_0>0 .
\]
Then
\[
\gamma\left( T_-(a,b,z)w_1, T_-(a,b,z)w_2 \right) \le \frac{1}{1+(y/a_0)^2} \,
\gamma(w_1,w_2) .
\]
\end{Lemma}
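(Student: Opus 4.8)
The plan is to work entirely with the matrix description \eqref{lft} of the linear fractional transformation $T_-$ and to exploit the single algebraic fact that $\det T_-(a,b,z)=1$. First I would record how $T_-(a,b,z)$, with $y\equiv\textrm{Im }z\ge 0$ and $a>0$, acts on a point $w\in\C^+$. From \eqref{ric} one has $T_-(a,b,z)w=\frac{1}{a^2}\bigl((z-b)-\frac{1}{w}\bigr)$, and two elementary identities drop out of this. The difference identity
\[
T_-(a,b,z)w_1-T_-(a,b,z)w_2=\frac{w_1-w_2}{a^2\,w_1w_2}
\]
is just the $\det=1$ computation (the general statement being $Sw_1-Sw_2=(\det S)(w_1-w_2)/\bigl((cw_1+d)(cw_2+d)\bigr)$ for a $2\times 2$ matrix $S$ with bottom row $(c,d)$, here $(c,d)=(a,0)$). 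The imaginary-part identity
\[
\textrm{Im}\,T_-(a,b,z)w=\frac{1}{a^2}\Bigl(y+\frac{\textrm{Im }w}{|w|^2}\Bigr)
\]
follows from the same expression for $T_-(a,b,z)w$ together with $\textrm{Im}(1/w)=-\textrm{Im }w/|w|^2$. In particular $T_-(a,b,z)$ maps $\C^+$ into $\C^+$, so all the pseudohyperbolic distances below are defined.

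Next I would substitute these two identities directly into the definition \eqref{gamma} of $\gamma$. The powers of $a^2$ cancel, and after pairing $|w_j|^2$ with the imaginary part one obtains the exact formula
\[
\gamma\bigl(T_-(a,b,z)w_1,T_-(a,b,z)w_2\bigr)=\frac{|w_1-w_2|}{\sqrt{y|w_1|^2+\textrm{Im }w_1}\,\sqrt{y|w_2|^2+\textrm{Im }w_2}}=\gamma(w_1,w_2)\prod_{j=1,2}\Bigl(1+\frac{y|w_j|^2}{\textrm{Im }w_j}\Bigr)^{-1/2}.
\]
Thus the claimed inequality reduces to showing that each factor in the product is at most $(1+(y/a_0)^2)^{-1/2}$, i.e.\ that $y|w_j|^2/\textrm{Im }w_j\ge(y/a_0)^2$ for $j=1,2$.

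This last reduction is the only point that takes any thought, and it is where the hypothesis $w_j=T_-(a_0,b_0,z)\zeta_j$, $\zeta_j\in\C^+$, enters; the key observation is that only a \emph{lower bound} on $\textrm{Im }w_j$ is needed. Applying the imaginary-part identity above with $(a_0,b_0)$ in place of $(a,b)$ gives $\textrm{Im }w_j=\frac{1}{a_0^2}\bigl(y+\textrm{Im }\zeta_j/|\zeta_j|^2\bigr)\ge y/a_0^2$. Combining this with the trivial bound $|w_j|^2\ge(\textrm{Im }w_j)^2$ yields
\[
\frac{y|w_j|^2}{\textrm{Im }w_j}\ge y\,\textrm{Im }w_j\ge\frac{y^2}{a_0^2},
\]
which is exactly what was required; substituting back into the product formula of the previous paragraph gives $\gamma(T_-(a,b,z)w_1,T_-(a,b,z)w_2)\le(1+(y/a_0)^2)^{-1}\gamma(w_1,w_2)$. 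The remaining items are pure bookkeeping: $w_j\ne\infty$ because $\zeta_j\ne0$, and $\textrm{Im }w_j>0$ strictly (and likewise for the images under $T_-(a,b,z)$) by the same identity, so every $\gamma$ written above is genuinely well defined.
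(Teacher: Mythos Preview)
Your proof is correct and is essentially the same as the paper's: both compute the exact contraction ratio of $T_-(a,b,z)$ with respect to $\gamma$ and then bound it using $\textrm{Im }w_j\ge y/a_0^2$ together with $|w_j|\ge\textrm{Im }w_j$. The only cosmetic difference is that the paper first factors $T_-(a,b,z)=A\,S(z)\,J$ with $A,J\in\textrm{Aut}(\C^+)$ and $S(z):u\mapsto u+z$, so that only the translation needs to be estimated (on $u_j=Jw_j=-1/w_j$), whereas you carry out the equivalent direct computation on $w_j$; the resulting inequality $y|w_j|^2/\textrm{Im }w_j\ge (y/a_0)^2$ is exactly the paper's bound $\textrm{Im }u_j\le a_0^2/y$ rewritten.
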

\begin{Corollary}
\label{CA.1}
Suppose that $a(n)\le A$, and let $K$ be a compact subset of $\C^+$. Then
\[
\lim_{n\to\infty} \gamma(m_-(n,z),P_-(n,z)w) = 0,
\]
uniformly in $z\in K$, $w\in\overline{\C^+}$.
In fact, $\gamma\le Cq^n$ for $n\ge 2$, where
we may take $q=1/(1+(\delta/A)^2)<1$ if $\textrm{\rm Im }z\ge\delta >0$ for all $z\in K$.
\end{Corollary}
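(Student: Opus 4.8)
The plan is to read off the Corollary from Lemma \ref{LA.2} by iterating the contraction, after handling the first one or two steps separately. Abbreviate $u_1(k)=m_-(k,z)=P_-(k,z)\infty$ (see \eqref{A.3}) and $u_2(k)=P_-(k,z)w$; by \eqref{ric} these obey the common recursion $u_j(k)=T_-(k,z)u_j(k-1)$ with initial data $u_1(0)=\infty$ and $u_2(0)=w\in\overline{\C^+}$. What is to be shown is then precisely that $\gamma\bigl(u_1(n),u_2(n)\bigr)\le Cq^n$ for $n\ge 2$, uniformly in $z\in K$ and $w\in\overline{\C^+}$, where $q=1/(1+(\delta/A)^2)$ and $\delta=\min_{z\in K}\operatorname{Im}z>0$.

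First I would isolate the one elementary computation that drives everything. Writing the linear fractional transformation \eqref{ric} in the form $\zeta\mapsto\frac{z-b}{a^2}-\frac{1}{a^2\zeta}$, one reads off
\[
\operatorname{Im}\bigl(T_-(a,b,z)\zeta\bigr)=\frac{\operatorname{Im}z}{a^2}+\frac{\operatorname{Im}\zeta}{a^2|\zeta|^2}\ \ge\ \frac{\operatorname{Im}z}{a^2}
\]
for every $\zeta\in\overline{\C^+}$ with $\zeta\ne 0$ (while $\zeta=\infty$ gives $(z-b)/a^2$ and $\zeta=0$ gives $\infty$). Hence, as soon as the argument fed to some $T_-(k,z)$ is a point of $\overline{\C^+}\cup\{\infty\}$ different from $0$, the output lies in $\C^+$ with imaginary part $\ge\delta/A^2$ (using $\operatorname{Im}z\ge\delta$, $a(k)\le A$). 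Running this along the orbits gives $u_j(k)\in\C^+$ with $\operatorname{Im}u_j(k)\ge\delta/A^2$ for all $k\ge1$, the sole exception being that for $w=0$ one has $u_2(1)=\infty$ and the assertion is valid only from $k\ge2$ on. In particular $|u_j(k)|\ge\operatorname{Im}u_j(k)\ge\delta/A^2$.

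Now for the iteration. Whenever $u_1(k-1),u_2(k-1)\in\C^+$, I may write $u_j(k)=T_-(a(k),b(k),z)u_j(k-1)$ with $u_j(k-1)\in\C^+$, which is exactly the hypothesis of Lemma \ref{LA.2} (with $a_0=a(k)\le A$); applying that lemma with the next transfer matrix $T_-(k+1,z)$ yields
\[
\gamma\bigl(u_1(k+1),u_2(k+1)\bigr)\le\frac{\gamma\bigl(u_1(k),u_2(k)\bigr)}{1+(\operatorname{Im}z/a(k))^2}\le q\,\gamma\bigl(u_1(k),u_2(k)\bigr).
\]
By the previous paragraph this is legitimate for every $k\ge2$ (and for every $k\ge3$ in the exceptional case $w=0$). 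Chaining these inequalities downward, and using in the exceptional case that $\gamma(u_1(3),u_2(3))\le\gamma(u_1(2),u_2(2))$ because a Herglotz map is $\gamma$-nonexpanding on $\C^+$, I am left with the single remaining task of bounding $\gamma\bigl(u_1(2),u_2(2)\bigr)$ by a constant depending only on $K$ and $A$.

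For that final bound the same identity does the job once more: it gives $\operatorname{Im}u_j(2)\ge\operatorname{Im}z/a(2)^2\ge\delta/a(2)^2$, and since $u_1(2)-u_2(2)=a(2)^{-2}\bigl(u_2(1)^{-1}-u_1(1)^{-1}\bigr)$ while $|u_j(1)|\ge\delta/A^2$ (with the convention $1/\infty=0$), also $|u_1(2)-u_2(2)|\le 2A^2/(a(2)^2\delta)$; forming the quotient \eqref{gamma}, the factors $a(2)^{-2}$ cancel, so $\gamma(u_1(2),u_2(2))\le 2A^2/\delta^2$, whence $C=2A^2/(\delta^2q^3)$ works. I expect this last uniform bound to be the crux: a priori $\gamma$ is infinite at the boundary, and $w$ is allowed to lie on $\R$, to equal $0$ (forcing a detour through $\infty$), or to sit in $\C^+$ with $|w|$ arbitrarily small — each of which makes $u_1(1)-u_2(1)$ large. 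The point is that one application of $T_-$ already pushes the orbit strictly into $\C^+$ with a uniform lower bound on the imaginary part, and that the $a(2)^{-2}$ which could inflate the numerator inflates the denominator by the same amount; everything past this transient is the automatic geometric decay furnished by Lemma \ref{LA.2}. The bookkeeping for the degenerate orbit $w=0\mapsto\infty$ — which is exactly why the estimate is stated for $n\ge2$ rather than $n\ge1$ — should be mentioned but costs nothing.
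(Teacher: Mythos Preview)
Your proof is correct and follows the same strategy as the paper: iterate Lemma~\ref{LA.2} for the geometric contraction and secure a uniform bound on $\gamma$ at an initial step. The paper's own argument is terser, replacing your explicit computation of $\gamma(u_1(2),u_2(2))\le 2A^2/\delta^2$ and the separate handling of $w=0$ by the single observation (from Weyl theory, or by inspection) that $\{P_-(n,z)w:w\in\overline{\C^+},\,z\in K\}$ is a compact subset of $\C^+$ for $n\ge 2$; your hands-on treatment yields the same conclusion with an explicit constant.
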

\begin{proof}
The Corollary is immediate from the Lemma if we also note that by Weyl theory (or inspection), the set
$\{ P_-(n,z)w: w\in\overline{\C^+}, z\in K \}$ is a compact subset of $\C^+$ for $n\ge 2$.

So it suffices to prove the Lemma. Now
\[
T\equiv T_-(a,b,z) = \begin{pmatrix} 1/a & -b/a \\ 0 & a \end{pmatrix}
\begin{pmatrix} 1 & z \\ 0 & 1 \end{pmatrix}
\begin{pmatrix} 0 & -1 \\ 1 & 0 \end{pmatrix} \equiv AS(z)J ,
\]
and $A,J\in\textrm{Aut}(\C^+)$, that is, $A$ and $J$ are isometries with respect to $\gamma$. Therefore, we
can put $u_j=Jw_j$ and we then have that $\gamma(Tw_1,Tw_2)=\gamma(Su_1,Su_2)$ and
$\gamma(w_1,w_2)=\gamma(u_1,u_2)$. Moreover, from the definition \eqref{gamma} it is immediate that
\begin{equation}
\label{A.4}
\frac{\gamma(S(z)u_1, S(z)u_2)}{\gamma(u_1,u_2)} = \frac{\gamma(u_1+z,u_2+z)}{\gamma(u_1,u_2)}
= \left( \frac{\textrm{Im }u_1}{y+\textrm{Im }u_1}\right)^{1/2}
\left( \frac{\textrm{Im }u_2}{y+\textrm{Im }u_2}\right)^{1/2} .
\end{equation}
The hypothesis on $w_j$ says that
\[
w_j = \frac{z-b_0}{a_0^2} + \frac{Z_j}{a_0^2} , \quad\quad Z_j=-1/\zeta_j \in\C^+ .
\]
Thus $\textrm{Im }w_j\ge y/a_0^2$, and since $u_j=-1/w_j$, it follows $\textrm{Im }u_j\le a_0^2/y$.
Therefore, the asserted estimate follows from \eqref{A.4}
\end{proof}
We now have all the tools to finish the
\begin{proof}[Proof of Theorem \ref{TBPorig}]
Let $A\subset\Sigma_{ac}$,
$|A|<\infty$, and let $\epsilon>0$ be given. As the first step of the proof, decompose $A=A_0\cup A_1 \cup
\ldots \cup A_N$. We pick these sets in such a way that
$m_+(t)\equiv\lim_{y\to 0+}m_+(t+iy)$ exists and $m_+(t)\in\C^+$
on $\bigcup_{j=1}^N A_j$. Moreover, we demand that there exist $m_j\in\C^+$ so that
\begin{equation}
\label{A.12}
\gamma(m_+(t),m_j)< \epsilon \quad\quad (t\in A_j , j=1, \ldots, N)
\end{equation}
and $|A_0|<\epsilon$. Finally, we require that $A_j$ is bounded for $j\ge 1$.

To find $A_j$'s with these properties, first of all put all $t\in A$ for which
$m_+(t)$ does not exist or does not lie in $\C^+$ into $A_0$
(so far, $|A_0|=0$). Then pick (sufficiently large)
compact subsets $K\subset \C^+$, $K'\subset\R$ so that
$A_0=\{ t\in A: m_+(t)\notin K \textrm{ or }t\notin K' \}$ satisfies $|A_0|<\epsilon$.
Subdivide $K$ into finitely many subsets of hyperbolic diameter less than $\epsilon$, then take
the inverse images under $m_+$ of these subsets, and finally intersect with $K'$ to obtain
the $A_j$ for $j\ge 1$.

It is then also true that $m_+(n,t)$ exists and lies
in $\C^+$ for arbitrary $n\in\Z_+$ if $t\in \bigcup_{j=1}^N A_j$.
Moreover, since $P_+(n,t)\in\textrm{Aut}(\C^+)$, we obtain from \eqref{A.3} and \eqref{A.12} that also
\begin{equation}
\label{A.5}
\gamma(m_+(n,t),P_+(n,t)m_j) < \epsilon \quad\quad (t\in A_j, j=1,\ldots, N)
\end{equation}
We may now use \eqref{A.1} and integrate to see that for arbitrary Borel sets $S\subset\R$,
\begin{equation}
\label{A.11}
\left| \int_{A_j} \omega_{m_+(n,t)}(S)\, dt - \int_{A_j}
\omega_{P_+(n,t)m_j}(S)\, dt \right| \le \epsilon |A_j| .
\end{equation}
Use the definition of harmonic measure (see \eqref{harmmeas}) to rewrite the second integrand as
\begin{equation}
\label{A.18}
\omega_{P_+(n,t)m_j}(S) = \omega_{-\overline{P_+(n,t)m_j}}(-S) .
\end{equation}
Moreover, and this seems to be one of the most important steps of the whole proof, we can further
manipulate this as follows:
\begin{equation}
\label{A.6}
\omega_{-\overline{P_+(n,t)m_j}}(-S) = \omega_{P_-(n,t)(-\overline{m_j})}(-S)
\end{equation}
To see this, just note that the linear fractional transformation that is multiplication by $-1$
corresponds to the matrix $\bigl( \begin{smallmatrix} 1 & 0 \\ 0 & -1 \end{smallmatrix} \bigr)$ and
\[
\begin{pmatrix} 1 & 0 \\ 0 & -1 \end{pmatrix} T_+(a,b,z) \begin{pmatrix} 1 & 0 \\ 0 & -1 \end{pmatrix}
= T_-(a,b,z) .
\]
This implies that we also have that
\begin{equation}
\label{Abasic}
\begin{pmatrix} 1 & 0 \\ 0 & -1 \end{pmatrix} P_+(n,z)
= P_-(n,z) \begin{pmatrix} 1 & 0 \\ 0 & -1 \end{pmatrix} ,
\end{equation}
and this (for $z=t$) gives \eqref{A.6}.
(This is the key calculation that was alluded to in Sect.~1.6; see \eqref{1.11}.)

Use Lemma \ref{LA.1} to find a $y>0$ so that
\begin{equation}
\label{A.8}
\left| \int_{A_j} \omega_{F(t+iy)}(-S)\, dt - \int_{A_j} \omega_{F(t)}(-S)\, dt \right| \le \epsilon |A_j|
\end{equation}
for all $F\in\mathcal H$, all Borel sets $S\subset\R$ and $j=1,\ldots, N$. By Corollary \ref{CA.1},
we can now find an $n_0\in\N$ so that
\[
\gamma\left( m_-(n,t+iy),P_-(n,t+iy)(-\overline{m_j})\right) < \epsilon
\]
for all $n\ge n_0$, $t\in A_j$, $j=1,\ldots,N$. Use \eqref{A.1} and integrate over $A_j$. This gives
\[
\left| \int_{A_j} \omega_{P_-(n,t+iy)(-\overline{m_j})}(-S)\, dt -
\int_{A_j} \omega_{m_-(n,t+iy)}(-S)\, dt \right| \le \epsilon |A_j| \quad\quad (n\ge n_0) .
\]
Two applications of \eqref{A.8} let us get rid of $y$ here. We obtain that
\[
\left| \int_{A_j} \omega_{P_-(n,t)(-\overline{m_j})}(-S)\, dt -
\int_{A_j} \omega_{m_-(n,t)}(-S)\, dt \right| \le 3\epsilon |A_j| \quad\quad (n\ge n_0).
\]
We combine this with \eqref{A.11}, \eqref{A.18}, \eqref{A.6}, then sum over $j=1,\ldots, N$
and finally recall
that $|A_0|<\epsilon$. It follows that
\[
\left| \int_A \omega_{m_+(n,t)}(S)\, dt -
\int_A \omega_{m_-(n,t)}(-S)\, dt \right| \le 4\epsilon |A| + 2 \epsilon
\]
if $n\ge n_0$.
\end{proof}
Let me try to summarize the argument. Consider $-\overline{m_+(n,t)}= -P_+(n,t)\overline{m_+(t)}$.
The crucial
identity \eqref{Abasic} shows that this latter expression equals $P_-(n,t)(-\overline{m_+(t)})$,
thus we also have that
\[
-\overline{m_+(n,t)} = P_-(n,t)(-\overline{m_+(t)}) .
\]
This already looks very similar to the reflectionless condition from Definition \ref{D1.1}, except
that $-\overline{m_+(t)}$ on the right-hand side
is not the correct initial value if we want to obtain $m_-(n,t)$ (that
would be $\infty$, as we saw in \eqref{A.3}). Fortunately, that doesn't
really matter, though, because of the focussing
property of the evolution of $m_-(n,z)$ that is expressed by Lemma \ref{LA.2} and Corollary \ref{CA.1}.
On second thoughts, things are not really that clear
because the evolution is focussing for $z\in\C^+$ and not for $z=t\in\R$.
However, Lemma \ref{LA.1} saves us then because it allows us to move into the upper half plane at low cost.

It is perhaps also illuminating to analyze why this proof doesn't prove too much (where do we need
that $A\subset\Sigma_{ac}$) and why the approximation of $m_+(t)$ by the step function with values
$m_j$ was necessary (it is actually not necessary if $m_+$ has a holomorphic continuation through
$A$ into the lower half plane). I will leave these points for the interested reader to explore.
\end{appendix}

\end{document}